\documentclass[10pt%,reqno
]{amsart}
\usepackage{graphicx, amssymb, amsmath, amsthm}
\numberwithin{equation}{section}
\usepackage[numbers]{natbib}
\usepackage{color}
\usepackage{amscd,hyperref}
\hypersetup{hidelinks}
\usepackage{geometry}
\usepackage{comment}
\usepackage{tikz}
\newcommand{\pa}{\partial}
\newtheorem{example}{Example}[section]
\newcommand{\norm}[1]{\left\|#1\right\|}
\newcommand{\Ree}{\operatorname{Re}}

\let\Re=\undefined\DeclareMathOperator*{\Re}{Re}
\let\Im=\undefined\DeclareMathOperator*{\Im}{Im}

\newcommand{\R}{\mathbb{R}}
\newcommand{\C}{\mathbb{C}}

\newcommand{\ip}[2]{\left\langle #1,#2\right\rangle}

\newcommand{\Z}{\mathbb{Z}}

\newtheorem{assumption}{Assumption}[section]

\newtheorem{theorem}{Theorem}[section]

\newtheorem{lemma}[theorem]{Lemma}

\newtheorem{corollary}[theorem]{Corollary}

\newtheorem{proposition}[theorem]{Proposition}
\def\Id{\mathop{\mathrm{Id}}\nolimits}

\theoremstyle{definition}
\newtheorem{definition}[theorem]{Definition}
\newtheorem{remark}[theorem]{Remark}
\def\<{{\langle}}
\def\>{{\rangle}} 
\def\supp{\mathop{\mathrm{supp}}\nolimits}
\def\curl{\mathop{\mathrm{curl}}\nolimits}
\def\div{\mathop{\mathrm{div}}\nolimits}

\newcommand{\Fcurv}{\mathcal{F}}

\makeatletter  
\newcommand{\Extend}[5]{\ext@arrow0099{\arrowfill@#1#2#3}{#4}{#5}}
\makeatother

\begin{document}
	\title[Defocusing damped NLS ]{Scattering theory for 3d cubic damped  magnetic Schr\"odinger equation }

	\author[L. Fanelli]{Luca Fanelli}
	\address{Luca Fanelli \newline
		Ikerbasque, Basque Foundation for Science; Departamento de Matem\'aticas, Euskal Herriko Unibertsitatea(EHU), 48940 Leioa, Spain; BCAM - Basque Center for Applied Mathematics, 48009 Bilbao, Spain}
	\email{lfanelli@bcamath.org}
	\author[H. Mizutani]{Haruya Mizutani}
	\address{Haruya Mizutani\newline
		\indent Department of Mathematics, Graduate School of Science, The University of Osaka, Toyon-
		aka, Osaka 560-0043, Japan}
	\email{haruya@math.sci.osaka-u.ac.jp}
	\author[Y. Song]{Yilin Song}
	\address{Yilin Song
		\newline \indent The Graduate School of China Academy of Engineering Physics,
		Beijing 100088,\ P. R. China}
	\email{songyilin21@gscaep.ac.cn}
	\author[Y. Wang]{Ying Wang}
	\address{Ying Wang\newline
		BCAM - Basque Center for Applied Mathematics, 48009, Bilbao, Spain}
	\email{ywang@bcamath.org}
	
	\author[J. Zheng]{Jiqiang Zheng}
	\address{Jiqiang Zheng
		\newline Institute of Applied Physics and Computational Mathematics, Beijing, 100088, China.
		\newline National Key Laboratory of Computational Physics, Beijing, 100088, China}
	\email{zheng\_jiqiang@iapcm.ac.cn, zhengjiqiang@gmail.com}

	\begin{abstract}
We consider the three-dimensional defocusing cubic nonlinear Schr\"odinger equation with variable coefficients, a magnetic potential, and a non-negative localized damping term,
\[
i\partial_tu+(\nabla-iA)\cdot G(\nabla-iA)u+ia(x)u=|u|^2u,
\qquad t>0,\quad x\in\mathbb R^3.
\]
No non-trapping condition is imposed on the metric $G$. Instead, the variable-coefficient region is assumed to be contained in the effective damping region. Under a one-centre condition on the tangential magnetic field, we prove global well-posedness for initial data in $H^{1+\varepsilon}$, uniform mass and energy bounds, and show the local energy decay. To obtain scattering, we impose a support condition on  the full magnetic field inside the damping region. Under these stronger assumptions, the solution scatters to a free Schr\"odinger evolution in $H^s$ for every $0\le s<1$. The appendix discusses a separate constant-damping framework for abstract Hamiltonians.
\end{abstract}

	\maketitle
	
	\begin{center}
		\begin{minipage}{100mm}
			{ \small {{\bf Key Words:}  damped magnetic  Schr\"odinger equation; Strichartz estimate;  interaction Morawetz estimate; local energy decay; scattering.}
				{}
			}\\
			{ \small {\bf AMS Classification:}
				{35P25,  35Q55, 47J35.}
			}
		\end{minipage}
	\end{center}
	
	% \tableofcontents %
	
	\section{Introduction}
	
	\noindent
	In this paper, we study the Cauchy problem for the three-dimensional defocusing cubic nonlinear Schr\"odinger equation with variable coefficients, magnetic potentials, and a non-negative damping term
	\begin{equation}\label{equ:DMNLS}
		\begin{cases}
			i\partial_tu+\Delta_{G}u+iau=|u|^{2}u,\quad (t,x)\in\R^+\times\R^3,\\
			u(0,x)=u_0(x),
		\end{cases}
	\end{equation}
	where $u(t,x):\R^+\times\R^3\to\C$ and the operator $\Delta_G$ is defined by
	$$\Delta_G=\nabla_A\cdot\big(G\nabla_A)=\sum_{j,k=1}^3\big(\partial_j-iA_j(x)\big)\big(G_{j,k}(x)\big(\pa_k-iA_k(x)\big)\big)$$
	with $\nabla_A=\nabla-iA(x)$, $A=(A_j)_{j=1}^3\in C^\infty(\R^3,\R^3)$ and $G=(G_{j,k})_{j,k=1}^3\in C^\infty(\R^3,\R^{3\times3})$ being symmetric and uniformly positive-definite: there exist $c,C>0$ such that for any $x,\xi\in \R^3$, 
	\begin{equation}\label{equ:Gpd}
		C|\xi|^2\geq G(x)\xi\cdot\xi\geq c|\xi|^2. 
	\end{equation}
	Moreover, we assume that the metric is the compact perturbation of Euclidean metric 
	$$G-I\in C_c^\infty(\R^3, \R^{3\times3}).$$
	The damping potential $a(x)\in C_c^\infty(\R^3)$ is assumed to be non-negative.

	Nonlinear damped Schr\"odinger equations arise in several physical models, including nonlinear optics, plasma physics, and fluid dynamics; see \cite{Fib01,GRH80,Tsu84}.
	
	When $G=I$, $A=0$ and $a=0$,  Cauchy problem \eqref{equ:DMNLS}  becomes the classical defocusing cubic nonlinear Schr\"odinger equation
	\begin{equation}\label{equ:NLS3dc}
		\begin{cases}
			i\pa_tu+\Delta u=|u|^{2}u,\quad (t,x)\in\R\times\R^3,\\
			u(0,x)=u_0(x)\in H^1(\R^3). 
		\end{cases}
	\end{equation}
	The global well-posedness of \eqref{equ:NLS3dc} in the energy space $H^1(\R^3)$ easily follows from Strichartz estimates and the fixed point argument, see  Cazenave \cite{Caz}. A global solution $u$ is said to  scatter if there exist $u_\pm\in H^1(\R^3)$ such that	$$\lim_{t\to\pm\infty}\|u(t)-e^{it\Delta}u_\pm\|_{H^1(\R^3)}=0.$$
	Ginibre and Velo \cite{GV85} proved the scattering result for \eqref{equ:NLS3dc} by exploiting the almost finite propagation speed	$$\int_{|x|\geq a} |u(t,x)|^2 \, dx \leq \int \min\left(\frac{|x|}{a},
	1\right) |u(t_0)|^2 \, dx + \frac{C}{a} \cdot |t-t_0|$$ for large
	spatial scale and the Morawetz inequality
	\begin{equation}\label{cmrsz}
		\iint_{\R\times \R^3}\frac{|u(t,x)|^{4}}{|x|} \, dtdx \lesssim
		\|u\|_{L_t^\infty\dot H^\frac12}^2
	\end{equation} for small spatial scale. Lately,  in \cite{CKSTT04}, Colliander, Keel,  Staffilani,  Takaoka, and  Tao
	establish the interaction Morawetz estimate
	\begin{equation}\label{equ:intMorIteam}
		\|u\|_{L_{t,x}^4(\R\times\R^3)}\leq C\|u_0\|_{L_x^2(\R^3)}^\frac34\sup_{t\in\R}\|u(t)\|_{\dot{H}^1_x(\R^3)}^\frac14,
	\end{equation}
	and give another simple proof of scattering result of \cite{GV85}. 
	\subsection{The trapping problem and the localized-damping mechanism}
For a non-flat metric $G$ and a non-zero magnetic potential $A$, Mizutani \cite{Miz13} established local-in-time Strichartz estimates with a loss of derivatives. Under a non-trapping condition this loss can be removed, and Cassano--D'Ancona \cite{CD16} proved scattering for the variable-coefficient magnetic Schr\"odinger equation in the energy space. We also refer to \cite{Fanelli2,Cacciafesta} for smoothing estimates for related variable-coefficient magnetic dispersive models.

The present paper addresses a different regime. We allow the metric to have trapped geodesics, so global-in-time Strichartz estimates without loss are not available in general. Trapping may support non-dispersive components and obstruct scattering. Our aim is to show that a localized damping term can remove the energy remaining in the trapping region, while the radiative component escaping to infinity is asymptotically governed by the free Schr\"odinger flow. The magnetic terms require two distinct controls: a one-centre projected-field condition for the radial virial estimate, and a full-curvature condition for the interaction Morawetz argument.

	Define the mass and energy associated with \eqref{equ:DMNLS} by
	\begin{align}\label{equ:massG}
		M(u(t))&:=\int_{\R^3}|u(t,x)|^2\,dx,\\\label{equ:energyG}
		E(u(t))&:=\frac12\int_{\R^3} G\nabla_A u\cdot \overline{\nabla_Au}\,dx+\frac14\int_{\R^3}|u(t,x)|^4\,dx. 
	\end{align}
	If the damping term $a\equiv0$, they are conserved by the flow of \eqref{equ:DMNLS}, while this is not the case if $a\not\equiv0$. 	
	
	Recently, Lafontaine and Shakarov \cite{LS25} established global well-posedness and scattering for the three-dimensional defocusing cubic Schr\"odinger equation  under the control condition
	\begin{equation}\label{equ:assumpona}
		\operatorname{supp}(G-I)\Subset\{x\in\R^3:a(x)>0\}.
	\end{equation}
	The local energy decay plays the key role in their proof. For the linear case with $A=0$ and $a=0$, we refer to \cite{Burq-Bouclet} for details. %It can be proven by establishing the weighted $L^2$ resolvent estimate.
	
	Inspired by \cite{LS25}, we extend their analysis to the magnetic setting. We write the magnetic potential as the one-form
	\[
	A=A_1\,dx_1+A_2\,dx_2+A_3\,dx_3
	\]
	and denote its full spatial magnetic curvature by
	\begin{equation}\label{def:full-curvature}
		\Fcurv=dA,
		\qquad
		\Fcurv_{jk}:=\partial_jA_k-\partial_kA_j,
		\qquad 1\le j,k\le3.
	\end{equation}
	In dimension three, $\Fcurv$ is equivalent, through the Hodge-star identification, to the full magnetic field
	\[
	B:=\curl A=(\partial_2A_3-\partial_3A_2,\partial_3A_1-\partial_1A_3,\partial_1A_2-\partial_2A_1).
	\]
	The centered tangential component is
	\[
	B_{\tau,0}(x):=\frac{x}{|x|}\wedge\curl A(x).
	\]
	We occasionally write $B_\tau=B_{\tau,0}$. This projected field is adapted to radial multipliers centred at the origin, whereas the interaction Morawetz identity involves the full curvature $\Fcurv$.

	\begin{assumption}[The tangential magnetic control]\label{assum:Assumpcond}
		Assume that $A\in C^\infty(\R^3;\R^3)$, that $\partial_x^\alpha A\in L^\infty(\R^3;\R^3)$ for every multi-index $\alpha$, and that the Coulomb gauge condition
		\[
		\div A=0
		\]
		holds. In addition, assume that one of the following conditions is satisfied:
		\begin{enumerate}
			\item[(i)] the one-centre smallness condition:  
			\begin{equation}\label{equ:Btaucond}
				\big\||x|^{3/2}B_{\tau,0}\big\|_{L_r^2L^\infty(S_r)}^2
				:=\int_0^\infty r^3\sup_{|x|=r}|B_{\tau,0}(x)|^2\,dr<\frac{2c_0}{(2c_0+1)^2},
			\end{equation}
            for some fixed constant $c_0>0$.
			\item[(ii)] the one-centre support condition
			\begin{equation}\label{control_B}
				\operatorname{supp}B_{\tau,0}\Subset\{x\in\R^3:a(x)>0\}.
			\end{equation}
		\end{enumerate}
	\end{assumption}

	\paragraph{Role of the one-centre hypothesis.}
	Assumption~\ref{assum:Assumpcond} is used only in the proof of the uniform energy bound and local energy decay in which the virial argument is invoked. However , to obtain the scattering in $H^s$, the interaction Morawetz estimate plays the central role. The interaction Morawetz identity  involves the magnetic field with all directions instead of the tangential direction.
	Therefore, we need to impose the following stronger conditions.
	\begin{assumption}[A stronger condition to yield scattering]\label{assum:scattering}
		Let
		\[
		C_j:=\{x\in\R^3:2^j\le |x|\le2^{j+1}\},\qquad j\in\Z,
		\]
		and assume that 
		\begin{equation}\label{condition1}
			\sum_{j\ge0}2^{3j}\|A\|_{L^\infty(C_j)}^2<\infty.
		\end{equation}
		Moreover, assume that the full magnetic curvature is supported in the effective damping region,
			\begin{equation}\label{control-full-curvature}
				\operatorname{supp}\Fcurv\Subset\{x\in\R^3:a(x)>0\}.
			\end{equation}

	\end{assumption}
\begin{remark}\label{magnetic-condition}
We will see that the condition \eqref{control-full-curvature} implies the following bounds of the magnetic curvature $\Fcurv$. More precisely, 
    	for arbitrary $\beta\in(0,1)$ such that
			\begin{align}
				\mathcal K_1(\Fcurv)&:=\sum_{j\in\Z}2^{j+1}\|\Fcurv\|_{L^\infty(C_j)}^{2-2\beta}<\infty,\label{condition:K1}\\
				\mathcal K_2(\Fcurv)&:=\int_0^\infty r^2\|\Fcurv\|_{L^\infty(S_r)}^{2\beta}\,dr<\infty;\label{condition:K2}
			\end{align}
where $B=\curl A$ is the  magnetic field which enjoys the same $L^\infty$ norm as the magnetic curvature $\Fcurv$. Indeed, since $a\in C_0^\infty(\R^3)$ and $\supp \Fcurv\subset K$ where $K$ is a compactly supported set. Then the two bounds hold naturally.  
\end{remark}
	\begin{remark}[Non-trivial admissible magnetic potentials]\label{rem:magnetic-examples}
		The assumptions above are non-empty. First, let $\psi\in C_c^\infty(\{a>0\})$ be real-valued and non-zero, and set
		\[
		A_\varepsilon(x)=\varepsilon\big(-\partial_2\psi(x),\partial_1\psi(x),0\big).
		\]
		Then $\div A_\varepsilon=0$, $A_\varepsilon$ is smooth and compactly supported, and $\operatorname{supp}(dA_\varepsilon)\Subset\{a>0\}$. Choosing $\psi$ so that $\partial_1^2\psi+\partial_2^2\psi\not\equiv0$ gives a non-zero magnetic field. Hence Assumptions~\ref{assum:Assumpcond} and~\ref{assum:scattering} hold.

		% A non-compactly supported example is
		% \begin{equation}\label{example:decaying-A}
		% A_{\varepsilon,\mu}(x)
		% =\varepsilon(-x_2,x_1,0)(1+|x|^2)^{-5/4}
		% \big[\log(e+|x|^2)\big]^{-\mu},
		% \qquad \mu>\frac12.
		% \end{equation}
		% It is divergence-free and satisfies
		% \[
		% |A_{\varepsilon,\mu}(x)|\lesssim |\varepsilon|\langle x\rangle^{-3/2}
		% [\log(e+|x|^2)]^{-\mu},
		% \qquad
		% |dA_{\varepsilon,\mu}(x)|\lesssim |\varepsilon|\langle x\rangle^{-5/2}
		% [\log(e+|x|^2)]^{-\mu}.
		% \]
		% Consequently, \eqref{condition1} holds for $\mu>1/2$, while \eqref{condition:K1}--\eqref{condition:K2} hold for every $\beta\in(3/5,4/5)$. Since the norm in \eqref{equ:Btaucond} is $O(\varepsilon^2)$, Assumption~\ref{assum:Assumpcond}(i) also holds when $|\varepsilon|$ is sufficiently small.
	\end{remark}

	We separate the global existence and scattering statements because they use different magnetic hypotheses.
	\begin{theorem}[Global well-posedness and local energy decay]\label{T:gwp}
		Let $a\in C_c^\infty(\R^3)$ be non-negative and let $G\in C^\infty(\R^3;\R^{3\times3})$ be symmetric, uniformly positive-definite, and satisfy \eqref{equ:assumpona}. Assume that Assumption~\ref{assum:Assumpcond} holds. Then, for every $\varepsilon>0$ and $u_0\in H^{1+\varepsilon}(\R^3)$, equation~\eqref{equ:DMNLS} has a unique global solution
		\[
		u\in C([0,\infty);H^{1+\varepsilon}(\R^3)).
		\]
		Moreover,
		\[
		M(u(t))\le M(u_0),
		\qquad
		E(u(t))\lesssim E(u_0)+M(u_0),
		\qquad t\ge0,
		\]
		and the integrated local energy estimates of Proposition~\ref{prop:uniformenergy} below hold. In particular, for every $\chi\in C_c^\infty(\R^3)$ and $0\le s<1$,
		\[
		\|\chi u(t)\|_{H^s}\longrightarrow0
		\qquad\text{as }t\to\infty.
		\]
	\end{theorem}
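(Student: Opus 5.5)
We follow the strategy of Lafontaine--Shakarov \cite{LS25}, adapted to the magnetic operator $\Delta_G$. The steps are: local theory in $H^{1+\varepsilon}$, a priori mass and energy control, a virial/local-smoothing argument controlling the trapped region by the damping, and finally local energy decay.

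\textbf{Local theory.} Since trapping is allowed, we cannot use Strichartz estimates without loss. Instead we use the local-in-time Strichartz estimates with loss of derivatives of Mizutani \cite{Miz13}, or simply the Sobolev embedding $H^{1+\varepsilon}\subset L^\infty$ in dimension three. The damping $ia$ is a bounded perturbation on every $H^s$, so a fixed point argument in $C([0,T];H^{1+\varepsilon})$ gives local well-posedness. The existence time depends only on $\|u_0\|_{H^{1+\varepsilon}}$, and blow-up can only occur if $\|u(t)\|_{H^{1+\varepsilon}}\to\infty$. Persistence of regularity comes from a Brezis--Gallouet type logarithmic estimate: the $H^{1+\varepsilon}$ norm grows at most double exponentially once $\|u(t)\|_{H^1}$ is bounded on finite intervals. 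Globality therefore reduces to an a priori $H^1$ bound.

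\textbf{Mass and energy identities.} Pairing the equation with $\bar u$ gives
\[
\tfrac{d}{dt}M(u)=-2\int a|u|^2\,dx\le0,
\]
which yields $M(u(t))\le M(u_0)$ and $\int_0^\infty\!\int a|u|^2\,dx\,dt\le M(u_0)/2$. Pairing with $\partial_t\bar u$ gives
\[
\tfrac{d}{dt}E(u)=-\Re\int a\,u\,\overline{(-\Delta_G u+|u|^2u)}\,dx
=-\int a\bigl(G\nabla_Au\cdot\overline{\nabla_Au}+|u|^4\bigr)\,dx-\Re\int G\nabla a\cdot\nabla_Au\,\bar u\,dx .
\]
The last term is bounded by $\int_{\supp a}|\nabla_Au||u|$, so the energy is not monotone. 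To bound it uniformly we need the integrated local smoothing estimate
\begin{equation*}
\int_0^T\!\!\int\chi\bigl(|\nabla_Au|^2+|u|^4\bigr)\,dx\,dt\lesssim \sup_{[0,T]}E(u)+M(u_0),
\end{equation*}
which is the content of Proposition~\ref{prop:uniformenergy}.

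\textbf{Virial estimate (the main obstacle).} We use a radial multiplier $\psi(|x|)$, equal to $\langle x\rangle$ outside a large ball containing $\supp(G-I)$. Differentiating $\Im\int\nabla\psi\cdot\nabla_Au\,\bar u$ in time produces four kinds of terms.
\begin{itemize}
\item The Hessian term has a good sign in the flat region.
\item The region where $G\neq I$ is handled by \eqref{equ:assumpona}: there we have $a\ge a_0>0$, and the energy dissipation controls $\int\!\!\int a|\nabla_Au|^2$.
\item The damping contributes $\Im\int a\,\nabla\psi\cdot\nabla_A u\,\bar u$, which is absorbed by Cauchy--Schwarz against the mass and energy dissipation.
\item The magnetic term is $\Im\int\psi'(r)\,u\,B_{\tau,0}\cdot\overline{\nabla_A u}$, since only the tangential field enters for a radial multiplier.
\end{itemize}
Under Assumption~\ref{assum:Assumpcond}(ii) the magnetic term lives in $\{a>0\}$ and is absorbed by dissipation as above. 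Under (i) it is bounded via a weighted Hardy inequality by $\||x|^{3/2}B_{\tau,0}\|_{L^2_rL^\infty}$ times the Morawetz quantities $\int\!\!\int|\nabla_\tau u|^2/|x|$ and $\sup_t\|u\|_{\dot H^{1/2}}$-type terms. The constant $2c_0/(2c_0+1)^2$ is exactly what allows absorption after optimizing a Young inequality with parameter $c_0$.

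Combining the virial estimate with the energy identity and a Gronwall/bootstrap in $\sup_{[0,T]}E$ gives $E(u(t))\lesssim E(u_0)+M(u_0)$ uniformly in $t$, hence global existence. We expect this absorption, keeping all constants uniform in $T$, to be the delicate step.

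\textbf{Local energy decay.} The two integrated bounds give $\int_0^\infty\|\chi u(t)\|_{H^1}^2\,dt<\infty$. The equation and the uniform $H^1$ bound show that $t\mapsto\|\chi u(t)\|_{L^2}^2$ is uniformly Lipschitz. A function that is integrable and uniformly Lipschitz tends to zero, so $\|\chi u(t)\|_{L^2}\to0$. Interpolating with the uniform $H^1$ bound gives $\|\chi u(t)\|_{H^s}\to0$ for every $0\le s<1$.
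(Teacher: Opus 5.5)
The gap is the step you flag as delicate, and it is the heart of the theorem: as you set it up, the uniform-in-time energy bound does not close. You state the local smoothing estimate with right-hand side $\sup_{[0,T]}E(u)+M(u_0)$, which is linear in the energy. Feeding it into the energy identity gives $\sup_{[0,T]}E\le E(u_0)+C\bigl(\sup_{[0,T]}E+M(u_0)\bigr)$, which says nothing unless $C<1$. Gronwall only gives a $T$-dependent bound; that suffices for globality, but not for $E(u(t))\lesssim E(u_0)+M(u_0)$ or for the estimates on $[0,\infty)$ that local energy decay needs. Likewise, ``the energy dissipation controls $\iint a|\nabla_Au|^2$'' is circular, because the dissipation identity contains the unsigned flux $\Re\iint\bar u\,G\nabla a\cdot\nabla_Au$. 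This flux lives on $\supp\nabla a$, where $a$ may vanish.

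The paper breaks the circle with two observations. First, $\Re(\bar u\nabla_Au)=\frac12\nabla|u|^2$, so the flux equals $-\frac12\iint|u|^2\,\nabla\cdot(G\nabla a)$. Hence energy growth is controlled by local \emph{mass}: $E(u(t))-E(u_0)\lesssim\lambda_1(t):=\sup_{R>0}R^{-2}\int_0^t\int_{|x|=R}|u|^2\,d\omega\,ds$. Second, for the family $\rho=R\rho_0(|x|/R)$, $-\Delta^2\rho$ contains the positive surface term $R^{-2}\delta_{|x|=R}$. This gives $\sup_{R}M_\rho(t)\ge C_2\lambda_1(t)-C_3(M(u_0)+E(u_0))$, whereas $|M_\rho(t)|\lesssim M(u)^{1/2}E(u)^{1/2}\lesssim M(u_0)^{1/2}(E(u_0)+C\lambda_1(t))^{1/2}$ grows only like $\lambda_1^{1/2}$. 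The resulting quadratic inequality bounds $\lambda_1$, and hence $E$, uniformly in $t$. Where the flux must be estimated in absolute value (the $\supp(G-I)$ error and, under (ii), the magnetic term), one also needs $|\nabla a|\le C_\delta a+\delta\mathbf 1_{\supp a}$. This gives it a small coefficient that the positive virial terms can absorb.

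Under Assumption~\ref{assum:Assumpcond}(i) your handling of the magnetic term also fails. After Cauchy--Schwarz against $\iint|\nabla_A^\tau u|^2/|x|$, you must control the space-time integral $\int_0^t\!\int|x||B_{\tau,0}|^2|u|^2\,dx\,ds$, and no $\sup_t\|u\|_{\dot H^{1/2}}$-type quantity bounds this uniformly in $t$. The paper bounds it via the coarea formula by $\||x|^{3/2}B_{\tau,0}\|_{L^2_rL^\infty(S_r)}^2\lambda_1(t)$. The threshold $2c_0/(2c_0+1)^2$ is exactly the positive-definiteness condition for $\frac12\lambda_1+4c_0\lambda_3-(4c_0+2)\||x|^{3/2}B_{\tau,0}\|_{L^2_rL^\infty(S_r)}\lambda_1^{1/2}\lambda_3^{1/2}$, where $c_0$ is the constant offset in $\rho_r=c_0+\cdots$ and $\lambda_3=\iint|\nabla_A^\tau u|^2/|x|$. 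A multiplier equal to $\langle x\rangle$ near infinity has no such parameter. Its bi-Laplacian yields only $\iint\langle x\rangle^{-7}|u|^2$, which does not match \eqref{equ:Btaucond}.

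Two smaller points. First, $H^{1+\varepsilon}\not\subset L^\infty(\R^3)$ for $\varepsilon\le\frac12$, so the local theory must use the lossy Strichartz space $L^q_tW^{s-1/q,r}\subset L^q_tL^\infty$. Second, the spatial Brezis--Gallou\"et inequality in three dimensions needs $H^{3/2}$. Getting globality from an $H^1$ bound therefore requires the Burq--G\'erard--Tzvetkov space-time estimate $\|u\|_{L^2(I,L^\infty)}\lesssim1+|I|^{1/2}\log^{1/2}(2+\|u\|_{L^2(I,H^s)})$, built from semiclassical Strichartz estimates on time scales $\sim h$. It also needs a Yudovich uniqueness argument to pass from smooth data to $H^{1+\varepsilon}$. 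Your final step (integrable and uniformly Lipschitz implies decay, then interpolation) is correct.
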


	\begin{theorem}[Scattering]\label{T:main}
		Under the assumptions of Theorem~\ref{T:gwp}, assume in addition Assumption~\ref{assum:scattering}. Then there exists $u_+\in H^1(\R^3)$ such that, for every $0\le s<1$,
		\begin{equation}\label{equ:sca}
			\lim_{t\to+\infty}\|u(t)-e^{it\Delta}u_+\|_{H^s(\R^3)}=0.
		\end{equation}
	\end{theorem}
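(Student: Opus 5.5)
The plan follows the strategy of \cite{LS25}: local energy decay kills the component in the trapping region, and an interaction Morawetz estimate, valid outside the damping and magnetic region, controls the radiating part and yields a global spacetime bound in the free region. From Theorem~\ref{T:gwp} I take the global solution with $\sup_t\|u(t)\|_{H^1}\lesssim E(u_0)+M(u_0)$, the integrated local energy estimates of Proposition~\ref{prop:uniformenergy}, and $\|\chi u(t)\|_{H^s}\to0$ for $\chi\in C_c^\infty$.

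\textbf{Step 1: cut-off splitting.} Choose $\chi\in C_c^\infty(\{a>0\})$ with $\chi=1$ on a neighbourhood of $\supp(G-I)\cup\supp\Fcurv$; this is possible by \eqref{equ:assumpona} and \eqref{control-full-curvature}. The function $w=(1-\chi)u$ solves a magnetic Schr\"odinger equation with flat metric,
\[
i\pa_tw+\Delta_Aw=|u|^2w+[\Delta_G,\chi]u,\qquad \Delta_A=\nabla_A\cdot\nabla_A.
\]
The damping term drops out here because $a(1-\chi)$ can be arranged to vanish near the support of the nonlinear region, or else it is absorbed into the commutator error. The commutator $[\Delta_G,\chi]u$ is first order and compactly supported. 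Proposition~\ref{prop:uniformenergy} gives $\int_0^\infty\|\tilde\chi u\|_{H^{1/2}}^2dt<\infty$ for localized $\tilde\chi$, so the commutator lies in the dual local-smoothing space $\langle x\rangle^{\sigma} L^2_tH^{-1/2}$.

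\textbf{Step 2: interaction Morawetz.} This is the main obstacle. I would prove
\[
\|u\|_{L^4_{t,x}([0,\infty)\times\R^3)}<\infty
\]
by running the interaction Morawetz computation of \cite{CKSTT04} for the full equation with the weight $|x-y|$. The error terms are of three kinds. First, the damping contributes with a good sign for mass, together with terms $\iint a|u(x)|^2|u(y)|^2$-type times momentum; these are bounded by $\sup_t\|u\|_{H^1}^2\int_0^\infty\!\!\int a|u|^2\,dx\,dt\le \|u\|_{H^1}^2M(u_0)$, using the mass dissipation identity. Second, the metric region produces terms supported in $\supp(G-I)$, which are bounded by the local energy estimate. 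Third, the magnetic field produces the Lorentz-force term $\iint \Fcurv_{jk}(x)\,\frac{(x-y)_j}{|x-y|}\,\Im(\bar u\,\pa_{A,k}u)(x)\,|u(y)|^2$. Since $\supp\Fcurv$ is compact inside $\{a>0\}$, this is controlled by $M(u_0)\int_0^\infty\|\tilde\chi u\|_{H^{1/2}}^2dt$, again via Proposition~\ref{prop:uniformenergy}; the quantities $\mathcal K_1,\mathcal K_2$ of Remark~\ref{magnetic-condition} quantify this. The delicate point is that the local energy estimate gives only $H^{1/2}$-type or $a$-weighted control, while the force term involves $\nabla_Au$. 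I would first try to handle this by interpolating between $\int_0^\infty\int a|u|^2<\infty$ and the uniform $H^{1+\varepsilon}$, or rather $H^1$, bound, using a positive commutator with $\chi^2$. Condition \eqref{condition1} ensures that the momentum density $\Im(\bar u\nabla_Au)$ and $\Im(\bar u\nabla u)$ differ by $A|u|^2$, which is integrable against the bounded weight, so the Morawetz action is bounded by $\sup_t\|u\|_{L^2}^3\|u\|_{\dot H^{1/2}}$.

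\textbf{Step 3: Strichartz and the scattering state.} Global-in-time Strichartz estimates hold for $e^{it\Delta}$, and the magnetic perturbation $2iA\cdot\nabla+\ldots$ is treated as a source. Condition \eqref{condition1} gives $\langle x\rangle^{\sigma}A\in L^2$-summable dyadically, which suffices for the Kato smoothing endpoint estimate. With the $L^4_{t,x}$ bound, interpolation with $L^\infty_tH^1$ gives finite $L^5_{t,x}$ and $L^{10/3}$-type norms on $[T,\infty)$, and these are small as $T\to\infty$. Writing Duhamel for $w$,
\[
e^{-it\Delta}w(t)=e^{-iT\Delta}w(T)-i\int_T^te^{-is\Delta}\bigl(F(s)\bigr)\,ds,
\]
where $F$ collects the nonlinear, commutator and magnetic terms, I would show that the integral is Cauchy in $H^s$ for $s<1$. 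The nonlinear part is handled by Strichartz together with the spacetime bound. The commutator and magnetic parts are handled by the dual Kato smoothing estimate $\|\int e^{-is\Delta}\langle x\rangle^{-\sigma}f\|_{L^2}\lesssim\|f\|_{L^2_tL^2}$ combined with the local energy estimates, interpolated against $H^1$; this interpolation loses the endpoint $s=1$. This defines $u_+$, which lies in $H^1$ by weak compactness. Since $\|\chi u(t)\|_{H^s}\to0$ and $u=w+\chi u$, we obtain \eqref{equ:sca}.
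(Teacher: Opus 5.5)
Your proposal is essentially the paper's argument. Local energy decay disposes of $\chi u$. The magnetic interaction Morawetz identity yields $\|u\|_{L^4_{t,x}([0,\infty)\times\R^3)}<\infty$, with the damping, metric and full-curvature errors absorbed by the integrated local energy estimates; this is exactly where $\supp\Fcurv\Subset\{a>0\}$ is used. Then $(1-\chi)u$ is shown to scatter in $L^2$, using the dual Kato--Yajima estimate for the linear and magnetic terms and the endpoint Strichartz estimate for the cubic term, followed by interpolation with the uniform $H^1$ bound.

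Two simplifications are worth noting. First, taking $\chi\equiv1$ near $\supp a$, as the paper does, removes the damping term from the equation for $(1-\chi)u$ exactly. Second, the ``delicate point'' you raise is not one: Proposition~\ref{prop:uniformenergy} already gives $\sup_{R>0}R^{-1}\int_0^\infty\int_{|x|\le R}|\nabla_Au|^2\,dx\,dt<\infty$. This bound controls the curvature term directly, whereas interpolating $\int_0^\infty\int a|u|^2$ against $L^\infty_tH^1$ alone would only give $L^2_t$, not $L^1_t$, integrability. Summed over dyadic shells against \eqref{condition1}, the same bound also controls the far-field term $A\cdot\nabla_Au$.
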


	\begin{remark}
		The present proof first establishes strong convergence in $L^2$ and then interpolates with the uniform $H^1$ bound. It therefore yields scattering in $H^s$ for $s<1$; the theorem does not assert that $H^1$ scattering is impossible under the stated assumptions.
	\end{remark}
\begin{remark}
  We will provide an abstract scattering result in $H^1$ under a non-trapping assumption later. In contrast, this theorem shows that when trapping is allowed, the present argument yields scattering in $H^s$
for every $s<1$. Obtaining $H^1$-scattering under trapping needs more delicate analysis for the associated linear damped Schr\"odinger flow which is far beyond the scope of this method. This is a widely open problem even if $A=0$. 

\end{remark}

\subsection{A complementary $H^1$ scattering results for non-trapping geometries}
As a complementary result, we will show the $H^1$ scattering for several Hamiltonian $H$. We should emphasize that $H$ is non-trapping in both settings. Before presenting our result, we first recall the classical result of $H^1$ scattering for damped NLS proved by Inui \cite{Inui}. In \cite{Inui}, exponential scattering was established for the classical nonlinear Schr\"odinger equation with constant damping on $\mathbb R^d$:
\begin{align*}
  \begin{cases}
  i\partial_tu+\Delta u+iau=\mu|u|^{p-1}u,&(t,x)\in\R\times\R^d,\\
  u(0,x)=u_0(x).
  \end{cases}
\end{align*}
 More precisely,   for the  defocusing case with $L^2$ subcritical/critical nonlinearity   with  $1<p\leq1+\frac{4}{d}$, he showed that the solution exponentially scatters in $L^2$. In the  focusing case, exponential scattering holds provided $\|u_0\|_{L^2}<\|Q\|_{L^2}$ if $p\leq 1+\frac{4}{d}$. Under certain assumption of the Hamiltonian $H$, the framework of \cite{Inui} can be extended naturally  to the following abstract setting for positive Hamiltonian $H$. Let $\mathcal H:=L^2(\mathbb R^d)$ be a complex Hilbert space with inner product $\ip{\cdot}{\cdot}$. We consider the abstract damped nonlinear Schr\"odinger equation with the form
\begin{equation}\label{eq:damped}
i\partial_t u - Hu - \mu |u|^{p-1}u + i\mathcal B u = 0.
\end{equation}
The case $\mu=+1$ corresponds to the \emph{defocusing} nonlinearity, while $\mu=-1$ corresponds to the \emph{focusing} nonlinearity. 	
	To state our abstract scattering result, we need the following assumptions on the Hamiltonian $H$.  
	\begin{assumption}\label{ass:H}Let $H\ge 0$ be a selfadjoint operator on $\mathcal H$, obtained as the Friedrichs extension
		associated with a densely defined, closed, Hermitian, nonnegative quadratic form $q$ with form domain
		\[
		\mathcal Q(H)=D(H^{1/2}),\qquad q(u)=\norm{H^{1/2}u}_{\mathcal H}^2.
		\]
		We define the norm by
\begin{align*}
  \|u\|_{D(H^\frac12)}:=\|u\|_{\mathcal{H}}+\|H^\frac{1}{2}u\|_{\mathcal{H}}.
\end{align*}
		Moreover, we consider two types of positive Hamiltonian $H$.
		The first one is the classical fractional Laplacian case, i.e. $H=(-\Delta)^s$ while in the second case we assume that the Hamiltonian $H$ is a second-order self-adjoint operator on
		$L^2(\mathbb R^d)$ such that
		\[
		q(u)\simeq \|\nabla u\|_{L^2}^2
		\qquad \text{on } H^1(\mathbb R^d)\cap D(H^{1/2}).
		\]

		Fix $p>1$ and assume we are in a concrete realization $\mathcal H=L^2(\R^d)$.
		Assume moreover that $D(H^{1/2})\hookrightarrow L^{p+1}(\R^d)$ so that
		$\norm{u}_{L^{p+1}}^{p+1}$ is finite for $u\in D(H^{1/2})$. 
	\end{assumption}

	\begin{assumption}
			[Constant damping hypothesis]\label{ass:damp}
			
			Assume that the constant damping $\mathcal{B}=bI$ satisfies $b\geq a$ where $a>0$.
	\end{assumption}

Next, we introduce the notation of \emph{exponentially scattering} for this abstract damped NLS.
	\begin{definition}[Exponential scattering]\label{def:exp-scatt}
				Let $H\ge0$ be a selfadjoint operator on $\mathcal H=L^2(\mathbb R^d;\mathbb C)$ and let
				$\mathcal B$ satisfies Assumption~\ref{ass:damp} with  $a>0$.
				A global solution
				\[
				u \in C([0,\infty);D(H^{1/2}))
				\]
				to the damped nonlinear Schr\"odinger equation
				\[
				i\partial_t u - Hu - \mu|u|^{p-1}u + i\mathcal Bu = 0
				\]
				is said to \emph{exponentially scatter forward in time} if there exists
				$u_+\in D(H^{1/2})$ and $\gamma>0$ such that
				\[
				\lim_{t\to+\infty}
				e^{\gamma t}\,
				\big\|
				u(t)-e^{-bt}e^{itH}u_+
				\big\|_{D(H^{1/2})}
				=0.
				\]
	\end{definition}

Now, we can formulate the abstract exponentially scattering theorem in both focusing and defocusing regime.  We start with the following two priori assumptions, which will be fulfilled by the example provided below. 
	\begin{assumption}[Hamiltonian]\label{ass:Hamiltonian}
		Let $H$ be a positive Hamiltonian and $s\in(0,1]$ Assume that $\gamma_0$ is the maximal index such that the following Sobolev embedding holds for $\gamma\leq\gamma_0:=\frac{2d}{d-2s}$ and $d>2s$,
		\begin{align*}
			D(H^\frac{1}{2})\hookrightarrow L^\gamma,\,\,\gamma\leq\gamma_0.
		\end{align*}
	\end{assumption}
	\paragraph{Gagliardo--Nirenberg and nonlinear estimates.}
		Assume that
		\begin{equation}\label{eq:GN-abstract}
			\|u\|_{L^{p+1}}^{p+1}
			\le C_{GN}\|H^{1/2}u\|_{L^2}^{\frac{d}{2s}(p-1)}
			\|u\|_{L^2}^{(p+1)-\frac{d}{2s}(p-1)}
		\end{equation}
		for $u\in D(H^{1/2})$. We also assume the nonlinear estimate needed in the Strichartz space,
		\begin{equation}\label{eq:abstract-chain-rule}
			\|H^{1/2}(|u|^{p-1}u)\|_{L^{r'}}
			\lesssim \|u\|_{L^r}^{p-1}\|H^{1/2}u\|_{L^r},
		\end{equation}
		for the exponent $r=p+1$, together with the corresponding Lipschitz estimate for differences.

	\begin{remark}\label{rem:abstract-nonlinear-estimates}
		The Gagliardo--Nirenberg inequality and the nonlinear chain rule above are explicit hypotheses of the abstract framework. They do not follow from the Sobolev embedding alone and must be verified for each concrete Hamiltonian.
	\end{remark}
	\begin{remark}

	   We emphasize that if the Riesz transform $H^\frac{1}{2}(-\Delta)^{-\frac12}$ is $W^{s,p}$ bounded for some $s(p)$ and $p>1$, then the above nonlinear estimate holds automatically. Indeed, by using the $W^{s,p}$ boundedness of Riesz transform, one can show the equivalence of Sobolev norm $\|u\|_{D(H^\frac12)}\sim\|u\|_{W^{s,q}}$. Then using the fractional chain rule for $(-\Delta)^\alpha$, one can get this estimate.
	\end{remark}

	\begin{assumption}[Strichartz estimate for the linear flow]\label{ass-strichartz}
		Let $d\geq1$ and $(q,r)\in\mathbb R^2$ such that $2\leq q\leq\infty$ and $2\leq r<\gamma_0$. Then we assume that the following estimate 
		\begin{align*}
			\big\|e^{itH}u_0\big\|_{L_t^qH_x^{s,r}(I\times\mathbb R^d)}\lesssim \|u_0\|_{D(H^\frac{1}{2})},
		\end{align*}
		at least holds for $(q,r)=\big(\frac{4s(p+1)}{d(p-1)},p+1\big)$. We also assume the corresponding retarded inhomogeneous estimate. Here, we define the Sobolev norm $\|f\|_{H_x^{s,r}}:=\|\langle H\rangle^{1/2}f\|_{L^r}$ via functional calculus where $\langle x\rangle:=(1+|x|^2)^\frac12$ 
	\end{assumption}

	Throughout this appendix, $\mathcal B$ denotes the constant damping operator; in the main body, the localized damping is denoted by $a(x)$.
	\begin{theorem}\label{thm-abstract}
		Let $\mu\in\{-1,1\}$ and assume Assumptions~\ref{ass:H}, \ref{ass:damp}, \ref{ass:Hamiltonian}, and \ref{ass-strichartz}, together with \eqref{eq:GN-abstract}--\eqref{eq:abstract-chain-rule}. Then, for every $u_0\in D(H^{1/2})$, equation~\eqref{eq:damped} is globally well posed in the defocusing case $\mu=1$ throughout the energy-subcritical range. In the focusing case $\mu=-1$, assume in addition the standard coercive mass-threshold condition associated with the sharp Gagliardo--Nirenberg inequality, with $1<p\le1+4s/d$. In either case the global solution exponentially scatters forward in time in the sense of Definition~\ref{def:exp-scatt}.
	\end{theorem}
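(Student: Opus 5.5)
The plan follows Inui's constant-damping argument. The key step is the gauge change $v(t)=e^{bt}u(t)$, which removes the damping. Since $\mathcal B=bI$ commutes with $H$, $v$ solves
\[
i\partial_t v - Hv = \mu e^{-(p-1)bt}|v|^{p-1}v,
\]
an undamped NLS whose nonlinearity carries the integrable weight $e^{-(p-1)bt}$. Exponential scattering of $u$ with rate $\gamma<b$ is then equivalent to convergence of $e^{-itH}v(t)$ in $D(H^{1/2})$. We need this convergence to hold with an exponential rate, in a form compatible with the sign convention $e^{itH}u_+$ of Definition~\ref{def:exp-scatt} (after relabelling $H\to -H$ in the linear propagator if necessary).

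\emph{Step 1: local well-posedness.} We run a contraction in $C_tD(H^{1/2})\cap L_t^qH_x^{s,r}$ on short intervals, with $(q,r)$ the pair of Assumption~\ref{ass-strichartz}. This uses the homogeneous and retarded Strichartz estimates, the chain rule \eqref{eq:abstract-chain-rule} with its Lipschitz version, and the Sobolev embedding of Assumption~\ref{ass:Hamiltonian} for $r=p+1<\gamma_0$. Because the problem is energy-subcritical, the existence time depends only on $\|u_0\|_{D(H^{1/2})}$ and is uniform in time thanks to the weight $e^{-(p-1)bt}\le 1$. Hence a uniform a priori $D(H^{1/2})$ bound gives global existence.

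\emph{Step 2: a priori bounds for $u$.} The mass identity gives $\frac{d}{dt}\|u\|_2^2=-2b\|u\|_2^2$, so $M(u(t))=e^{-2bt}M(u_0)$. For the energy $E(u)=\frac12\|H^{1/2}u\|^2+\frac{\mu}{p+1}\|u\|_{p+1}^{p+1}$, a direct computation gives
\[
\frac{d}{dt}E(u)=-2b\|H^{1/2}u\|^2-\frac{2b\mu}{p+1}\cdot\frac{p+1}{2}\|u\|_{p+1}^{p+1}.
\]
In the defocusing case $\mu=1$ this yields $\frac{d}{dt}E\le -2bE$, hence $E(u(t))\le e^{-2bt}E(u_0)$ and therefore $\|H^{1/2}u(t)\|\lesssim e^{-bt}$.

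In the focusing case with $p\le 1+4s/d$, I would combine \eqref{eq:GN-abstract} with the mass threshold and the monotone mass to obtain coercivity $E(u)\gtrsim \|H^{1/2}u\|^2$ for all times. This uses $\frac{d}{2s}(p-1)\le2$: in the subcritical case by Young's inequality, and at the critical exponent by the threshold $\|u_0\|_2<\|Q\|_2$, which persists because the mass decreases. Writing $\frac{d}{dt}E=-2bE+b\mu\frac{p-1}{p+1}\|u\|_{p+1}^{p+1}$ with $\mu=-1$, the extra term is nonpositive, so again $E(u(t))\le e^{-2bt}E(u_0)$. In both cases we get $\|u(t)\|_{D(H^{1/2})}\lesssim e^{-bt}$, i.e.\ $v$ is bounded in $D(H^{1/2})$.

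\emph{Step 3: Cauchy property.} For $v$, we apply Strichartz on unit intervals $[n,n+1]$ using the uniform bound on $v$. Together with the chain rule this gives
\[
\|e^{-it_2H}v(t_2)-e^{-it_1H}v(t_1)\|_{D(H^{1/2})}\lesssim \sum_{n\ge t_1-1}e^{-(p-1)bn}\|v\|_{L^q_tH^{s,r}_x([n,n+1])}^p\lesssim e^{-(p-1)bt_1}.
\]
Hence $e^{-itH}v(t)\to v_+$ in $D(H^{1/2})$ at rate $e^{-(p-1)bt}$. Setting $u_+=v_+$ gives
\[
\|u(t)-e^{-bt}e^{itH}u_+\|_{D(H^{1/2})}\lesssim e^{-bt}e^{-(p-1)bt},
\]
so any $\gamma<pb$ works. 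The local Strichartz norm on each unit interval is controlled by the uniform bound through a bootstrap on subintervals, whose length depends only on that bound.

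\emph{Main obstacle.} The delicate point is the focusing critical case $p=1+4s/d$. There coercivity must come from the sharp Gagliardo--Nirenberg constant, and the threshold condition must be propagated along the damped flow; monotonicity of the mass is exactly what makes this propagation work. I also expect some care with the nonhomogeneous norm $H^{s,r}$ versus $D(H^{1/2})$ when closing the unit-interval Strichartz bootstrap, using \eqref{eq:abstract-chain-rule}.
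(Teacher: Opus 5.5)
\textbf{The focusing case has a genuine gap, caused by a sign error in your energy identity.} Your defocusing argument and your Step 3 follow the paper's route: monotonicity of the weighted energy $e^{2bt}E$, then a Strichartz/Cauchy argument for $e^{bt}u$, which is the paper's scattering criterion. For the equation as written, $\partial_t u=-i(Hu+\mu|u|^{p-1}u)-bu$, the correct identity is
\[
\frac{d}{dt}E(u)=-b\|H^{1/2}u\|_{L^2}^2-\mu b\|u\|_{L^{p+1}}^{p+1}
=-2bE(u)-\mu b\,\frac{p-1}{p+1}\|u\|_{L^{p+1}}^{p+1}.
\]
Your first formula has an extra factor $2$ on the kinetic term, and your second has the wrong sign in front of $\mu$. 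The defocusing conclusion survives, but for $\mu=-1$ you get
\[
\frac{d}{dt}\bigl(e^{2bt}E(u(t))\bigr)=b\,\frac{p-1}{p+1}\,e^{2bt}\|u(t)\|_{L^{p+1}}^{p+1}\ge 0 .
\]
So the weighted energy can grow in the focusing case: the potential part decays faster than the kinetic part, and its negative contribution fades. The claim $E(u(t))\le e^{-2bt}E(u_0)$ is therefore unjustified, and with it your uniform bound on $v=e^{bt}u$ in $D(H^{1/2})$.

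Separately, coercivity $E\gtrsim\|H^{1/2}u\|_{L^2}^2$ cannot hold in general in the mass-subcritical focusing case. For $H=(-\Delta)^s$, mass-preserving rescaling produces negative energy at any mass. Young's inequality only gives $E\ge\frac14\|H^{1/2}u\|_{L^2}^2-C\|u\|_{L^2}^{\kappa}$.

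\textbf{The missing idea is the paper's Gronwall argument.} Set $f(t)=e^{bt}\|H^{1/2}u(t)\|_{L^2}$ and integrate the identity above (case $\mu=-1$):
\[
\tfrac12 f(t)^2=E(u_0)+\tfrac{1}{p+1}e^{2bt}\|u(t)\|_{L^{p+1}}^{p+1}+b\,\tfrac{p-1}{p+1}\int_0^t e^{2b\tau}\|u(\tau)\|_{L^{p+1}}^{p+1}\,d\tau .
\]
Then combine \eqref{eq:GN-abstract} with the exact mass decay $\|u(\tau)\|_{L^2}=e^{-b\tau}\|u_0\|_{L^2}$ to get
\[
e^{2b\tau}\|u(\tau)\|_{L^{p+1}}^{p+1}\le C_{GN}\|u_0\|_{L^2}^{p+1-\theta}\,e^{-(p-1)b\tau}f(\tau)^{\theta},\qquad \theta=\tfrac{d(p-1)}{2s}\le 2 .
\]
The pointwise term is absorbed into $\frac12 f(t)^2$: by Young's inequality when $\theta<2$, and by the mass threshold $\Theta=\frac{2C_{GN}}{p+1}\|u_0\|_{L^2}^{p-1}<1$ when $\theta=2$. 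The integral term carries the integrable weight $e^{-(p-1)b\tau}$, so Gronwall's inequality gives $\sup_{t\ge0}f(t)<\infty$. Only after this bound does your Step 3 apply. This is also the step where the mass threshold genuinely enters, rather than merely through coercivity of $E$.
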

	As a consequence of Theorem \ref{thm-abstract} and the decay of mass, we have the following exponential decay of the energy.
	\begin{corollary}[Exponential decay of the energy]\label{lem:energy-exp-decay}
		Under the assumptions of Theorem~\ref{thm-abstract}, there exists $C=C(u_0)>0$ such that
		\begin{equation}\label{eq:energy-exp-decay}
			|E(u(t))|\le C e^{-2bt},
			\qquad t\ge0.
		\end{equation}
	\end{corollary}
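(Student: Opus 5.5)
The plan is to reduce the energy bound to the exponential convergence from Theorem~\ref{thm-abstract} combined with an exponential mass decay. Here $E(u)=\frac12\|H^{1/2}u\|_{L^2}^2+\frac{\mu}{p+1}\|u\|_{L^{p+1}}^{p+1}$.

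\textbf{Step 1: mass decay.} Pair \eqref{eq:damped} with $u$ and take imaginary parts. Since $H$ is self-adjoint and the nonlinearity is gauge invariant, this gives
\[
\frac{d}{dt}\|u(t)\|_{L^2}^2=-2b\|u(t)\|_{L^2}^2,
\]
so $\|u(t)\|_{L^2}=e^{-bt}\|u_0\|_{L^2}$ exactly. The identity is justified for $D(H^{1/2})$ solutions by the usual regularization, or by continuous dependence from the local theory.

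\textbf{Step 2: the kinetic part.} Theorem~\ref{thm-abstract} gives $\|u(t)-e^{-bt}e^{itH}u_+\|_{D(H^{1/2})}=o(e^{-\gamma t})$. Since $e^{itH}$ is unitary and commutes with $H^{1/2}$,
\[
\|H^{1/2}u(t)\|_{L^2}\le e^{-bt}\|H^{1/2}u_+\|_{L^2}+o(e^{-\gamma t}).
\]
This alone only yields decay at rate $\min(b,\gamma)$. To obtain the sharp rate $e^{-2bt}$ for the quadratic term, the rate of convergence in the scattering statement must be at least $b$.

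\textbf{Step 3: upgrading the rate.} Rather than rely on the unspecified $\gamma$, I would set $v(t)=e^{bt}u(t)$. Then $v$ solves the undamped equation
\[
i\partial_t v-Hv=\mu e^{-(p-1)bt}|v|^{p-1}v .
\]
The nonlinearity carries the exponentially decaying factor $e^{-(p-1)bt}$. Applying the Strichartz estimates of Assumption~\ref{ass-strichartz} together with \eqref{eq:abstract-chain-rule} on intervals $[T,\infty)$ proves two things: $\|v\|_{L^\infty_t D(H^{1/2})}$ is bounded, and $\|v\|_{L^q_tH^{s,r}_x([T,\infty))}\lesssim 1$. This is the bootstrap underlying Theorem~\ref{thm-abstract}.

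\textbf{Step 4: conclusion.} By Step 3, $\|H^{1/2}u(t)\|_{L^2}=e^{-bt}\|H^{1/2}v(t)\|_{L^2}\lesssim e^{-bt}$. The Gagliardo--Nirenberg inequality \eqref{eq:GN-abstract}, or simply the embedding $D(H^{1/2})\hookrightarrow L^{p+1}$, then gives
\[
\|u(t)\|_{L^{p+1}}^{p+1}\lesssim \|u(t)\|_{D(H^{1/2})}^{p+1}\lesssim e^{-(p+1)bt}\le e^{-2bt}.
\]
Summing the two parts of $E$ yields $|E(u(t))|\le Ce^{-2bt}$, with $C$ depending on $\sup_t\|v(t)\|_{D(H^{1/2})}$ and hence on $u_0$. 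In the focusing case the absolute value is needed, but the same bounds apply termwise.

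\textbf{Main obstacle.} The key step is the uniform bound on $v$ in Step 3. The linear flow already has exactly the rate $e^{-bt}$, so no rate is lost there. What must be checked is that the Duhamel term stays bounded in $D(H^{1/2})$ globally. I would handle this with the time-weighted estimate
\[
\int_T^\infty e^{-(p-1)bt}\,\|v\|_{L^r}^{p-1}\|H^{1/2}v\|_{L^r}\,dt\ \text{(in the dual Strichartz norm)},
\]
where the weight $e^{-(p-1)bt}$ compensates for possible growth of $v$. A continuity argument then closes the bootstrap, with the a priori bound from mass decay and energy monotonicity (defocusing case) or from the coercivity condition (focusing case) controlling $\|H^{1/2}u\|_{L^2}$ on each bounded interval.
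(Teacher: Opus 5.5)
Your argument is correct in substance. It reaches the key bound $\sup_{t\ge0}e^{bt}\|H^{1/2}u(t)\|_{L^2}<\infty$ by a different mechanism from the paper's, and one step needs to be stated more carefully.

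\textbf{The paper's route.} The paper reads the corollary off the proof of Theorem~\ref{thm-abstract}. There, the weighted energy $e^{2bt}E(u(t))$, corrected by a time integral of $e^{2bs}\|u(s)\|_{L^{p+1}}^{p+1}$, is shown to be constant. In your sign convention this reads $\frac{d}{dt}\big(e^{2bt}E(u(t))\big)=-\mu b\frac{p-1}{p+1}e^{2bt}\|u(t)\|_{L^{p+1}}^{p+1}$. This bounds $e^{bt}q(u(t))^{1/2}$ immediately in the defocusing case, and via \eqref{eq:GN-abstract}, mass decay and Gronwall in the focusing case. Mass decay and Gagliardo--Nirenberg then finish exactly as in your Step~4. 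That route needs no dispersive estimates at all; in the defocusing case it even gives $0\le E(u(t))\le e^{-2bt}E(u_0)$ in one line.

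\textbf{Your route.} You rescale to $v=e^{bt}u$ and run a Strichartz bootstrap on $[T,\infty)$. This uses the full global Strichartz and chain-rule hypotheses. In return, it shows that the sharp rate $e^{-2bt}$ can be recovered from the bare conclusion of Theorem~\ref{thm-abstract}, whatever $\gamma$ is. Your Step~2 observation, that the scattering statement alone only gives rate $\min(b,\gamma)$, is exactly right.

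\textbf{The step to fix.} The weight $e^{-(p-1)bt}$ does not by itself absorb growth of $v$. What closes the bootstrap is the identity $e^{-(p-1)bt}\|v\|_{L^r}^{p-1}=\|u\|_{L^r}^{p-1}$, together with $\|u\|_{L^{q_1}_tL^r_x([T,\infty))}\to0$ as $T\to\infty$. This makes the Duhamel bound linear in $v$ with a small coefficient. That smallness requires a global-in-time bound on $\|u(t)\|_{D(H^{1/2})}$; bounds on each bounded interval are not enough.
\begin{itemize}
\item In the defocusing case, energy monotonicity supplies this bound. Combined with $\|u(t)\|_{L^2}=e^{-bt}\|u_0\|_{L^2}$ and \eqref{eq:GN-abstract}, it makes $\|u(t)\|_{L^{p+1}}$ decay exponentially.
\item In the focusing case, the coercivity condition alone does not supply it, since the energy is not monotone there. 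Take it instead from Theorem~\ref{thm-abstract} itself, which gives $\|u(t)\|_{D(H^{1/2})}\lesssim e^{-\min(b,\gamma)t}$.
\end{itemize}
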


	For the purpose of proving exponential scattering, we restrict ourselves
			to constant damping.

	%\begin{remark}
	%    With some minor change, one can prove the full energy sub-critical regime in the focusing case for small initial data. However, this will not be our main aim, since we focus on the defocusing case.
	%\end{remark}
	\begin{remark}
		
		Similar to \cite{Saanouni}, one can generalize the constant damping operator to time-dependent case $a(t)$. Introducing $\mathbf B(t)=\int_0^tb(s)\,ds$ and a minimal damping $\underline{b}:=\inf\limits_{t>0}\frac{b(t)}{t}$, following a similar strategy, one can prove the exponential scattering in the following sense 
		\begin{align*}
			\lim_{t\to+\infty}\big\|e^{\mathbf B(t)}u(t)-e^{itH}u_+\big\|_{D(H^\frac12)}=0.
		\end{align*}
		If one considers $\mathcal B=b(x)I$, one can  expect the classical scattering holds as shown in \cite{LS25}. 
	\end{remark}

The sharpness of the $s<1$ limitation is illustrated by Theorem \ref{thm-abstract}, where the non-trapping hypothesis is used to ensure the validity of the Strichartz estimate, which yields the full $H^1$-scattering. This dichotomy highlights the genuine obstruction created by trapped geodesics under variable coefficients.

For comparison, we  records the detailed  proof  of  exponential scattering theorem for nonlinear Schr\"odinger equations with constant damping and a general positive Hamiltonian, i.e. Theorem \ref{thm-abstract}. Under global Strichartz estimates and suitable nonlinear estimates, that result yields scattering in the energy space $D(H^{1/2})$, which agrees with $H^1$ for the standard second-order examples considered there. It does not contain Theorems~\ref{T:gwp}--\ref{T:main}: the abstract framework requires global Strichartz estimates without loss of derivatives and therefore excludes the trapping metrics treated in the main body. Its role is to complement the 
localized-damping result and to contrast the two long-time mechanisms.

The paper is organized as follows. In Section~\ref{sec:Striwellpod}, we first establish the equivalence of three Sobolev norms generated by $\Delta$, $\Delta_A$ and $\Delta_G$, respectively. Also, we give the local-in-time Strichartz estimate (with loss of derivatives) for the variable coefficient magnetic Schr\"odinger equation. Using these tools, we then show the local well-posedness of \eqref{equ:DMNLS} in $H^{1+\varepsilon}$. 

In Subsection 2.1, using the uniform bound for the energy $E(u(t))$ established in Subsection \ref{subsec:UnienerLoc}, we prove the global well-posedness which relies on the argument by Burq-G\'erard-Tzvetkov \cite{BGT}. Using the multiplier method by Fanelli-Vega \cite{Fanelli-Vega}, we prove the uniform energy bound in Subsection \ref{subsec:UnienerLoc}. 

In Section \ref{sec:intMor}, we prove the interaction Morawetz estimate involving the damping term and show the scattering. Due to the trapping effect brought by $G$, there is no global-in-time estimate. To overcome this difficulty, we introduce the cut-off function that decompose the solution to $u=\chi u+(1-\chi)u$. For the compactly supported part $\chi u$, we establish the local energy decay to handle it.  In particular, we show that $\chi u\to0$ in $H^s(\R^3)$ with $s<1$. For the remaining part $(1-\chi)u$, it satisfies the cubic nonlinear Schr\"odinger equation with additional external terms involving magnetic and damping potentials $A$ and $ia$, respectively,  and the commutator $[\Delta,\chi]$. Using local smoothing and Strichartz estimates for the free Schr\"odinger group $e^{it\Delta}$ and a perturbation argument, we first prove scattering in $L^2$ and then improve it to any $H^{1-\varepsilon}$ with $\varepsilon>0$ by interpolating with the uniform energy estimate.

Appendix~\ref{appendix_semiclassical} contains the semiclassical functional-calculus details used in Section~\ref{sec:Striwellpod}. Appendix~\ref{appendix:abstract} contains the complementary constant-damping theorem, examples, and proof.

\subsection{Notation}
\label{section_notation}
Define
\begin{align*}%\label{equ:notatFjk}
	%	DA&:=(\partial_jA_k)_{j,k=1}^3,\\
	D_j&:=\pa_j-iA_j,\quad 1\leq j\leq3,\\%\label{equ:notaDelA}
	\Delta_A&:=\nabla_A\cdot\nabla_A=\Delta-2iA\cdot\nabla-i\div A-|A|^2.
\end{align*}
%	Then, it is easy to check that	\begin{align}\label{equ:deride1}		\pa_j(u\bar{v})&=(D_ju)\bar{v}+u\overline{D_jv},\\\label{equ:deride2}		D_jD_k&=iB_{jk}+D_kD_j,\\\label{equ:deride3}		D_j(uv)&=(D_ju)v+u\pa_jv,\,\nabla_A(uv)=(\nabla_Au)v+u\nabla v. 	\end{align}

Let $S^m$ be H\"ormander's symbol class defined by
$$
S^m=\{a\in C^\infty(\R^3\times \R^3)\ |\ |\partial_x^\alpha\partial_\xi^\beta a(x,\xi)|\le C_{\alpha\beta}\langle\xi\rangle^{m-|\beta|}\ \text{for any}\ \alpha,\beta\in \Z_+^3\},
$$
where $m\in \R$ and $\<\xi\>=\sqrt{1+|\xi|^2}$. For $a\in S^m$ and a small parameter $h\in (0,1]$, we associate the semiclassical pseudodifferential operator ($h$-PDO for short): 
$$
a(x,hD)f(x)=\frac{1}{(2\pi h)^3}\int_{\R^6}e^{i(x-y)\cdot\xi/h} a(x,\xi)f(y)dyd\xi.
$$
We refer to monographs \cite{Hormander,DiSj,Stein} for a detailed description on the pseudodifferential operator. 

%%%%%%%%%%%%%%%%%%%%%%%%%%%%%%%%%%%%%%%%%%%%%%%%%%%%%%%%%%%%%%%%%%

%%%%%%%%%%%%%%%%%%%%%%%%%%%%%%%%%%%%%%%%%%%%%%%%%%%%%%%%%%%%%%%%%%

\section{Strichartz estimate and well-posedness}\label{sec:Striwellpod}

In this section, we prove Theorem~\ref{T:gwp}.

To this end, we first recall the equivalence of Sobolev spaces generated by $\Delta$, $\Delta_A$, and $\Delta_G$, and the corresponding Strichartz estimates for the propagator $e^{it\Delta_G}$, and then establish local well-posedness.  %Then, we utilize the Virial argument to show the uniform bound of the energy and local energy decay.  Thus, we obtain the global well-posedness.

\begin{definition}
	For $s\in \R$ and $1<p<\infty$, the spaces $W^{s,p}$, $W^{s,p}_A$, and $W^{s,p}_G$ denote the Sobolev spaces of order $s$ generated by $\Delta$, $\Delta_A$, and $\Delta_G$, respectively. More precisely, they are defined as the completions of $C_c^\infty(\R^3)$ with respect to the following norms
	\begin{align*}
		\|f\|_{W^{s,p}(\R^3)} & =\|(1-\Delta)^\frac s2f\|_{L^p(\R^3)},\\
		\|f\|_{W^{s,p}_A(\R^3)} & =\|(1-\Delta_A)^\frac s2f\|_{L^p(\R^3)},\\
		\|f\|_{W^{s,p}_{G}(\R^3)} &=\|(1-\Delta_{G})^\frac{s}{2}f\|_{L^p(\R^3)}.
	\end{align*}
	We also use the standard notation $H^s=W^{s,2}$, $H_A^s=W_A^{s,2}$, and $H_G^s=W_G^{s,2}$.
	%		Let $s\geq0$, we denote by $H^s$, $H^s$ and $H_{G}^s$ by the completions of 
\end{definition}
Now, we have the equivalence of these Sobolev norms as follows:
\begin{lemma}
	\label{lemma_2_2}
	For any $s\in \R$ and any $1<p<\infty$, the spaces $W^{s,p}$, $W_A^{s,p}$, and $W_G^{s,p}$ coincide with equivalent norms.
	%		For $s\in[0,2]$, we have the following equivalence of Sobolev norms:		\begin{align*}			\|f\|_{{H}^s_{G}}\simeq\|f\|_{{H}_{A}^s}\simeq\|f\|_{H^s}. 		\end{align*}
	%		In particular, we have		\begin{align*}			\|f\|_{{H}_{G}^1}\simeq\| f\|_{{H}_A^1},\,\,\|\langle\Delta_{G}\rangle f\|_{L^2}\simeq\|\langle \Delta_A\rangle f\|_{L^2}.		\end{align*}
\end{lemma}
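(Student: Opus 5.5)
We plan to prove Lemma~\ref{lemma_2_2} by showing that, for every $s\in\R$ and $1<p<\infty$, the operators
\[
(1-\Delta_A)^{s/2}(1-\Delta)^{-s/2},\qquad (1-\Delta)^{s/2}(1-\Delta_A)^{-s/2},
\]
and the analogous operators with $\Delta_G$ in place of $\Delta_A$, are bounded on $L^p(\R^3)$. Equivalence with $W^{s,p}$ then gives equivalence among all three spaces.

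The underlying fact is that $-\Delta_A$ and $-\Delta_G$ are second-order elliptic differential operators. Their coefficients are smooth with all derivatives bounded, because $\partial^\alpha A\in L^\infty$ and $G-I\in C_c^\infty$. Their principal symbols are $|\xi|^2$ and $G(x)\xi\cdot\xi$ respectively, and the latter is uniformly elliptic by \eqref{equ:Gpd}. Also, $1-\Delta_A$ and $1-\Delta_G$ are nonnegative self-adjoint operators shifted by $1$. The full symbol of $1-\Delta_G$ is $1+G(x)\xi\cdot\xi$ plus lower-order terms in $S^1$. These terms involve $A$, $\nabla G$, $|A|^2$ and $\div A$, and all are bounded together with their derivatives.

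\textbf{Step 1: functional calculus.} We show that for each $s\in\R$, $(1-\Delta_G)^{s/2}$ is a pseudodifferential operator in $\mathrm{Op}(S^s)$. Its principal symbol is $(1+G(x)\xi\cdot\xi)^{s/2}$, and the remainder lies in $\mathrm{Op}(S^{s-1})$. The same holds for $\Delta_A$, which is the case $G=I$.

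\begin{itemize}
\item For integer powers this is immediate.
\item For general $s$, we build a parametrix for the resolvent $(z-(1-\Delta_G))^{-1}$ with symbols uniform in $z$ off $[1,\infty)$. Then we insert it into the Helffer--Sjöstrand formula, or a Cauchy integral for negative powers and compose with integer powers otherwise. This shows $\varphi(h^2(1-\Delta_G))$ and $(1-\Delta_G)^{s/2}$ are $\Psi$DOs with the stated symbols.
\item The semiclassical version of this calculus, with $h$-PDOs as in Section~\ref{section_notation}, is what the appendix on semiclassical functional calculus supplies. We take that as the tool.
\end{itemize}

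\textbf{Step 2: $L^p$ boundedness.} Given Step 1, the composition $(1-\Delta_G)^{s/2}(1-\Delta)^{-s/2}$ belongs to $\mathrm{Op}(S^0)$. By the Calderón--Zygmund theory for $S^0$ symbols (see \cite{Stein}), it is bounded on $L^p$ for $1<p<\infty$. The reverse composition $(1-\Delta)^{s/2}(1-\Delta_G)^{-s/2}$ is handled the same way, using that $(1-\Delta_G)^{-s/2}\in\mathrm{Op}(S^{-s})$. Since both compositions are bounded, the norms are equivalent on $C_c^\infty$, so the completions coincide.

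\textbf{Main obstacle.} The difficulty lies in Step 1: showing that complex or fractional powers of a non-constant-coefficient magnetic operator are honest $\Psi$DOs with uniformly controlled symbol seminorms on all of $\R^3$, not only locally. Two features make this delicate. First, $A$ need not decay, although it is bounded with all derivatives, so no compactness is available. Second, the remainder terms in the Helffer--Sjöstrand integral must be shown to be bounded $L^p\to L^p$, not merely $L^2\to L^2$.

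\begin{itemize}
\item Our first approach is to carry the parametrix to sufficiently high order $N$, so that the error is in $\mathrm{Op}(S^{s-N})$ with $N>3$. Its kernel is then bounded and integrable off the diagonal, with decay coming from ellipticity.
\item If that does not suffice, we fall back on $L^2$-based kernel estimates, namely Gaussian heat kernel bounds for $e^{t\Delta_G}$. These hold by diamagnetic domination and uniform ellipticity, and they yield $L^p$ bounds for the remainders.
\item For $s$ in a bounded range, interpolation between integer $s$ (complex interpolation for imaginary powers, bounded by the heat-kernel/spectral-multiplier theorem) gives an alternative route.
\end{itemize}
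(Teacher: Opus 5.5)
Your proposal is correct and follows the paper's argument. The paper also places $(1-\Delta_G)^{s/2}(1-\Delta)^{-s/2}$, $(1-\Delta)^{s/2}(1-\Delta_G)^{-s/2}$ and their $\Delta_A$ analogues in $\Psi^0$ via the standard elliptic complex-power calculus, then uses $L^p$-boundedness of order-zero operators and passes to completions; it simply cites \cite{Hormander,DiSj} for your Step 1 instead of sketching it.

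One caution: the appendix does not actually supply Step 1. Proposition~\ref{proposition_appendix_2} treats only $\varphi\in C_c^\infty$, and Lemma~\ref{lemma_CGT_1} itself invokes the present lemma. Step 1 should therefore rest on the Seeley-type resolvent parametrix together with a Beals commutator argument for the remainder, as in the proof of Proposition~\ref{proposition_appendix_2}. Ellipticity alone does not give the off-diagonal kernel decay you need for $L^p$ bounds.
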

\begin{proof}
Let
\[
P_0=1-\Delta,\qquad P_A=1-\Delta_A,\qquad P_G=1-\Delta_G.
\]
These are uniformly elliptic second-order operators with smooth bounded coefficients and positive principal symbols. Standard elliptic pseudodifferential calculus for complex powers gives, for every $s\in\mathbb R$,
\[
P_G^{s/2}P_0^{-s/2},\quad P_0^{s/2}P_G^{-s/2},\quad
P_A^{s/2}P_0^{-s/2},\quad P_0^{s/2}P_A^{-s/2}\in\Psi^0.
\]
Their symbols belong to $S^0$, and hence the corresponding operators are bounded on $L^p(\mathbb R^3)$ for every $1<p<\infty$ by the standard order-zero pseudodifferential multiplier theorem. Therefore, for $f\in C_c^\infty(\mathbb R^3)$,
\[
\|P_G^{s/2}f\|_{L^p}\simeq\|P_0^{s/2}f\|_{L^p},
\qquad
\|P_A^{s/2}f\|_{L^p}\simeq\|P_0^{s/2}f\|_{L^p}.
\]
Passing to the completions proves the assertion. We refer to the standard elliptic complex-power calculus in \cite{Hormander,DiSj}.
\end{proof}

%remark
\begin{remark}
	In view of this lemma, we do not distinguish between these three spaces $W^{s,p}$, $W^{s,p}_A$ and $W^{s,p}_G$ (if $p\neq1,\infty$) in the following argument. 
\end{remark}

%We have the following Strichartz estimate. %For a general variable coefficient electric magnetic Schr\"odinger operator $\Delta_G$ with	\begin{gather*}		|\partial_x^\alpha(G(x)-Id)|\leq C_\alpha\langle x\rangle^{-\mu-|\alpha|},\\		|\partial_x^\alpha A_j(x)|\leq C_\alpha\langle x\rangle^{1-\mu-|\alpha|},	\end{gather*}where $\alpha$ is the multi-index and $\mu>0$, Mizutani \cite{Miz13} established the following global-in-space Strichartz estimates with losses of  derivatives.

We next state the Strichartz estimates used in the paper.

\begin{proposition}[Local-in-time Strichartz estimates for the propagator $e^{it\Delta_G}$]\label{prop:Striest}
	Let $s\in \R$ and $(q,r)\in\Lambda_0:=\big\{q,r\geq2\ |\ \tfrac2q=\tfrac32-\tfrac3r\big\}$. Then, for any $T>0$, there holds
	\begin{align}\label{equ:Strihom}
		\|e^{it\Delta_G}f\|_{L_t^q([0,T], W^{s,r}(\R^3))}&\lesssim \<T\>^{1/q} \|f\|_{H^{s+1/q}(\R^3)},\\\label{equ:Striinhom}
		\left\| \int_0^t e^{i(t-\tau)\Delta_G}F(\tau,x)\,d\tau\right\|_{L_t^q([0,T], W^{s,r}(\R^3))}&\lesssim \<T\>^{1/q}\|F\|_{L_t^1([0,T], H^{s+1/q}(\R^3))},
	\end{align}
	where the implicit constants are independent of $T,f$ and $F$. 
\end{proposition}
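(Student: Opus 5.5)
The plan is the standard Burq--Gérard--Tzvetkov argument: prove Strichartz estimates without loss on time intervals of length $h$ for data localized at frequency $h^{-1}$, then sum over unit-time and dyadic pieces. Throughout we use Lemma~\ref{lemma_2_2} to move freely between $W^{s,r}$ and $W^{s,r}_G$. Since $e^{it\Delta_G}$ commutes with $(1-\Delta_G)^{s/2}$, this reduces matters to $s=0$.

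\textbf{Step 1: Littlewood--Paley decomposition.} Take a dyadic partition of unity $1=\varphi_0(\lambda)+\sum_{h=2^{-k}}\varphi(h^2\lambda)$, with $\varphi\in C_c^\infty((1/2,2))$, and set $f_h=\varphi(-h^2\Delta_G)f$. Appendix~\ref{appendix_semiclassical} shows that $\varphi(-h^2\Delta_G)$ is an $h$-PDO modulo $O(h^\infty)$ errors. Its symbol is supported in $\{|\xi|\sim1\}$, since the magnetic terms are lower order in the semiclassical scaling. Using this, one gets the square-function estimate
\[
\|u\|_{L^r}\lesssim \|u\|_{L^2}+\Big(\sum_h\|\varphi(-h^2\Delta_G)u\|_{L^r}^2\Big)^{1/2}
\]
for $2\le r<\infty$. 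This follows from the $L^p$ boundedness of the $h$-PDOs together with a standard Rademacher argument, or from Gaussian heat-kernel bounds for $\Delta_G$. Because $r\ge2$, Minkowski's inequality moves the $\ell^2$ sum outside the $L^q_t$ norm. It therefore suffices to prove, for each $h$, the estimate
\[
\|e^{it\Delta_G}f_h\|_{L^q([0,T];L^r)}\lesssim \<T\>^{1/q}h^{-1/q}\|f_h\|_{L^2}.
\]

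\textbf{Step 2: semiclassical dispersive estimate (main obstacle).} For $|t|\le \delta h$ with $\delta$ small, I would build a WKB/Isozaki--Kitada parametrix for $e^{it\Delta_G}\chi(x,hD)$, where $\chi$ is a frequency cutoff near $|\xi|\sim1$. The semiclassical time $t/h$ is then $O(\delta)$, so the Hamilton flow of $G(x)(\xi-hA)\cdot(\xi-hA)$ is a small perturbation of the identity. The phase function solves the eikonal equation with no caustics, and stationary phase gives
\[
\|e^{it\Delta_G}\chi(x,hD)\chi(x,hD)^*\|_{L^1\to L^\infty}\lesssim |t|^{-3/2},\qquad 0<|t|\le\delta h .
\]
This is exactly where trapping is harmless, since only times short relative to the frequency scale are used. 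The magnetic potential enters only through $hA$ in the symbol. It is controlled because every derivative $\partial^\alpha A$ is bounded, as guaranteed by Assumption~\ref{assum:Assumpcond}. This construction is the most technical part, and for it I would invoke the parametrix of Mizutani \cite{Miz13}.

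\textbf{Step 3: summation.} The $TT^*$ argument of Keel--Tao, combined with $L^2$ unitarity, turns the dispersive bound into Strichartz estimates without loss on each interval of length $\delta h$. The $O(h^\infty)$ remainders are absorbed by Sobolev embedding. We then split $[0,T]$ into about $\<T\>h^{-1}$ such intervals and sum the $q$-th powers. Because the flow is unitary, each interval contributes at most $\|f_h\|_{L^2}^q$, so the total is at most $C\<T\>h^{-1}\|f_h\|_{L^2}^q$. This gives the factor $\<T\>^{1/q}h^{-1/q}$, i.e. a loss of $1/q$ derivative. Summing in $h$ by Step~1 proves \eqref{equ:Strihom}. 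For \eqref{equ:Striinhom}, Minkowski's inequality in $\tau$ gives
\[
\Big\|\int_0^t e^{i(t-\tau)\Delta_G}F(\tau)\,d\tau\Big\|_{L^q_t W^{s,r}}\le\int_0^T\big\|e^{it\Delta_G}\big(e^{-i\tau\Delta_G}F(\tau)\big)\big\|_{L^q_tW^{s,r}}\,d\tau .
\]
Applying \eqref{equ:Strihom} to each integrand, and using that $e^{-i\tau\Delta_G}$ is unitary on $H^{s+1/q}_G$, yields the bound $\<T\>^{1/q}\|F\|_{L^1_tH^{s+1/q}}$.
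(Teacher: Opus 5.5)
Your argument is correct, but it is more self-contained than the paper's. The paper does not reprove the loss-of-$1/q$ estimate. It quotes \cite[Theorems 1.2 and 1.5]{Miz13} for the unit-time bound $\|e^{it\Delta_G}f\|_{L^q(I,L^r)}\lesssim\|f\|_{H^{1/q}}$ with $|I|=1$. It then splits $[0,T]$ into $N\sim\langle T\rangle$ unit intervals and sums $q$-th powers, using unitarity on $H^{1/q}_G$ and Lemma~\ref{lemma_2_2}. The inhomogeneous term is handled by Minkowski's inequality exactly as you do.

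You instead rebuild the unit-time estimate in the Burq--G\'erard--Tzvetkov way. Your count of $\langle T\rangle h^{-1}$ intervals of length $\delta h$ yields the derivative loss $h^{-1/q}$ and the growth $\langle T\rangle^{1/q}$ in a single step. The paper's route takes two lines and inherits the generality of Mizutani's theorem. Yours shows where the loss comes from, but it needs an ingredient the paper never uses: a Littlewood--Paley square-function estimate for $\Delta_G$ on $\R^3$. Your sketch of that estimate (via the $h$-PDO expansion of Proposition~\ref{proposition_appendix_2}, a Rademacher argument, and duality with a reproducing cutoff $\tilde\varphi\varphi=\varphi$) is adequate. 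The semiclassical dispersive bound you attribute to \cite{Miz13} is already proved in Appendix~\ref{appendix_semiclassical}, in the proof of Lemma~\ref{lemma_CGT_2}. So your route could be made entirely internal to the paper.

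Two small corrections. First, the $TT^*$ step needs the frequency cutoff to commute with the propagator. You should therefore estimate $e^{it\Delta_G}\varphi(-h^2\Delta_G)^2=\varphi(-h^2\Delta_G)e^{it\Delta_G}\varphi(-h^2\Delta_G)$ and replace only the right-hand factor by an $h$-PDO, as the appendix does. With a generic $h$-PDO $\chi(x,hD)$ and $U(t)=e^{it\Delta_G}\chi(x,hD)$, the operator $U(t)U(s)^*=e^{it\Delta_G}\chi\chi^*e^{-is\Delta_G}$ is not a function of $t-s$. Second, pulling the $\ell^2_h$ sum outside the $L^q_t$ norm uses $q\ge2$, not $r\ge2$. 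The condition $r\ge2$ is what you need to pass from the square function to $\bigl(\sum_h\|\varphi(-h^2\Delta_G)u\|_{L^r}^2\bigr)^{1/2}$.
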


%proof
\begin{proof}
	%By Lemma \ref{lemma_2_2}, $W^{s,r}$ and $H^{s+1/q}$ can be replaced by $W^{s,r}$ and $H^{s+1/q}$, respectively. 
	Since $e^{it\Delta_G}$ commutes with $(1-\Delta_G)^{s/2}$, taking Lemma \ref{lemma_2_2} into account, it  suffices to prove the case $s=0$. Assume $T>1$ without loss of generality, and  decompose $[0,T]=I_1\cup\cdots\cup I_N$ with $|I_j|=1$ for all $j$ and $N\sim [T]$. It follows from \cite[Theorems 1.2 and 1.5]{Miz13} that
	\begin{align*}
		\|e^{it\Delta_G}f\|_{L_t^q([0,T], L^r)}^q
		&\le \sum_{j=1}^N \|e^{it\Delta_G}f\|_{L_t^q(I_j, L^r)}^q
		\lesssim N	\|f\|_{H^{1/q}}^q
	\end{align*}
	and \eqref{equ:Strihom} follows. By Minkowski's inequality and \eqref{equ:Strihom}, we also obtain
	\begin{align*}
		%\label{equ:Striinhom}
		\left\| \int_0^t e^{i(t-\tau)\Delta_G}F(\tau,\cdot)\,d\tau\right\|_{L_t^q([0,T], L^r)}
		&\lesssim \int_0^T\|e^{i(t-\tau)\Delta_G}F(\tau,\cdot)\|_{L_t^q([0,T], L^r)}d\tau\\
		&\lesssim T^{1/q}\int_0^T\|e^{-i\tau\Delta_G}F(\tau,\cdot)\|_{H^{1/q}}d\tau\\
		&\lesssim T^{1/q}\|F\|_{L_t^1([0,T], H^{1/q})},
	\end{align*}
	where we used the fact that $e^{-i\tau\Delta_G}$ preserves the $H_G^{1/q}$-norm and Lemma \ref{lemma_2_2}. 
	%For general $T>0$ we \eqref{equ:Strihom} and \eqref{equ:Striinhom} follow by first decThe case $s=0$ was proved by  which also implies general cases $s>0$ thanks to Lemma \ref{lemma_2_2} and the fact that $e^{it\Delta_G}$ is  isometric on $W^{s,p}_G$. 
\end{proof}

%remark
\begin{remark}
	The loss of $1/q$ derivatives cannot be removed in general without the non-trapping condition on the metric $G(x)$. This is the reason that we solve \eqref{equ:DMNLS} in $H^{1+\epsilon}$ instead of the energy space $H^1$. 
\end{remark}

%\begin{color}{blue}[Mizutani: Since we don't need Strichartz estimates for $e^{it\Delta_A}$, I simply removed it]\end{color}

As a consequence of Proposition \ref{prop:Striest}, we obtain the local well-posedness of \eqref{equ:DMNLS}.

\begin{proposition}[Local well-posedness]\label{prop:LWP}
	Let $0<\epsilon<\frac12$, $2< q<\frac2{1-2\epsilon}$ and $u_0\in H^{1+\epsilon}(\R^3)$. Then there exist $T>0$ and a unique solution $u\in C([0,T], H^{1+\epsilon}(\R^3))\cap L^q([0,T], L^\infty(\R^3))$ to \eqref{equ:DMNLS} satisfying the following statements: 
	\begin{itemize}
		\item The map $u_0\in H^{1+\epsilon}(\R^3)\mapsto u\in C([0,T], H^{1+\epsilon}(\R^3))$ is continuous. 
		\item If we denote by $T_{\rm max}$ the maximal forward time of existence, then  either  $T_{\rm max}=+\infty$ or $T_{\rm max}<+\infty$ and
		\begin{equation*}%\label{equ:blowcri}
			\lim_{t\to T_{\rm max}}\|u(t)\|_{H^{1+\epsilon}(\R^3)}=+\infty.
		\end{equation*}
		\item If $u_0\in H^r(\R^3)$ for some $r>1+\epsilon$, then $u\in C([0,T],H^r(\R^3))$. 
	\end{itemize}
\end{proposition}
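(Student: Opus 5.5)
We will run a contraction argument for the Duhamel formulation
\[
u(t)=e^{it\Delta_G}u_0-i\int_0^t e^{i(t-\tau)\Delta_G}\big(|u|^2u+ia\,u\big)(\tau)\,d\tau ,
\]
in the space
\[
X_T=C([0,T];H^{1+\epsilon})\cap L^q([0,T];W^{1+\epsilon-1/q,r}),
\qquad \frac2q=\frac32-\frac3r .
\]
Since $q>2$, we have $r<6$. Since $q<\frac{2}{1-2\epsilon}$, we get $\frac1q>\frac12-\epsilon$, and hence
\[
\Big(1+\epsilon-\tfrac1q\Big)-\tfrac3r=\epsilon+\tfrac1q-\tfrac12>0 .
\]
Sobolev embedding therefore gives $W^{1+\epsilon-1/q,r}\hookrightarrow L^\infty$. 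Consequently $X_T\subset L^q([0,T];L^\infty)$, which yields the $L^qL^\infty$ part of the statement.

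For the linear estimates we apply Proposition~\ref{prop:Striest} with $s=1+\epsilon-1/q$, for which $s+1/q=1+\epsilon$. The pair $(\infty,2)$ is handled by the $H^{1+\epsilon}_G$-isometry together with Lemma~\ref{lemma_2_2}. These give
\[
\|u\|_{X_T}\lesssim \|u_0\|_{H^{1+\epsilon}}+\|F\|_{L^1_TH^{1+\epsilon}},
\qquad 0<T\le1 .
\]
For the nonlinearity we use the fractional Leibniz rule in the form $\||u|^2u\|_{H^{1+\epsilon}}\lesssim\|u\|_{L^\infty}^2\|u\|_{H^{1+\epsilon}}$. The damping term satisfies $\|au\|_{H^{1+\epsilon}}\lesssim\|u\|_{H^{1+\epsilon}}$ because $a\in C_c^\infty$. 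Hölder in time with $q>2$ then gives
\[
\|F\|_{L^1_TH^{1+\epsilon}}\lesssim T^{1-2/q}\|u\|_{L^q_TL^\infty}^2\|u\|_{L^\infty_TH^{1+\epsilon}}+T\|u\|_{L^\infty_TH^{1+\epsilon}} .
\]
The difference estimates are analogous and are carried out in the weaker norm $L^\infty_TL^2\cap L^q_TL^\infty$, or simply in $X_T$. Since $T^{1-2/q}\to0$, a contraction on a ball of $X_T$ follows for $T=T(\|u_0\|_{H^{1+\epsilon}})$. Uniqueness in $C_TH^{1+\epsilon}\cap L^q_TL^\infty$ follows by the same difference estimate at the $L^2$ level, using $\||u|^2u-|v|^2v\|_{L^2}\lesssim(\|u\|_\infty^2+\|v\|_\infty^2)\|u-v\|_{L^2}$. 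The damping term is harmless and could even be absorbed into the propagator.

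Continuous dependence comes from the standard Lipschitz bound of the fixed-point map, combined with Strichartz estimates at the same regularity. Because $T$ depends only on $\|u_0\|_{H^{1+\epsilon}}$, iterating the argument gives the blow-up alternative. For persistence of regularity with $u_0\in H^r$, $r>1+\epsilon$, we repeat the estimate in $H^r$ using
\[
\||u|^2u\|_{H^r}\lesssim\|u\|_{L^\infty}^2\|u\|_{H^r}.
\]
This is linear in the high norm, so a Gronwall-type argument on the same interval $[0,T]$ shows that the $H^r$ norm stays finite.

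The main obstacle is choosing exponents so that the derivative loss $1/q$ from Proposition~\ref{prop:Striest} is still compatible with the $L^\infty$ embedding. This is exactly what the constraint $q<2/(1-2\epsilon)$ guarantees. A secondary point is justifying the fractional Leibniz estimates in the $\Delta_G$-based spaces, which is done by passing to Euclidean norms via Lemma~\ref{lemma_2_2}.
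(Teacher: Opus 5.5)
Your proposal matches the paper's proof. Both use the same space $X_T$ with $\frac1r=\frac12-\frac{2}{3q}$, apply the lossy Strichartz estimate of Proposition~\ref{prop:Striest} at regularity $1+\epsilon-\frac1q$ together with the embedding into $L^\infty$, and close a contraction with the estimate $\||u|^2u\|_{H^s}\lesssim\|u\|_{L^\infty}^2\|u\|_{H^s}$ and the factor $T^{1-2/q}$; your $L^2$-Gronwall uniqueness and persistence arguments fill in what the paper calls standard. One small caution: do the difference estimate in $X_T$, as the paper does (or in $L^\infty_TL^2$ alone), because an $L^2$-level estimate cannot control $\|u-v\|_{L^q_TL^\infty}$.
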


\begin{proof}%We follow the same argument as in \cite[Proposition 3.1]{BGT}. 
	%By Lemma \ref{lemma_2_2}, $H^{1+\epsilon}$ can be replaced by $H^{1+\epsilon}$. 
	Set $s=1+\epsilon$ and $\frac1r=\frac12-\frac{2}{3q}$ so that $\frac2q=\frac32-\frac3r$ and $s-\frac1q>\frac3r$. 
	%\begin{align}\label{prop_LWP_proof_1}W^{s-\frac1q,r}(\R^3)\hookrightarrow L^\infty(\R^3).\end{align}
	Define		$$\Phi(u)=e^{it\Delta_G}u_0-i\int_0^t e^{i(t-s)\Delta_G}\big(iau-|u|^{2}u\big)(s)\,ds$$
	on $X_T=C([0,T], H^s(\R^3))\cap L^q([0,T], W^{s-\frac1q,r}(\R^3))$ equipped with 
	$$
	\|u\|_{X_T}=\|u\|_{L^\infty([0,T],H^s)}+\|u\|_{L^q([0,T],W^{s-1/q,r}_G)}. 
	$$
	We show that $\Phi$ is a contraction on
	$B=\big\{u\in X_T\ |\ \|u\|_{X_T}\leq M\|u_0\|_{H^s(\R^3)}\big\}$, 
	where $M$ is specified later. For $u\in X_T$, Proposition \ref{prop:Striest} implies
	$$
	\|\Phi(u)\|_{X_T}\le (1+C\<T\>^{\frac1q})(\|u_0\|_{H^s}+\|iau-|u|^{2}u\|_{L^1([0,T], H^s)}). 
	$$
	Using the standard nonlinear estimate (see e.g. \cite[Lemma 3.4]{GOV1994})
	\begin{align}
		\label{prop_LWP_proof_2}
		\||u|^2u\|_{H^s}\lesssim \|u\|_{L^\infty}^2\|u\|_{H^s},\quad 0\le s<2, 
	\end{align}
	the embedding $W^{s-\frac1q,r}(\R^3)\hookrightarrow L^\infty(\R^3)$ and H\"older's inequality, we have
	\begin{align*}
		&\big\|iau-|u|^{2}u\big\|_{L^1([0,T], H^s)}\\
		&\lesssim T\|a\|_{W^{2,\infty}}\|u\|_{L^\infty([0,T],H^s)}+T^{1-\frac2q}\|u\|_{L^q([0,T],L^\infty)}^{2}\|u\|_{L^\infty([0,T],H^s)}\\
		&\lesssim T\|u\|_{X_T}+T^{1-\frac2q}\|u\|_{X_T}^3\\
		&\lesssim TM\|u_0\|_{H^s}+T^{1-\frac2q}M^3\|u_0\|_{H^s}^3
	\end{align*}
	and hence, with some $C>0$ independent of $T$ and $u_0$, 
	\begin{align*}
		\|\Phi(u)\|_{X_T}\le (1+C\<T\>^{\frac1q})(1+CTM+CT^{1-\frac2q}M^3\|u_0\|_{H^s}^2)\|u_0\|_{H^s}.
	\end{align*}
	By Proposition \ref{prop:Striest}, we similarly get for $u,v\in X_T$, 
	\begin{align*}
		\|\Phi(u)-\Phi(v)\|_{X_T}%&=\Big\|\int_0^t e^{i(t-s)\Delta_G}\big(ia(u-v)-(|u|^2u-|v|^2v)\big)(s)\,ds\Big\|_{X_T}\\
		&\le (1+C\<T\>^{\frac1q})\|ia(u-v)-(|u|^2u-|v|^2v)\|_{L^1([0,T], H^s(\R^3))}\\
		&\le (1+C\<T\>^{\frac1q})\{CT+C_2T^{1-\frac{2}{q}}(\|u\|_{X_T}^{2}+\|v\|_{X_T}^{2})\}\|u-v\|_{X_T}\\
		%\label{prop_LWP_proof_3}
		&\le  (1+C\<T\>^{\frac1q})(CT+2CT^{1-\frac{2}{q}}M^2\|u_0\|_{H^s}^2)\|u-v\|_{X_T}. 
	\end{align*}
	Let $M=2(1+2C)$. Since $q>2$, one can choose $T=T(\|u_0\|_{H^s})$  small enough such that $\Phi$ is a contraction on $B$. The desired assertion then follows by a standard argument based on the fixed point theorem (see e.g. Cazenave \cite{Caz}). 
\end{proof}

%\begin{color}{blue}[Mizutani: the nonlinear estimates of type \eqref{prop_LWP_proof_2} seem to be used in several places in \cite{BGT} and \cite{LS25} even for $s\ge2$. However, it cannot hold for $s\ge2$ since $$\Delta(|u|^2u)=\Delta(|u|^2)u+2\nabla(|u)^2)\cdot\nabla u+|u|^2\Delta u$$and hence $\|\Delta(|u|^2u)\|_{L^2}$ should involve the term like $\|u\|_{L^\infty}\|\nabla u\|^2_{L^2}$. ]\end{color}

\subsection{Global existence and uniform bounds}

%proposition
\begin{proposition}%[Uniform bound on the mass and energy]
	\label{proposition_uniform}
	Let $T>0$ and $u\in C([0,T],H^{1+\epsilon}(\R^3))$ be a solution to \eqref{equ:DMNLS}. Then there exists $C>0$ independent of $T$ such that 
	\begin{align}
		\label{equ:uniformenergy}
		\sup_{t\in [0,T]}M(u(t))\le M(u_0),\quad \sup_{t\in [0,T]}E(u(t))\le C E(u_0)+CM(u_0),
	\end{align}
	where $M(u)$ and $E(u)$ are defined by \eqref{equ:massG} and \eqref{equ:energyG}. 
\end{proposition}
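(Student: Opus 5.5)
The plan is to prove the two bounds separately. Mass monotonicity comes directly from the equation. The energy bound will come from the dissipation identity combined with a damping-controlled virial estimate. By the persistence of regularity in Proposition~\ref{prop:LWP} and continuous dependence, it suffices to argue for smooth data (say $u_0\in H^3$), so that all the multiplier computations below are justified, and then pass to the limit in $H^{1+\epsilon}$.

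\emph{Step 1 (mass).} Pair \eqref{equ:DMNLS} with $\bar u$ and take the imaginary part. Since $\Delta_G$ is selfadjoint and the nonlinearity is gauge invariant, this gives
\[
\frac{d}{dt}M(u(t))=-2\int a|u|^2\,dx\le0,
\]
so $M(u(t))\le M(u_0)$. As a byproduct we get $\int_0^T\!\!\int a|u|^2\,dx\,dt\le \tfrac12 M(u_0)$ uniformly in $T$.

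\emph{Step 2 (energy identity).} Pair the equation with $\partial_t\bar u$ and take the real part. This yields
\[
\frac{d}{dt}E(u)=-\Re\int a\,u\,\overline{(-\Delta_G u+|u|^2u)}\,dx
=-\int a\,G\nabla_Au\cdot\overline{\nabla_Au}-\int a|u|^4-\Re\int (G\nabla a\cdot\nabla_Au)\bar u .
\]
The first two terms are dissipative. The last one is bounded by $\int_{\supp a}|\nabla_A u||u|$, which is not signed. Integrating in time, the problem therefore reduces to showing a uniform-in-$T$ local energy bound
\[
\int_0^T\!\!\int_{K}|\nabla_Au|^2\,dx\,dt\lesssim E(u_0)+M(u_0)+\delta\sup_{[0,T]}E(u),\qquad K\supset\supp a .
\]
The piece $\int_K|u|^2$ is already controlled by Step 1 on $\{a>0\}$. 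One also needs it on $K\setminus\{a>0\}$, but it follows from the same estimate by Hardy and Poincaré-type bounds.

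\emph{Step 3 (virial/Morawetz, the main obstacle).} To get the local bound I would use the Fanelli--Vega multiplier with a radial weight $\phi(x)=\langle x\rangle$, or the weight $\phi(r)$ tuned to the constant $c_0$ in \eqref{equ:Btaucond}. The quantity is $V(t)=\Im\int \bar u\,\nabla\phi\cdot G\nabla_Au$, and $|V(t)|\lesssim M^{1/2}E^{1/2}$. Differentiating produces the following terms:
\begin{itemize}
\item[(a)] a positive term $\int \nabla_Au\cdot D^2\phi\,\overline{\nabla_Au}$ plus $\int|u|^4\Delta\phi$ outside $\supp(G-I)$, together with $-\int|u|^2\Delta^2\phi\ge0$ in three dimensions;
\item[(b)] metric error terms supported in $\supp(G-I)\Subset\{a>0\}$;
\item[(c)] the magnetic term $\Im\int \bar u\,\phi'(r)B_{\tau,0}\cdot\nabla_Au$;
\item[(d)] damping terms of the form $\int a|u||\nabla_Au|$.
\end{itemize}
The terms (b) and (d) are absorbed using $\int a|\nabla_Au|^2$ and $\int a|u|^2$, both of which are controlled by integrating the dissipation in Steps 1--2. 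Under (ii), the term (c) is supported in $\{a>0\}$ and is absorbed the same way. Under (i), I would estimate it by Cauchy--Schwarz together with the magnetic Hardy inequality $\int|u|^2/|x|^3\cdots$, in the form $\|\,|x|^{3/2}B_\tau\|_{L^2_rL^\infty}\,\|\nabla_A^\tau u/|x|^{1/2}\|\,\sup_r\|u\|_{L^2(S_r)}$. The smallness $<2c_0/(2c_0+1)^2$ is exactly what makes this absorbable into (a).

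\emph{Step 4 (closing).} Integrating the virial identity over $[0,T]$ gives the local energy bound of Step 2, with right-hand side $\sup_t|V|\lesssim M(u_0)^{1/2}\sup E^{1/2}$. Inserting this into the integrated energy identity yields
\[
\sup E\le E(u_0)+C M(u_0)+C M(u_0)^{1/2}\sup E^{1/2}.
\]
Absorbing by Young's inequality then gives \eqref{equ:uniformenergy}.

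The delicate point is Step 3. There one must make sure that the metric errors inside the trapping region are only ever paid for by damping dissipation, using \eqref{equ:assumpona}, and never by the positive virial term, since that term degenerates there.
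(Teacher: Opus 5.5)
\textbf{There is a genuine gap: Step 3 is circular as written.} Your skeleton matches the paper's: mass dissipation, the dissipation identity, a Fanelli--Vega virial with the $c_0$-tuned weight, metric errors paid for by damping, Cauchy--Schwarz on the $B_\tau$ term, and a quadratic closing. The circularity sits exactly where damping pays for the metric errors, and the idea that breaks it is missing.

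The metric errors (b) are a non-small multiple of $\int_0^T\!\int_{\supp(G-I)}(|\nabla_Au|^2+|u|^2)$. You pay for them with $\int_0^T\!\int a|\nabla_Au|^2$, which you say is ``controlled by integrating the dissipation''. But the integrated energy identity only gives
\[
\int_0^T\!\!\int a\,G\nabla_Au\cdot\overline{\nabla_Au}\,dx\,dt\le E(u_0)+\int_0^T\!\!\int|\bar u\,G\nabla a\cdot\nabla_Au|\,dx\,dt,
\]
and you bound the last integral by $C\int_0^T\!\int_{\supp a}|u||\nabla_Au|$ with a fixed constant. That puts an $O(1)$ multiple of the local energy $\int_0^T\!\int_K(|\nabla_Au|^2+|u|^2)$ back on the right-hand side of the very estimate you are proving. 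Since neither $G-I$ nor $\nabla a$ is small, it cannot be absorbed by the positive virial terms. The same problem affects (c) under (ii), and your treatment of (d) through $\int a|\nabla_Au|^2$.

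Separately, $\int_0^T\!\int_K|u|^2$ is not controlled by Step 1. That step only bounds $\int\!\int a|u|^2$, and $a$ degenerates at $\partial\{a>0\}$. Hardy's inequality is global in $x$ and gives only $T\sup E$, and there is no Poincar\'e inequality on $K$ without boundary conditions.

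\textbf{The missing idea.} For a smooth nonnegative $a$, the gradient is dominated by $a$ up to a small error. The paper uses
\[
|\nabla a|\le C_\delta a+\delta\chi_{\supp a+\{|x|\le1\}},
\]
which also follows from Glaeser's inequality $|\nabla a|^2\le 2\|\nabla^2a\|_{L^\infty}a$. This gives
\[
\int_0^T\!\!\int|\bar u\,G\nabla a\cdot\nabla_Au|\,dx\,dt\le C_\delta M(u_0)+\delta\lambda_1(T)+\delta\lambda_2(T).
\]
Here $\lambda_2$ is the local gradient quantity. The local mass $\lambda_1(T)=\sup_{R}R^{-2}\int_0^T\!\int_{|x|=R}|u|^2\,d\omega\,dt$ comes from the $-\Delta^2\rho$ term of the virial identity with $\rho=R\rho_0(r/R)$, not from Step 1.

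Similarly, bound (d) by
\[
\|a\|_{L^\infty}^{1/2}\big(\int\!\!\int a|u|^2\big)^{1/2}\big(\int\!\!\int_{|x|\le R_0}|\nabla_Au|^2\big)^{1/2}\le CM(u_0)+\delta\lambda_2 ,
\]
which uses only the unconditional bound $\int\!\int a|u|^2\le M(u_0)/2$. With these, every damping-paid term costs only a $\delta$-fraction of the positive virial quantities, and the bootstrap closes. The paper closes on $\lambda_1$, using $E(u(t))\le E(u_0)+C\lambda_1(t)$ from the second form of the energy identity. Closing on $\sup E$ as you do is also fine once this is in place.

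Finally, the weight $\langle x\rangle$ would not suffice under (i). Its bi-Laplacian term $\int|u|^2\langle x\rangle^{-7}$ does not control the spherical means $\lambda_1$ that your Cauchy--Schwarz bound for $B_\tau$ needs. So the $R$-family of weights, with $\sup_R$, is necessary there.
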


The proof of this proposition is given in Section \ref{section_local_energy_decay} below.

\begin{proposition}[Global existence]\label{prop:CGT}
	%Let $0<\epsilon<1$. 			Assume that the following holds: there exists a continuous function $\Psi:\R_+\to\R_+^\ast$ \begin{color}{blue}[Mizutani: (1) Using the symbol $C$ would confuse readers. (2) Please write explicitly the definition of $\R_+^*$ instead of using the notation $\R_+^*$]\end{color} such that for any $u_0\in H^{1+\epsilon}(\R^3)$ and $T>0$, if \begin{color}{red}$u\in C([0,T],H^{1+\epsilon}(\R^3))$\end{color} is a solution to \eqref{equ:DMNLS} on $[0,T]$ with initial data $u_0$, then\begin{align}\label{prop:CGT_1}\sup_{[0,T]}\|u(t)\|_{H^1(\R^3)}\leq \Psi\big(\|u_0\|_{H^1(\R^3)}\big).\end{align}		Then, the maximal solutions to \eqref{equ:DMNLS} are global forward in time.
	For any $u_0\in H^{1+\epsilon}(\R^3)$, the maximal solution $u$ constructed in Proposition \ref{prop:LWP} exists globally forward in time, namely $u\in C([0,\infty),H^{1+\epsilon}(\R^3))$. Moreover, $u$ satisfies
	\begin{align}
		\label{equ:uniformenergy_2}
		\sup_{t\ge0}M(u(t))\le M(u_0),\quad \sup_{t\ge0}E(u(t))\lesssim E(u_0)+M(u_0).
	\end{align}
\end{proposition}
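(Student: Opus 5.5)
The plan is to combine the blow-up alternative of Proposition~\ref{prop:LWP} with the uniform mass/energy bound of Proposition~\ref{proposition_uniform}, in the spirit of Burq--G\'erard--Tzvetkov \cite{BGT}. Let $u$ be the maximal solution with lifespan $[0,T_{\max})$ and suppose, for contradiction, that $T_{\max}<\infty$. For every $T<T_{\max}$, Proposition~\ref{proposition_uniform} applies on $[0,T]$ with a constant independent of $T$. Combined with \eqref{equ:Gpd} and Lemma~\ref{lemma_2_2} (so that $\|\nabla_A u\|_{L^2}^2+\|u\|_{L^2}^2\simeq\|u\|_{H^1}^2$), this gives
\[
\sup_{0\le t<T_{\max}}\|u(t)\|_{H^1}\le K:=C(E(u_0)+M(u_0))^{1/2}.
\]
The task is then to upgrade this $H^1$ bound to a bound on $\|u(t)\|_{H^{1+\epsilon}}$ on $[0,T_{\max})$, which contradicts the blow-up alternative.

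For the upgrade I follow the BGT argument on short intervals whose length depends only on $K$. Fix $t_0<T_{\max}$ and work on $[t_0,t_0+\tau]$. Apply Proposition~\ref{prop:Striest} with $s=1-1/q$ and an admissible pair $(q,r)$ with $q>2$ close to $2$. Then the loss $1/q$ is absorbed by $H^1$, and $W^{1-1/q,r}$ is \emph{not} quite in $L^\infty$. To control $\|u\|_{L^q_tL^\infty_x}$, I will use the Brezis--Gallouet-type trick from \cite{BGT}: estimate $\|u\|_{L^q([t_0,t_0+\tau],L^\infty)}$ by the $W^{1-1/q+\delta,r}$ norm, which by Strichartz costs $\|u\|_{H^{1+\delta}}$. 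A cleaner route is to choose $q$ with $1/q<\epsilon$. Then $W^{s-1/q,r}\hookrightarrow L^\infty$ needs $s-1/q>3/r=3/2-2/q$, i.e.\ $s>3/2-1/q$. This requires $s>1$, so pure $H^1$ control fails and a logarithmic interpolation is needed. Concretely, I would prove
\[
\|u\|_{L^q(I,L^\infty)}\lesssim \langle\tau\rangle^{1/q}\Big(K+\tau K^3\Big)\log^{1/2}\!\Big(2+\sup_{I}\|u\|_{H^{1+\epsilon}}\Big),
\]
using a Littlewood--Paley splitting at a frequency $N$: low frequencies are estimated by Strichartz at $H^1$ level, and high frequencies are estimated by $N^{-\epsilon'}\|u\|_{H^{1+\epsilon}}$. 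The nonlinear term is bounded in $L^1_tH^1$ by $\tau^{1-2/q}\|u\|_{L^qL^\infty}^2K$.

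Next I insert this into the $H^{1+\epsilon}$ Duhamel estimate. From the energy identity for $(1-\Delta_G)^{(1+\epsilon)/2}u$, using \eqref{prop_LWP_proof_2} and the boundedness of $a$ in $W^{2,\infty}$, we get
\[
\|u(t)\|_{H^{1+\epsilon}}\le \|u(t_0)\|_{H^{1+\epsilon}}\exp\Big(C\int_{t_0}^t(1+\|u\|_{L^\infty}^2)\,ds\Big).
\]
Together with the logarithmic bound and $q>2$, this yields a Gronwall inequality of the form $Y'\lesssim Y\log Y$ (after Hölder in time). Such an inequality forces at most double-exponential growth, so $\|u(t)\|_{H^{1+\epsilon}}$ stays finite on $[0,T_{\max})$. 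This contradicts the blow-up criterion, hence $T_{\max}=\infty$. Estimate \eqref{equ:uniformenergy_2} then follows by letting $T\to\infty$ in \eqref{equ:uniformenergy}, since the constant there is independent of $T$.

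The main obstacle is the logarithmic $L^q_tL^\infty_x$ estimate. The Strichartz loss $1/q$ of Proposition~\ref{prop:Striest} means the $H^1$ energy bound alone does not control $L^q L^\infty$, so the argument hinges on splitting at the frequency $N\sim\|u\|_{H^{1+\epsilon}}^{1/\epsilon}$. Carrying this out requires the frequency cut-offs to be compatible with $e^{it\Delta_G}$. I would use the semiclassical functional calculus of $\Delta_G$ (Appendix), i.e.\ the cut-offs $\varphi(-h^2\Delta_G)$, together with the $L^p$ equivalences of Lemma~\ref{lemma_2_2}, to justify it.
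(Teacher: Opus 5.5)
\textbf{There is a genuine gap in the key step.} Your architecture is the same as the paper's: the blow-up alternative, the uniform $H^1$ bound $K$ from Proposition~\ref{proposition_uniform}, a Brezis--Gallouet type bound on $\|u\|_{L_t L^\infty_x}$, and a Gronwall argument in $H^{1+\epsilon}$. The failure is in the logarithmic $L^\infty_x$ bound, which cannot be derived from the lossy estimate of Proposition~\ref{prop:Striest}.

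\textbf{Why the lossy estimate does not give a logarithm.} For a spectral piece $P_Nu$ at frequency $N$, Bernstein and \eqref{equ:Strihom} give
\[
\|P_Nu\|_{L^q(I,L^\infty)}\lesssim N^{\frac3r}\|P_Nu\|_{L^q(I,L^r)}\lesssim N^{\frac3r+\frac1q-1}\|P_Nu(t_0)\|_{H^1}+\dots=N^{\frac12-\frac1q}\|P_Nu(t_0)\|_{H^1}+\dots
\]
\begin{itemize}
\item For $q>2$ the low frequencies therefore sum to $N_0^{1/2-1/q}K$, a power of $N_0$, not $K\log^{1/2}N_0$. Since $N_0$ is a power of $Y=\sup_I\|u\|_{H^{1+\epsilon}}$, the Gronwall exponent is $\sim(\tau N_0)^{1-2/q}K^2$. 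This closes only on intervals of length $\tau\lesssim Y^{-c}$, on which $Y$ may grow by a fixed factor. The interval lengths you can guarantee are then summable, so you never reach $T_{\max}$.
\item At the endpoint $q=2$ the logarithm does appear, but there is no H\"older gain in time. The data term gives $(\log N_0)^{1/2}K$ with no small factor, so Gronwall yields $Y\lesssim Y(t_0)\,Y^{CK^2/\epsilon}$. This is not closable for large $K$.
\end{itemize}

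\textbf{The Duhamel term is circular.} The inhomogeneous estimate \eqref{equ:Striinhom} only accepts forcing in $L^1_tH^{s+1/q}$, i.e.\ $\||u|^2u\|_{L^1_tH^1}$. In three dimensions this is not bounded by $K^3$, since $|u|^2\nabla u$ lies in $L^{6/5}$, not $L^2$. The only available bound is by $\|u\|^2_{L^2_tL^\infty_x}K$, which reinserts the unknown quadratically. Your displayed bound containing $\tau K^3$ is inconsistent with your own description of the nonlinear term.

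The same problem affects your high frequencies. For $\epsilon<1/2$ we have $H^{1+\epsilon}\not\hookrightarrow L^\infty$. So $\|P_{>N_0}u\|_{L^\infty}$ cannot be bounded pointwise in time by $N_0^{-\epsilon'}\|u\|_{H^{1+\epsilon}}$; it also needs a Strichartz bound with the same circular forcing.

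\textbf{The missing idea.} You need the loss-free semiclassical Strichartz estimate on time scales $\alpha h$ for $\varphi(-h^2\Delta_G)$-localized data. This includes the endpoint $L^2_tL^6_x$ and its dual $L^2_tL^{6/5}_x$ (Lemma~\ref{lemma_CGT_2}). It is glued over the $\sim h^{-1}$ subintervals of a unit interval by the cut-off argument of Lemma~\ref{lemma_CGT_3}. This yields \eqref{refine-strichartz},
\[
\|\varphi_hu\|_{L^2(I,L^6)}\lesssim h^{1/2}\|\varphi_hu\|_{L^2(I,H^1)}+h\Lambda(u),
\]
in which the cubic term sits in $L^2_tW^{1,6/5}_x$ and is controlled by $\sup_t\|u\|_{H^1}^3$ alone.

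After Bernstein (a factor $h^{-1/2}$), sum over dyadic $h$. Below the cut-off, use $\ell^2$-summation of $\|\varphi_hu\|_{L^2(I,H^1)}\le|I|^{1/2}K$; above it, use $h^{s-1}\|u\|_{L^2(I,H^s)}$. This gives \eqref{prop:CGT_proof_1}, whose logarithm carries the factor $|I|^{1/2}$. That factor is exactly what makes Gronwall close: $Y\le C\,Y(b)\,(2+Y)^{C|I|}$ with $C|I|\le 1/2$, where $|I|$ is chosen independently of the solution.

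Your closing remark about using the cut-offs $\varphi(-h^2\Delta_G)$ only addresses their commutation with the flow. It does not address the derivative loss, which is the actual obstruction.
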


%\begin{color}{blue}[Mizutani:Instead of using the abstract condition on $\|u(t)\|_{H^1}$ as in the previous version, I added Proposition \ref{proposition_uniform} and use it in the proof of Proposition \ref{prop:CGT}. With this modification, I also slightly modified the structure of this section. ]\end{color}

To prove this proposition, we first establish the following three lemmas

%lemma
\begin{lemma}
	\label{lemma_CGT_1}
	Let $\varphi\in C_c^\infty(\R)$ and  $1\le p\le q\le \infty$. Then, for any $h\in (0,1]$, 
	\begin{align}
		%\|\varphi(-h^2\Delta_G)f\|_{L^2}&\lesssim h^s \|\varphi(-h^2\Delta_G)f\|_{H^{s}_{G}},\\\label{lemma_CGT_1_1}
		\label{lemma_CGT_1_1}
		\|\varphi(-h^2\Delta_G)f\|_{L^q}&\lesssim h^{-3(1/p-1/q)}\|\varphi(-h^2\Delta_G)f\|_{L^{p}}. 
		%\|\varphi(-h^2\Delta_G)f\|_{L^2}&\lesssim h\|\varphi(-h^2\Delta_G)f\|_{H^1}.
	\end{align}
	Moreover, if in addition $\supp \varphi\Subset (0,\infty)$ and $p>1$, then for any $s\ge0$
	\begin{align}
		%\label{lemma_CGT_1_1}\|\varphi(-h^2\Delta_G)f\|_{L^2}&\lesssim h^s \|\varphi(-h^2\Delta_G)f\|_{H^{s}_{G}},\\\label{lemma_CGT_1_1}
		\label{lemma_CGT_1_2}
		\|\varphi(-h^2\Delta_G)f\|_{L^q}&\lesssim h^{-3(1/p-1/q)+s}\|\varphi(-h^2\Delta_G)f\|_{W^{s,p}}. 
		%\|\varphi(-h^2\Delta_G)f\|_{L^2}&\lesssim h\|\varphi(-h^2\Delta_G)f\|_{H^1}.
	\end{align}
\end{lemma}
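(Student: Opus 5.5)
We will prove the lemma with the semiclassical functional calculus for $\Delta_G$ (Helffer--Sj\"ostrand), whose details the paper places in Appendix~\ref{appendix_semiclassical}. The main input is a parametrix: for $\varphi\in C_c^\infty(\R)$ and every $N$, we want
\[
\varphi(-h^2\Delta_G)=\sum_{j=0}^{N-1}h^j\,\mathrm{Op}_h(a_j)+h^N R_N(h),
\]
where $a_0(x,\xi)=\varphi(G(x)\xi\cdot\xi)$ and each $a_j\in S^{-\infty}$ is supported in $\{G(x)\xi\cdot\xi\in\supp\varphi\}$. The remainder should satisfy $\|R_N(h)\|_{H^{-m}\to H^{m}}\lesssim h^{-2m}$ uniformly in $h\in(0,1]$. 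We obtain this by writing
\[
\varphi(P)=\frac1\pi\int_{\C}\bar\partial\tilde\varphi(z)\,(P-z)^{-1}\,dL(z), \qquad P=-h^2\Delta_G,
\]
and building a semiclassical parametrix for $(P-z)^{-1}$ with symbol bounds polynomial in $|\Im z|^{-1}$. The magnetic terms $hA$ and $h^2\,\div$-type terms only enter at lower order, since $\partial^\alpha A\in L^\infty$.

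\textbf{Bernstein estimate \eqref{lemma_CGT_1_1}.} The kernel of $\mathrm{Op}_h(a_j)$ is $h^{-3}K_j(x,(x-y)/h)$, where $K_j(x,\cdot)$ is Schwartz uniformly in $x$, because $a_j$ is compactly supported in $\xi$ (using uniform ellipticity \eqref{equ:Gpd}). By Schur's test and Young's inequality, $\|\mathrm{Op}_h(a_j)\|_{L^p\to L^q}\lesssim h^{-3(1/p-1/q)}$. For the remainder, take $m$ large and use Sobolev embedding together with the kernel's decay: $h^NR_N$ is bounded $L^p\to L^q$ with norm $\lesssim h^{N-2m-3}$, which suffices once $N$ is large. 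Estimating $\varphi(-h^2\Delta_G)f$ directly this way would only give the bound in terms of $\|f\|_{L^p}$. To get $\|\varphi(-h^2\Delta_G)f\|_{L^p}$ on the right instead, choose $\tilde\varphi\in C_c^\infty$ with $\tilde\varphi\varphi=\varphi$. Then $\varphi(-h^2\Delta_G)f=\tilde\varphi(-h^2\Delta_G)\varphi(-h^2\Delta_G)f$, and applying the bound to $\tilde\varphi$ gives \eqref{lemma_CGT_1_1}.

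\textbf{Estimate \eqref{lemma_CGT_1_2}.} Now $\supp\varphi\Subset(0,\infty)$. Pick $\tilde\varphi\in C_c^\infty((0,\infty))$ with $\tilde\varphi=1$ on $\supp\varphi$, and write
\[
\tilde\varphi(-h^2\Delta_G)=\psi(-h^2\Delta_G)\,(1-h^2\Delta_G)^{s/2}\cdot h^{s}(1-\Delta_G)^{-s/2}\cdot\big(h^{-2}(1-\Delta_G)/(h^{-2}-\Delta_G)\big)^{s/2},
\]
with $\psi(\lambda)=\tilde\varphi(\lambda)(1+\lambda)^{-s/2}\in C_c^\infty((0,\infty))$. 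More simply, we use the identity
\[
\tilde\varphi(-h^2\Delta_G)=h^s\,\chi_h(-\Delta_G)\,(1-\Delta_G)^{s/2},
\]
where $\chi_h(\lambda)=\tilde\varphi(h^2\lambda)(h^2+h^2\lambda)^{-s/2}$. On $\supp\tilde\varphi(h^2\cdot)$ we have $h^2\lambda\sim1$ and $h^2\le h^2\lambda$, so $\chi_h(\lambda)=\eta(h^2\lambda)+O$-type symbol, with $\eta(\mu)=\tilde\varphi(\mu)(h^2+\mu)^{-s/2}$ a $C_c^\infty((0,\infty))$ function having all derivatives bounded uniformly in $h$. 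This is the step where $\supp\varphi\Subset(0,\infty)$ is essential: away from zero the factor $(h^2+\mu)^{-s/2}$ is harmless. The parametrix applies uniformly to such families, so $\eta(-h^2\Delta_G)$ is bounded $L^p\to L^q$ with norm $\lesssim h^{-3(1/p-1/q)}$. Finally, $\|(1-\Delta_G)^{s/2}g\|_{L^p}\simeq\|g\|_{W^{s,p}}$ for $1<p<\infty$ by Lemma~\ref{lemma_2_2}; this is why $p>1$ is needed. Taking $g=\varphi(-h^2\Delta_G)f$ yields \eqref{lemma_CGT_1_2}.

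\textbf{Main obstacle.} The difficulty is the uniform-in-$h$ $L^p\to L^q$ control of the functional-calculus operators, especially at the endpoints $p=1$ or $q=\infty$ where Lemma~\ref{lemma_2_2} is not available. This rests on the parametrix remainder being smoothing with only polynomial losses in $h$. That in turn needs resolvent bounds $\|(P-z)^{-1}\|_{H^{k}\to H^{k+2}}\lesssim|\Im z|^{-1-k'}h^{-2}$, obtained by standard elliptic regularity for $-h^2\Delta_G$. Combining these with Sobolev embedding $H^{m}\hookrightarrow L^\infty$, $m>3/2$, together with off-diagonal kernel decay handles the $L^1$ and $L^\infty$ endpoints.
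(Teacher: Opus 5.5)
Your architecture matches the paper's: the Helffer--Sj\"ostrand formula, a semiclassical parametrix for $(-h^2\Delta_G-z)^{-1}$ with polynomial losses in $|\Im z|^{-1}$, the kernel bound $h^{-3}K(x,(x-y)/h)$ for the compactly supported symbols $a_j$, and the reproducing trick $\varphi=\tilde\varphi\varphi$ for \eqref{lemma_CGT_1_1}.

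\textbf{The remainder.} This is the step you assert but do not justify: the $L^p\to L^q$ bound for $R_N$. An $H^{-m}\to H^m$ bound plus Sobolev embedding only covers $1\le p\le 2\le q\le\infty$. The lemma is also used outside this range, for instance with $p=q=6/5$ and $s=1$ in the proof of Proposition~\ref{prop:CGT}. For such exponents everything rests on the off-diagonal decay of the kernel of $R_N$, and elliptic regularity does not give it. The paper obtains it from Beals' characterization. Commutator bounds for the resolvent with $x_j$ and $\partial_{x_j}$ show that $(-h^2\Delta_G-z)^{-1}$ is itself an $h$-PDO. Hence $h^{N+1}R(h,N)$ is an $h$-PDO with symbol in $S^{-5-N}$, and Lemma~\ref{lemma_appendix_1} applies directly. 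Your route can be completed with the same input:
\begin{itemize}
\item $(x-y)^\alpha K_{R_N}(x,y)$ is the kernel of the iterated commutator $\mathrm{ad}_x^\alpha R_N$.
\item $[x_j,(P-z)^{-1}]=(P-z)^{-1}[P,x_j](P-z)^{-1}$, where $[P,x_j]$ is a first-order operator carrying a factor $h^2$. Commutators of $x_j$ with the $h$-PDO factor $r_{N+1}(x,hD,z)$ are again $h$-PDOs.
\item Therefore all $\mathrm{ad}_x^\alpha R_N$ remain bounded $H^{-m}\to H^m$ with polynomial losses in $h$ and $|\Im z|^{-1}$.
\item This gives $|K_{R_N}(x,y)|\lesssim h^{-M}\langle x-y\rangle^{-4}$, and Schur's test then yields every $1\le p\le q\le\infty$.
\end{itemize}

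\textbf{The estimate \eqref{lemma_CGT_1_2}.} Here your factorization differs from the paper's and is, if anything, better matched to Lemma~\ref{lemma_2_2}. The paper writes $\psi(-h^2\Delta_G)=[\psi(\lambda)\lambda^{-s/2}](-h^2\Delta_G)\,h^s(-\Delta_G)^{s/2}$ with a fixed cutoff. It then needs the homogeneous bound $\|(-\Delta_G)^{s/2}g\|_{L^p}\lesssim\|g\|_{W^{s,p}}$, which Lemma~\ref{lemma_2_2} does not literally state. You factor through $(1-\Delta_G)^{s/2}$, which Lemma~\ref{lemma_2_2} controls exactly. The price is the $h$-dependent cutoff $\eta_h(\mu)=\tilde\varphi(\mu)(h^2+\mu)^{-s/2}$. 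Its support is fixed inside $(0,\infty)$ and its $C^k$ norms are uniform in $h\in(0,1]$, so the Helffer--Sj\"ostrand bounds are uniform and the step is legitimate.

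Discard your first displayed factorization. Its right-hand side equals $\tilde\varphi(-h^2\Delta_G)(h^{-2}-\Delta_G)^{-s/2}$, not $\tilde\varphi(-h^2\Delta_G)$. The identity $\tilde\varphi(-h^2\Delta_G)=h^s\eta_h(-h^2\Delta_G)(1-\Delta_G)^{s/2}$ is exact and is all you need; there is no error term in $\chi_h(\lambda)=\eta_h(h^2\lambda)$.
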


%lemma
\begin{lemma}
	\label{lemma_CGT_2}
	Let $\varphi\in C_c^\infty(\R)$ with $\mathop{\mathrm{supp}} \varphi\Subset (0,\infty)$. Then there exists $\alpha>0$ such that, for any $h\in (0,1]$ and interval $I$ with $|I|\le \alpha h$
	\begin{align*}
		%\label{lemma_CGT_2_1}
		\|e^{it\Delta_G}\varphi(-h^2\Delta_G)u_0\|_{L^2(I,L^6)}&\lesssim \|\varphi(-h^2\Delta_G)u_0\|_{L^2},\\
		%\label{lemma_CGT_2_2}
		\left\|\int_{(-\infty,t]\cap I} e^{i(t-s)\Delta_G}\varphi(-h^2\Delta_G)F(s)ds\right\|_{L^2(I,L^6)}
		&\lesssim \|\varphi(-h^2\Delta_G)F\|_{L^1(I,L^2)+L^2(I,L^{\frac 65})}. 
	\end{align*}
\end{lemma}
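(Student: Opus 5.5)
The estimate of Lemma \ref{lemma_CGT_2} is the short-time Strichartz estimate of Burq--G\'erard--Tzvetkov, transplanted to $\Delta_G$. I will prove the homogeneous estimate through a frequency-localized dispersive bound on times of order $h$. The inhomogeneous estimate then follows from the standard $TT^*$ and Christ--Kiselev machinery.

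\textbf{Parametrix.} Since $\supp\varphi\Subset(0,\infty)$, take $\tilde\varphi\in C_c^\infty((0,\infty))$ with $\tilde\varphi\varphi=\varphi$. Then
\[
e^{it\Delta_G}\varphi(-h^2\Delta_G)=e^{it\Delta_G}\tilde\varphi(-h^2\Delta_G)\varphi(-h^2\Delta_G).
\]
By the semiclassical functional calculus of Appendix~\ref{appendix_semiclassical}, write $\tilde\varphi(-h^2\Delta_G)=\Phi_h+R_h$. Here $\Phi_h=\phi(x,hD)$ with $\phi\in S^{-\infty}$ supported in $\{|\xi|\sim1\}$. The remainder $R_h$ satisfies $\|R_h\|_{L^2\to H^N}\lesssim h^{N}$ for every $N$; combined with Sobolev embedding, this makes the contribution of $R_h$ bounded by $\|\varphi(-h^2\Delta_G)u_0\|_{L^2}$ on intervals of length $\le1$.

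For the main term, rescale time $t=hs$. Then $e^{it\Delta_G}=e^{-is P_h/h}$ with $P_h=-h^2\Delta_G$ a semiclassical operator. Its principal symbol is $p(x,\xi)=G(x)\xi\cdot\xi$; the magnetic terms $A$ enter only at orders $h$ and $h^2$. For $|s|\le\alpha$ small, independent of $h$, construct a WKB parametrix
\[
e^{-isP_h/h}\Phi_h=J_h(s)+O_{L^2\to L^2}(h^\infty),
\]
where $J_h(s)$ is a semiclassical FIO whose phase solves the eikonal equation $\partial_s S+p(x,\nabla_xS)=0$ and whose amplitude solves the transport equations. The key point is that on $|\xi|\sim1$ and for $|s|\le\alpha$ the Hamilton flow has no caustics. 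This holds because $\alpha$ is chosen using only uniform bounds on $G$ and its derivatives; trapping is irrelevant at this time scale.

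\textbf{Dispersive bound and Strichartz.} Stationary phase in $\xi$, using that $\partial_\xi^2 S$ is nondegenerate ($\approx 2sG(x)$), gives
\[
\|J_h(s)J_h(s')^*\|_{L^1\to L^\infty}\lesssim h^{-3}(1+|s-s'|/h)^{-3/2}.
\]
In the original time variable this reads $\|e^{i(t-t')\Delta_G}\Phi_h\Phi_h^*\|_{L^1\to L^\infty}\lesssim|t-t'|^{-3/2}$ for $|t-t'|\le\alpha h$. The error terms are handled by Lemma \ref{lemma_CGT_1}, whose Bernstein inequalities cost only powers $h^{-3}$; these are absorbed by the $O(h^\infty)$ remainders. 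Together with $L^2$ boundedness, the Keel--Tao theorem gives the endpoint homogeneous estimate in $L^2(I,L^6)$ for $|I|\le\alpha h$.

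For the inhomogeneous estimate, apply Keel--Tao's retarded estimate to the truncated Duhamel operator. It is defined with $\varphi(-h^2\Delta_G)F$ inserted and $\tilde\varphi$ used to split the operator as $T T^*$. The $L^1(I,L^2)$ part follows directly from Minkowski and the homogeneous bound. The $L^2(I,L^{6/5})$ part is the endpoint dual estimate from Keel--Tao, whose argument tolerates the time restriction to $I$.

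\textbf{Main obstacle.} The delicate step is the parametrix with uniform-in-$h$ control, including the magnetic lower-order terms. The plan is to follow \cite{BGT} (see also \cite[\S3]{Miz13}). There the $A$-dependent terms modify only the transport equations, and they are harmless because $\partial^\alpha A\in L^\infty$ for every multi-index $\alpha$.
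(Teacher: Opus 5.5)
Your proposal follows essentially the same route as the paper. You rescale $t=hs$, build a Hamilton--Jacobi/WKB parametrix for $|s|\le\alpha$ (with $\alpha$ depending only on bounds for $G$, and $A$ entering only the transport equations), obtain the dispersive bound $|t|^{-3/2}$ for $|t|\le\alpha h$ by stationary phase with the $O(h^\infty)$ errors absorbed through the Bernstein bounds of Lemma~\ref{lemma_CGT_1}, and conclude with Keel--Tao; the paper's reduction to the $L^1\to L^\infty$ bound for $\varphi(-h^2\Delta_G)e^{-ith\Delta_G}b(x,hD)$ is the same thing as your $TT^*$ splitting with $\tilde\varphi$. The only slip is your opening appeal to Christ--Kiselev, which does not cover the double endpoint $L^2_tL^{6/5}_x\to L^2_tL^6_x$; as you say later, that retarded estimate comes directly from Keel--Tao.
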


The proofs of Lemmas \ref{lemma_CGT_1} and \ref{lemma_CGT_2} are essentially same as that of \cite[Corollary 2.2, Propositions 2.9 and Lemma 3.4]{BGT}. We give a brief outline of the proofs in Appendix \ref{appendix_semiclassical} below. 

%lemma
\begin{lemma}
	\label{lemma_CGT_3}
	Let $\varphi\in C_c^\infty(\R)$ with $\mathop{\mathrm{supp}} \varphi\Subset (0,\infty)$ and $b<c$ with $c-b\le1$. Then, for any solution  $v\in C([b,c],L^2)$  to 
	$$
	i\partial_tv+\Delta_Gv=F_1+F_2,\quad F_1\in L^1(I,L^2),\quad F_2\in L^2(I,L^{6/5}),
	$$
	and $h\in (0,1]$, we have
	\begin{align*}
		\|\varphi_hv\|_{L^2([b,c],L^6)}
		&\lesssim \|\varphi_h v\|_{L^\infty([b,c],L^2)}+h^{-1/2}\|\varphi_hv\|_{L^2(I,L^2)}\\
		&\quad+h^{1/2}\|\varphi_hF_1\|_{L^2(I,L^2)}+\|\varphi_hF_2\|_{L^2(I,L^{6/5})},
	\end{align*}
	where we set $\varphi_h=\varphi(-h^2\Delta_G)$ for short. 
\end{lemma}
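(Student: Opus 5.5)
The plan follows the Burq--G\'erard--Tzvetkov argument (\cite[Lemma 3.4]{BGT}): chop $[b,c]$ into intervals of length comparable to $h$, apply Lemma~\ref{lemma_CGT_2} on each piece, and sum. Since $\varphi_h$ commutes with $e^{it\Delta_G}$ and with $\Delta_G$, the function $w:=\varphi_hv$ solves
\[
i\partial_tw+\Delta_Gw=\varphi_hF_1+\varphi_hF_2 .
\]
Let $\alpha$ be the constant from Lemma~\ref{lemma_CGT_2}. Split $[b,c]=\bigcup_{k=1}^N I_k$ into consecutive intervals $I_k=[t_k,t_{k+1}]$ with $|I_k|\le\alpha h$ and $N\lesssim 1+(c-b)/(\alpha h)\lesssim h^{-1}$ (using $c-b\le1$). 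We also fix a slightly larger cutoff $\tilde\varphi\in C_c^\infty((0,\infty))$ with $\tilde\varphi\varphi=\varphi$, so that Lemma~\ref{lemma_CGT_2} can be applied with $\tilde\varphi_h$ to data already localized by $\varphi_h$.

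On each $I_k$ we write Duhamel's formula from a base time $s_k\in I_k$:
\[
w(t)=e^{i(t-s_k)\Delta_G}w(s_k)-i\int_{s_k}^t e^{i(t-s)\Delta_G}\big(\varphi_hF_1+\varphi_hF_2\big)(s)\,ds .
\]
Lemma~\ref{lemma_CGT_2} then gives
\[
\|w\|_{L^2(I_k,L^6)}\lesssim \|w(s_k)\|_{L^2}+\|\varphi_hF_1\|_{L^1(I_k,L^2)}+\|\varphi_hF_2\|_{L^2(I_k,L^{6/5})} .
\]
The integral runs from $s_k$ rather than from $-\infty$. If this causes trouble, we handle it by taking $s_k=t_k$, or by the Christ--Kiselev lemma, which is available since the exponents satisfy $2>1$.

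Next we square and sum over $k$. The $F_2$ terms sum to $\|\varphi_hF_2\|_{L^2(I,L^{6/5})}^2$. For the $F_1$ terms, Cauchy--Schwarz gives
\[
\|\varphi_hF_1\|_{L^1(I_k,L^2)}^2\le \alpha h\,\|\varphi_hF_1\|_{L^2(I_k,L^2)}^2 ,
\]
so their sum is at most $h\|\varphi_hF_1\|_{L^2(I,L^2)}^2$, which yields the factor $h^{1/2}$ after taking square roots.

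The step we expect to be the main obstacle is the initial-data terms $\sum_k\|w(s_k)\|_{L^2}^2$. Choosing the points $s_k$ naively only gives $N\|w\|_{L^\infty L^2}^2\sim h^{-1}\|w\|_{L^\infty L^2}^2$, which is too large. Instead we average over the base point: apply the estimate for every $s_k\in I_k$ and average over $s_k\in I_k$. This gives
\[
\|w(s_k)\|_{L^2}^2\;\text{replaced by}\;|I_k|^{-1}\int_{I_k}\|w(s)\|_{L^2}^2\,ds ,
\]
and choosing $|I_k|\simeq\alpha h$ for all but possibly the last interval makes the sum at most $C h^{-1}\|w\|_{L^2(I,L^2)}^2$.

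The averaging needs the Duhamel integral from $s_k$ to be controlled uniformly in $s_k$. We get this by splitting it into the pieces on $[t_k,t]$ and $[t_k,s_k]$; the second piece is a free evolution of an $L^2$ function that is already bounded by the $F$ norms. A short last interval, if there is one, is bounded by $\|w\|_{L^\infty([b,c],L^2)}$, which accounts for that term in the statement. Taking square roots completes the estimate.
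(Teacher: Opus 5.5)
Your argument is correct, but it handles the key step differently from the paper.

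\textbf{The paper's route.} The paper uses Duhamel from the endpoints $b$ and $c$ only on the first and last intervals; this is where $\|\varphi_hv\|_{L^\infty L^2}$ enters. On each interior interval $I_j$ it multiplies by a time cutoff $\chi_j$ with three properties: $\chi_j\equiv1$ on $I_j$, $\chi_j$ is supported in an $\alpha h/4$-neighbourhood $J_j$, and $|\chi_j'|\lesssim h^{-1}$. Then $\chi_j\varphi_hv$ has zero data and an extra source $i\chi_j'\varphi_hv$. The retarded estimate of Lemma~\ref{lemma_CGT_2} plus Cauchy--Schwarz turn that source into $h^{-1}\|\varphi_hv\|_{L^1(J_j,L^2)}\lesssim h^{-1/2}\|\varphi_hv\|_{L^2(J_j,L^2)}$. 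The pieces are summed using the bounded overlap of the $J_j$.

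\textbf{Your route.} Averaging over the base point $s_k$ is a non-smooth version of the same mechanism: a pointwise-in-time $L^2$ norm is traded for its mean over a window of length $\sim h$. It avoids the cutoffs, the overlapping neighbourhoods and the boundary/interior distinction. It also uses the $L^\infty L^2$ term only for a possibly short interval, or when $c-b<\alpha h$.

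\textbf{The price.} You need Duhamel from an interior point, hence the splitting $\int_{s_k}^t=\int_{t_k}^t-\int_{t_k}^{s_k}$. Your phrase ``already bounded by the $F$ norms'' for the second piece silently uses, for the $F_2$ part, the dual of the homogeneous estimate:
\[
\Big\|\int_J e^{-is\Delta_G}\tilde\varphi_h G(s)\,ds\Big\|_{L^2}\lesssim\|G\|_{L^2(J,L^{6/5})},\qquad |J|\le\alpha h .
\]
This is applied with $G=\varphi_hF_2$ and $\tilde\varphi_h\varphi_h=\varphi_h$. It follows at once from Lemma~\ref{lemma_CGT_2} by duality, but it is not literally contained in it. On the interior intervals the paper needs only the retarded estimate with zero data.

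\textbf{A correction to an aside.} The Christ--Kiselev lemma would not rescue the $F_2$ term. $L^2_tL^{6/5}\to L^2_tL^6$ is the double endpoint, where the two time exponents coincide and that lemma fails. Your splitting argument does not need it, so the proof is unaffected.
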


\begin{proof}
	Recall that $\varphi_hv$ solves
	\begin{align}
		\label{lemma_CGT_3_proof_1}
		\varphi_hv(t)=e^{i(t-t_0)\Delta_G}\varphi_hv(t_0)-i\int_{t_0}^t e^{i(t-s)\Delta_G}\varphi_h(F_1+F_2)(s)ds.
	\end{align}
	for any $t_0\in[b,c]$. Decomposing the time interval $[b,c]$ into $N$-intervals $I_j$ as
	$$
	[b,c]=\bigcup_{j=1}^NI_j,\ N\sim h^{-1},\ 0<|I_j|\le \frac{\alpha h}{2},\ I_j\cap I_k=\emptyset\ \text{if}\ j\neq k,\ b\in I_1,\ c\in I_N, 
	$$
	we apply Lemma \ref{lemma_CGT_2} with $I=I_1$ to \eqref{lemma_CGT_3_proof_1} with $t_0=b$, obtaining
	\begin{align}
		\label{lemma_CGT_3_proof_2}
		\|\varphi_h v\|_{L^2(I_1,L^6)}
		&\lesssim \|\varphi_hv(b)\|_{L^2}+\|\varphi_h F_1\|_{L^1(I_1,L^2)}+\|\varphi_hF_2\|_{L^2(I_1,L^{6/5})}\\
		\nonumber
		&\lesssim \|\varphi_hv(b)\|_{L^2}+h^{1/2}\|\varphi_h F_1\|_{L^2(I_1,L^2)}+\|\varphi_hF_2\|_{L^2(I_1,L^{6/5})}.
	\end{align}
	%where we also used the bound $\|av\|_{H^1}\lesssim \|a\|_{W^{2,\infty}}\|v\|_{H^1}$. 
	For $j=N$, we similarly obtain by using \eqref{lemma_CGT_3_proof_1} with $t_0=c$
	\begin{align}
		\|\varphi_h v\|_{L^2(I_N,L^6)}
		%\nonumber&\lesssim \|\varphi_hv(a)\|_{L^2}+\|f\|_{L^2(I_1,L^{6/5})}\\
		\label{lemma_CGT_3_proof_3}
		&\lesssim \|\varphi_hv(c)\|_{L^2}+h^{1/2}\|\varphi_h F_1\|_{L^2(I_N,L^2)}+\|\varphi_hF_2\|_{L^2(I_N,L^{6/5})}.
	\end{align}
	For $2\le j\le N-1$, we let $J_j$ an $\frac{\alpha h}{4}$-neighborhood of $I_j$ such that $J_j\cap J_k=\emptyset$ for $|j-k|\ge2$ and take $\chi_j\in C_c^\infty(\R)$ satisfying $\mathop{\mathrm{supp}} \chi_j\subset J_j$, $\chi_j\equiv1$ on $I_j$ and $\|\chi_j'\|_{L^\infty}\lesssim h^{-1}$. Then $\chi_j(t)\varphi_hv$ satisfies
	$$
	(i\partial_t+\Delta_G)\chi_j(t)\varphi_hv=i\chi_j'\varphi_hv+\chi_j\varphi_h(F_1+F_2),\quad \chi_j(a)\varphi_hv(a)=0,
	$$
	and  thus the Duhamel formula
	$$
	\chi_j(t)\varphi_hv(t)=-i\int_{a}^t e^{i(t-s)\Delta_G}\left\{i\chi_j'(s)\varphi_hv(s)+\chi_j(s)\varphi_h(F_1+F_2)(s)\right\}ds.
	$$
	This, together with Lemma \ref{lemma_CGT_2}, implies
	\begin{align}
		\label{lemma_CGT_3_proof_4}
		&\|\varphi_hv\|_{L^2(I_j,L^6)}
		\le \|\chi_j\varphi_hv\|_{L^2(J_j,L^6)}\\
		\nonumber
		&\lesssim \|\chi'\|_{L^\infty}\|\varphi_hv\|_{L^1(J_j,L^2)}+\|\varphi_hF_1\|_{L^1(I_j,L^2)}+\|\varphi_hF_2\|_{L^2(J_j,L^{6/5})}\\
		\nonumber
		&\lesssim h^{-1/2}\|\varphi_h v\|_{L^2(J_j,L^2)}+h^{1/2}\|\varphi_hF_1\|_{L^1(I_j,L^2)}+\|\varphi_hF\|_{L^2(J_j,L^{6/5})}. 
	\end{align}
	It follows from \eqref{lemma_CGT_3_proof_2}--\eqref{lemma_CGT_3_proof_4} that
	\begin{align*}
		\|\varphi_h v\|_{L^2(I,L^6)}
		&\le \Big(\sum_{j=1}^N\|\varphi_h v\|_{L^2(I_j,L^6)}^2\Big)^{1/2}\\
		&\lesssim \|\varphi_h v(b)\|_{L^2}+\|\varphi_h v(c)\|_{L^2}+h^{-1/2}\|\varphi_hv\|_{L^2(I,L^2)}\\
		&\quad+h^{1/2}\|\varphi_hF_1\|_{L^2(I,L^2)}+\|\varphi_hF_2\|_{L^2(I,L^{6/5})},
	\end{align*}
	which implies the desired assertion. 
\end{proof}

\begin{proof}[Proof of Proposition \ref{prop:CGT}]
	Let $s=1+\epsilon$. We may assume $0<\epsilon<1$. Thanks to the blow-up alternative in Proposition \ref{prop:LWP} and the uniform estimates \eqref{equ:uniformenergy}, the global existence follows from the claim that
	$
	\sup_{t\in [0,T]}\|u(t)\|_{H^s}
	$ remains bounded as long as $T$ remains bounded. Once we have the global existence, \eqref{equ:uniformenergy_2} follows from \eqref{equ:uniformenergy}. The proof of this claim follows the same line as that of \cite[Theorem 3]{BGT}. We first prove this claim by assuming $u\in C([0,T],L^\infty(\R^3))$ and then, for general cases, by an approximation argument which goes back to Yudovitch \cite{Yudovich}. 
	
	At first, for any time interval $I\subset [0,T]$ with $|I|\le1$, we prove
	\begin{align}
		\label{prop:CGT_proof_1}
		\|u\|_{L^2(I,L^\infty)}\lesssim 1+|I|^{1/2}\{\log(2+\|u\|_{L^2(I, H^s)})\}^{1/2}
	\end{align}
	for any $H^{s}$ solution, where the implicit constant depends on $u_0$ through the norm $\|u_0\|_{H^1}$ only and independent of $I$.
	To this end, applying Lemma \ref{lemma_CGT_3} to $u$ with $(F_1,F_2)=(-iau,|u|^2u)$ and using Lemma \ref{lemma_CGT_1} with $s=1$, we have for any bounded interval $I\subset [0,T]$ (without size restriction)
	\begin{align*}%\label{refine-strichartz}
		\|\varphi_hu\|_{L_t^2(I,L^6)}
		&\lesssim \|\varphi_hu\|_{L^\infty(I,L^2)}+h^{-1/2}\|\varphi_hu\|_{L^2(I,L^2)}\\
		&\quad+ h^{1/2}\|\varphi_h au\|_{L^2(I,L^2)}+\|\varphi_h|u|^2u\|_{L^2(I,L^{6/5})}\\
		&\lesssim h\|\varphi_hu\|_{L^\infty(I,H^1)}+h^{1/2}\|\varphi_hu\|_{L^2(I,H^1)}\\
		&\quad+ h^{3/2}\|\varphi_h au\|_{L^\infty(I,H^1)}+h\|\varphi_h|u|^2u\|_{L^\infty(I,W^{1,6/5})}, 
	\end{align*}
	where the implicit constants depend on $I$ through its length $|I|$ only. Since the first, third and last terms of the RHS satisfies
	$$
	\|\varphi_hu\|_{H^1}+\|\varphi_h au\|_{H^1}\lesssim (1+\|a\|_{W^{1,\infty}})\|u\|_{H^1},
	$$
	and
	\begin{align*}
		\|\varphi_h|u|^2u\|_{W^{1,6/5}}
		&\lesssim \||u|^2u\|_{L^{6/5}}+\||u|^2\nabla u\|_{L^{6/5}}\\
		&\lesssim \||u|^2\|_{L^3}\|u\|_{L^2}+\||u|^2\|_{L^3}\|\nabla u\|_{L^2}\\
		&\lesssim \|u\|_{L^6}^2\|u\|_{H^1}\lesssim \|u\|_{H^1}^3,
	\end{align*}
	if we write $\Lambda(u)=\|u\|_{L^\infty(I,H^1)}+\|u\|_{L^\infty(I,H^1)}^3$, then
	\begin{align}
		\label{refine-strichartz}
		\|\varphi_hu\|_{L_t^2(I,L^6)}\lesssim h^{1/2}\|\varphi_hu\|_{L^2(I,H^1)}+h\Lambda(u),\quad h\in (0,1].
	\end{align}
	Here we should stress that if $|I|\le1$ then the implicit constant is independent of $I$. With this estimate, Lemma \ref{lemma_CGT_1} and the uniform bound
	$$
	\sup_{t\in [0,T]}\|u(t)\|_{H^1}\lesssim E(u_0)+M(u_0)
	$$
	which follows from Proposition \ref{proposition_uniform}, one can follow exactly the same argument as that in the proof of \cite[Thoerem 3]{BGT} to obtain \eqref{prop:CGT_proof_1}. 
	
	Now we use the additional condition $u\in C([0,T],L^\infty)$ to observe that the Duhamel formula for $u$ and \eqref{prop_LWP_proof_2} imply
	$$
	\|u\|_{L^\infty(I,H^s)}\lesssim \|u(b)\|_{H^s}+\int_I(1+\|u(t)\|_{L^\infty}^2)\|u(t)\|_{H^s}ds,\quad  b:=\min I.
	$$
	Then, Gronwall's lemma and \eqref{prop:CGT_proof_1} yield
	\begin{align*}
		\|u\|_{L^\infty(I,H^s)}
		\le C\|u(b)\|_{H^s}\exp(C\|u\|_{L^2(I,L^\infty)}^2)
		%&\le C\|u(b)\|_{H^s}\exp(C\{1+|I|\log(2+\|u\|_{L^2(I,H^s)})\})\\
		\le C\|u(b)\|_{H^s}(2+\|u\|_{L^2(I,H^s)})^{C|I|}
	\end{align*}
	with some $C>0$ independent of $I$. If $C|I|\le1/2$, then 
	$$\gamma^2-C^2\|u(b)\|_{H^s}^2\gamma -2C^2\|u(b)\|_{H^s}^2\le0,$$
	where $\gamma=\|u\|_{L^\infty(I,H^s)}^2$. Solving this quadratic inequality shows
	\begin{align*}
		\|u\|_{L^\infty(I,H^s)}\le \tilde C (\|u(b)\|_{H^s}+\|u(b)\|_{H^s}^2).
	\end{align*}
	with some $\tilde C>0$. Applying this estimate successively starting from the interval $I=[0,\frac{1}{2C}]$, we conclude that $\sup_{t\in [0,T]}\|u(t)\|_{H^s}
	$ remains bounded as $T$ remains bounded. This completes the proof for smooth solutions. 
	
	%For non-smooth solutions $u$ (which do not necessarily belong to $C([0,T],L^\infty)$), a standard approximation argument and the persistence of regularity in Proposition \ref{prop:LWP} imply that there exists a family $\{u^\varepsilon\}$ of smooth solutions so that, by applying the above argument to $u^\varepsilon$, 

	%By regularization argument, $u^\varepsilon$ is smooth and global. In particular, one can prove that $u^\varepsilon\to u$ and the limit equation belongs to $L_{\rm loc}^\infty H^s\cap L^2_{\rm loc}L^\infty$. \begin{color}{blue}[Mizutani: the above two sentences do not make sense]\end{color}

	To deal with non-smooth solutions, we first show the uniqueness of weak $H^1$-solutions via Yudovich's argument \cite{BGT,Yudovich}, which consists of the following two steps: 
	\begin{enumerate}
		\item[(i)] For any given $M,T>0$ and arbitrary $\delta>0$, there exists $\varepsilon>0$ such that if $u$ and $\tilde{u}$ are two (mild) solutions to \eqref{equ:DMNLS} on $[0,T]$ with initial data $u(0)$ and $\tilde{u}(0)$ respectively, such that $$\|u(0)-\tilde{u}(0)\|_{L^2(\R^3)}\leq\varepsilon$$ and 
		\begin{align}
			\label{Yudovich_1}
			\|u\|_{L^\infty((0,T),H^1(\R^3))}\leq M,\quad 
			\|\tilde u\|_{L^\infty((0,T),H^1(\R^3))}\leq M,
		\end{align}
		then we have
		\begin{align*}
			\sup_{t\in[0,T]}\big\|u(t)-\tilde{u}(t)\big\|_{L^2(\R^3)}\leq\delta.
		\end{align*}    
		\item[(ii)] The solution $u$ to \eqref{equ:DMNLS} is indeed unique in $L^\infty ((0,T],H^1(\R^3))$. 
	\end{enumerate}
	We will prove Claim (i) first. To show this, we first prove that there exists $t_0>0$ depending only on $M$ such that, for any $\delta>0$, there exists $\varepsilon>0$ such that
	\begin{align}\label{reduction1}
		\|\tilde{u}(0)-u(0)\|_{L^2}\leq\varepsilon\Longrightarrow	\sup_{t\in[0,t_0]}\|\tilde{u}(t)-u(t)\|_{L^2}<\delta.
	\end{align}With \eqref{reduction1} at hand, since $t_0$ is independent of $T$, we can split the interval $[0,T]$ into $t_0^{-1}T$ pieces of subintervals. Then Claim (i) follows directly. To deduce \eqref{reduction1}, we need an $L^2L^{2p}$ estimate of the form 
	\begin{align}\label{refine-strichartz-2}
		\|u\|_{L_t^2([0,T],L^{2p}(\R^3))}\leq C(\sqrt{Tp}+1)
	\end{align}
	for any $p\ge6$, which can be derived by using \eqref{refine-strichartz}, \eqref{prop:CGT_proof_1} and summing over $k$ where  $h=2^{-k}$ (see \cite[Proof of Theorem 3]{BGT}). Now we start to prove \eqref{reduction1}. Define
	\begin{align*}
		g(t):=\|u(t)-\tilde{u}(t)\|_{L^2}^2.
	\end{align*}
	Since $a\ge0$, we have for $p>2$, 
	\begin{align*}
		\frac{d}{dt}g(t)&\le 2\Im \langle|u(t)|^2u(t)-|\tilde{u}(t)|^2\tilde{u}(t),u(t)-\tilde{u}(t)\rangle_{L^2}\\
		&\lesssim \int_{\R^3}|u(t)-\tilde{u}(t)|^2(|u(t)|^2+|\tilde{u}(t)|^2)\\
		&\lesssim \| u(t)-\tilde{u}(t)\|_{L^{\frac{2p}{p-1}}(\R^3)}^2\big(\| u(t)\|_{L^{2p}(\R^3)}^2+\|\tilde{u}(t)\|_{L^{2p}(\R^3)}^2\big). 
	\end{align*}
	Since $2<\frac{2p}{p-1}<6$, by interpolating between $L^2$ and $L^6$, where the latter norm is bounded by the embedding $H^1\subset L^6$ and the hypothesis \eqref{Yudovich_1}, we get
	\begin{align*}
		\frac{d}{dt}g(t)\leq C(\|u(t)\|_{L^{2p}}^2+\|\tilde u(t)\|_{L^{2p}}^2)g(t)^{1-\frac{3}{2p}}.
	\end{align*}
	with some $C>0$ depending on $M$. Integrating in $t$, we have
	\begin{align}\label{integral}
		pg(t)^\frac{3}{2p}\lesssim(\|u(t)\|_{L_t^2L^{2p}}^2+\|\tilde u(t)\|_{L_t^2L^{2p}}^2)+pg(0)^\frac{3}{2p}.
	\end{align}
	On the other hand, using \eqref{refine-strichartz-2}, we obtain
	\begin{align*}
		pg(t)^\frac{3}{2p}\leq C(tp+1)+pg(0)^\frac{3}{2p}.
	\end{align*}
	Then, we have
	\begin{align*}
		g(t)\leq \Big(C(t+\frac1p)+g(0)\Big)^\frac{2p}{3}\leq 2^\frac{2p}{3}g(0)+\big(2C(t+\frac1p)\big)^\frac{2p}{3}.
	\end{align*}
	For the sufficiently small $t_0$ and $t\in[0,t_0]$, we get
	\begin{align*}
		g(t)\leq2^\frac{2p}{3}g(0)+\big(\frac{1}{2}+\frac{2C}{p}\big)^\frac{2p}{3}.
	\end{align*}
	Taking $p$ large enough and, then taking $\varepsilon=\varepsilon(p)$ small enough, we have
	\begin{align*}
		g(t)\leq \frac{\delta}{2}+2^\frac{2p}{3}\varepsilon^2\le \delta. 
	\end{align*}
	This completes the proof of Claim (i). Taking $g(0)=0$, a similar argument as above and density, we can obtain that $u=\tilde{u}$ in $L^\infty H^1$. 
	
	Finally, we utilize a regularization argument to treat the case $u_0\in H^s$ for $s>1$. Let $u\in C([0,T_{\mathrm{max}}),H^s)$ be a maximal solution to \eqref{equ:DMNLS}. We can approximate $u_0$ in $H^s$ by a sequence  $\{u_0^n\}_{n\ge1}\subset H^k$ with some large $k>3/2$. Let $u^n$ be the associated solution to \eqref{equ:DMNLS}. It follows from the previous argument for smooth solutions that $\{u^n\}$ is bounded in $L^\infty([0,\infty), H^1) \cap L_{\mathrm{loc}}^\infty([0,\infty), H^s)$. Using Claim (i), we obtain that $\{u^n\}$ is Cauchy in $C([0,T],L^2)$ for any $T>0$. Since $u^{n}_{0} \to u_0$ in $L^{2}$, it follows that $u^n\to v$ in $C([0,T], L^{2})$ with some $v\in L^\infty([0,\infty), H^1) \cap L_{\mathrm{loc}}^\infty([0,\infty), H^s)$. By using the interpolation argument twice, we observe that, for any $T>0$, $u^n\to v$ in $C([0,T], H^\sigma)$ for any $0\le \sigma<1$ and then for any $0\le \sigma<s$. Finally, it follows from the Sobolev embedding that $|u^n|^2u^n-|v|^2v\to 0$ in $C([0,\infty),L^2)$. Therefore, $v$ is a weak $H^1$-solution to \eqref{equ:DMNLS}. By the above uniqueness, we conclude $u=v$.
\end{proof}

%subsection		
\subsection{Uniform energy bound and local energy decay}\label{subsec:UnienerLoc}
\label{section_local_energy_decay}	
This section is devoted to the proof of Proposition \ref{proposition_uniform}. We also prove a local energy decay used in the proof of Theorem~\ref{T:main}. 

We begin with the following basic mass and energy identities: 
\begin{lemma}[mass and energy identities]\label{lem:maenerg}
	Let $u\in C([0,T], H^{1+\epsilon})$ be a solution to \eqref{equ:DMNLS}. Then, for any $0\leq t_0\leq t<T$, 
	\begin{align}\label{equ:mass}
		M(u(t))-M(u(t_0))&=-2\int_{t_0}^t\int_{\R^3}a(x)|u(s,x)|^2\,dx\,ds,\\
		\label{equ:energy}
		E(u(t))-E(u(t_0))&
		=-\int_{t_0}^t\int_{\R^3}a(x)\big(G\nabla_A u\cdot \overline{\nabla_Au}+|u|^{4}\big)\,dx\,ds\\\nonumber
		&-\mathop{\mathrm{Re}} \int_{t_0}^t\int_{\R^3}\bar{u} G\nabla a\cdot\nabla_Au\,dx\,ds\\\nonumber
		&=-\int_{t_0}^t\int_{\R^3}a(x)\big(G\nabla_A u\cdot \overline{\nabla_Au}+|u|^{4}\big)\,dx\,ds\\\nonumber
		&+\frac12\mathop{\mathrm{Re}} \int_{t_0}^t\int_{\R^3}|u|^2\nabla\cdot G\nabla a\,dx\,ds.
	\end{align}
	In particular,
	\begin{equation}\label{equ:axmasscon}
		\int_0^{T}\int_{\R^3}a(x)|u(s,x)|^2\,dx\,ds\leq\frac{M(u_0)}2.
	\end{equation}
	
\end{lemma}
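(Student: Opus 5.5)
The plan is to differentiate $M(u(t))$ and $E(u(t))$ in time along the flow and then integrate from $t_0$ to $t$. We first work with smooth solutions, say $u_0\in H^k$ with $k$ large, so that all pairings below are legitimate. Persistence of regularity in Proposition~\ref{prop:LWP} supplies such solutions. We then pass to $u_0\in H^{1+\epsilon}$ by the continuous dependence of Proposition~\ref{prop:LWP}. Both sides of the identities are continuous in $C([0,T],H^1)$: the quantities $M$ and $E$ are continuous on $H^1$, and the space–time integrals are continuous because $a\in C_c^\infty$, $G$ and $\nabla a$ are bounded, and $H^1\subset L^4$.

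For the mass, write the equation as $\partial_t u=i\Delta_G u-au-i|u|^2u$. Then
\[
\frac{d}{dt}M(u)=2\Re\langle \partial_t u,u\rangle=2\Re\langle i\Delta_G u,u\rangle-2\int a|u|^2-2\Re\, i\!\int|u|^4.
\]
The first term vanishes because $\Delta_G$ is symmetric, so $\langle\Delta_G u,u\rangle=-\int G\nabla_A u\cdot\overline{\nabla_A u}$ is real. The last term is purely imaginary, so its real part is zero. Integrating in time gives \eqref{equ:mass}. Since $M\ge0$, taking $t_0=0$ and $t\to T$ yields \eqref{equ:axmasscon}.

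For the energy, observe that $E'(u)=-\Delta_G u+|u|^2u$ with respect to the real pairing $\Re\langle\cdot,\cdot\rangle$. Hence
\[
\frac{d}{dt}E(u)=\Re\langle \partial_t u,\,-\Delta_G u+|u|^2u\rangle .
\]
We split $\partial_t u=-iN(u)-au$, where $N(u)=-\Delta_G u+|u|^2u$. The Hamiltonian part contributes $\Re(-i\|N(u)\|^2)=0$. The damping part is $-\Re\langle au,-\Delta_G u+|u|^2u\rangle$. Its nonlinear piece is $-\int a|u|^4$ directly. For the linear piece we integrate by parts once:
\[
\Re\langle au,\Delta_G u\rangle=-\Re\int G\nabla_A u\cdot\overline{\nabla_A(au)} .
\]
Using $\nabla_A(au)=a\nabla_A u+u\nabla a$, this produces $-\int a\,G\nabla_A u\cdot\overline{\nabla_A u}$ together with $-\Re\int \bar u\,G\nabla a\cdot\nabla_A u$. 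The symmetry of $G$ lets us move the conjugate to the other factor inside the real part. This gives the first form of \eqref{equ:energy}.

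For the second form, note that $\Re(\bar u\,\nabla_A u)=\Re(\bar u\nabla u)=\tfrac12\nabla|u|^2$, because $\Re(-iA|u|^2)=0$. One more integration by parts, with $G\nabla a$ compactly supported, turns $-\Re\int \bar u\,G\nabla a\cdot\nabla_A u$ into $\tfrac12\int|u|^2\,\nabla\cdot(G\nabla a)$.

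The only delicate point is justifying the differentiation of $E$ at the $H^{1+\epsilon}$ level, since $\Delta_G u$ is merely in $H^{-1+\epsilon}$. This is exactly why we route the computation through the regularization and continuous-dependence argument described at the outset rather than differentiating directly.
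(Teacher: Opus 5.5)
Your proposal is correct and follows essentially the same route as the paper. You derive the identities for high-regularity solutions by differentiating $M$ and $E$ along the flow, which is equivalent to the paper's multiplication of \eqref{equ:DMNLS} by $\overline u$ and $\overline{\partial_t u}$. You then pass to $H^{1+\epsilon}$ solutions by approximation, and \eqref{equ:axmasscon} follows from $M(u(t))\ge0$. Your explicit computations check out, including the sign of the $\nabla a$ term and its conversion through $\Re(\bar u\nabla_A u)=\tfrac12\nabla|u|^2$. The only step left implicit, in both versions, is the routine extension of continuous dependence from the local existence time to all of $[0,T]$.
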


\begin{proof}
	For smooth solutions, \eqref{equ:mass} is obtained by multiplying \eqref{equ:DMNLS} by $\overline u$, integrating in $t,x$ and taking the imaginary part, while \eqref{equ:energy} can be verified by multiplying \eqref{equ:DMNLS} by $\overline{\partial_t u}$, integrating in $t,x$ and taking the real part. Moreover, these identities also hold for general $C([0,T], H^{1+\epsilon})$ solutions by a standard approximation argument. Since $M(u(t))\le M(u_0)$ by \eqref{equ:mass}, \eqref{equ:axmasscon} follows. 
\end{proof}	

Consider next the function $u$ solves
\begin{equation}\label{equ:nls}
	i\pa_tu+\Delta_A u=|u|^2u+F(t,x),\quad (t,x)\in\R\times\R^3.
\end{equation}
For a radially symmetric real-valued function $\rho$ satisfying $\nabla \rho\in L^\infty(\R^3)$, we define the Morawetz action
\begin{equation}\label{equ:moract}
	M_\rho(t):=2\Im \int_{\R^3}(\nabla \rho)(x)\cdot (\nabla_A
	u)(x)\bar{u}(x)\,dx.
\end{equation}
By the same argument as in \cite[Theorem 1.2]{Fanelli-Vega} and \cite[Lemma 3.1]{CCL}, we get
\begin{lemma}[Virial identity for \eqref{equ:nls}]\label{lem:moriden}
	There holds
	\begin{align*}
		\frac{d}{dt}M_\rho(t)&=\int_{\R^3}(-\Delta^2\rho)|u|^2dx
		+4\int_{\R^3}(\nabla^2\rho) \nabla_Au\cdot\overline{\nabla_Au}dx
		+\int_{\R^3}(\Delta\rho)|u|^4\,dx\\
		&+4\Im \int_{\R^3}\rho_ruB_\tau \cdot \overline{\nabla_A u}dx+2\sum_{j=1}^3\int_{\R^3}\rho_j \{F,u\}_P^jdx
	\end{align*}
	where $\rho_j=\partial_{x_j}\rho$, $\nabla^2\rho=(\partial_j\partial_k\rho)_{j,k=1}^3$, $\rho_r=\frac{x}{|x|}\cdot\nabla\rho$ and
	$$\{f,g\}_P=\Re(f\overline{\nabla_A g}-g\overline{\nabla_Af}).$$ % % The red term is new, we should be careful to control this term. Maybe one needs to use Luca-Vega's local smoothing estimate to control this term.
\end{lemma}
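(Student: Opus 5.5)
The plan is a direct computation: differentiate $M_\rho(t)$ in time, use the equation \eqref{equ:nls} to replace $\partial_t u$, and integrate by parts with magnetic derivatives. The computation is first done for smooth, rapidly decaying solutions; the general case then follows by the same approximation argument used in Lemma~\ref{lem:maenerg}. I will use the Leibniz rule $\partial_j(f\bar g)=(D_jf)\bar g+f\overline{D_jg}$ and the commutator identity $[D_j,D_k]=-i\Fcurv_{jk}$. From \eqref{equ:nls},
\[
\partial_t u=i\Delta_A u-i|u|^2u-iF .
\]
Substituting this into $\frac{d}{dt}M_\rho=2\Im\int \rho_j\,(D_j\partial_t u\,\bar u+D_ju\,\overline{\partial_t u})$ splits the derivative into three pieces: a linear piece, a nonlinear piece and a forcing piece. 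Each is treated separately.

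\textbf{Linear piece.} This is the computation of \cite[Theorem 1.2]{Fanelli-Vega}. After symmetrising and integrating by parts twice, the terms where derivatives fall on $\rho$ produce
\[
4\int \rho_{jk}\,D_ju\,\overline{D_ku}\qquad\text{and}\qquad -\int \Delta^2\rho\,|u|^2 .
\]
The remaining commutator terms come from $D_j D_k D_k$ versus $D_k D_k D_j$. These yield
\[
4\Im\int \rho_j\,\Fcurv_{jk}\,u\,\overline{D_ku}.
\]
Since $\rho$ is radial, $\nabla\rho=\rho_r\,x/|x|$. The contraction $\rho_j\Fcurv_{jk}$ is therefore $\rho_r\bigl(\tfrac{x}{|x|}\wedge B\bigr)_k$, up to the sign convention, which is exactly $\rho_r B_\tau$. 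This gives the magnetic term $4\Im\int\rho_r\,u\,B_\tau\cdot\overline{\nabla_Au}$.

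\textbf{Nonlinear and forcing pieces.} The nonlinear piece uses $\Re(|u|^2u\,\overline{D_ju})=\tfrac14\partial_j|u|^4$. The magnetic phase cancels here because $|u|^2$ is real. One integration by parts then gives $\int\Delta\rho\,|u|^4$. The forcing piece collects into
\[
2\Re\int\rho_j\bigl(F\,\overline{D_ju}-u\,\overline{D_jF}\bigr)
\]
after moving $D_j$ off $F$. This produces the boundary-free expression $2\sum_j\int\rho_j\{F,u\}_P^j$. The same manipulation is used in \cite[Lemma 3.1]{CCL}.

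\textbf{Main obstacle.} The delicate part is the sign and factor bookkeeping in the commutator term. One must verify that the antisymmetric part of the third-order terms contributes exactly $4\Im\int\rho_ru B_\tau\cdot\overline{\nabla_Au}$, and that the Coulomb gauge is not needed. I will check this by writing everything with $D_j$ only, so that $\operatorname{div}A$ never appears.
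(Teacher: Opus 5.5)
Your strategy is the one the paper relies on: it gives no computation of its own and defers to \cite[Theorem 1.2]{Fanelli-Vega} and \cite[Lemma 3.1]{CCL}. Your nonlinear and forcing pieces check out. Your remark that $\div A=0$ is never used is also correct. Since $\Delta_A=\sum_kD_kD_k$ and every term of the identity is gauge-covariant, the paper's remark after the lemma is more cautious than necessary.

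The real problem is the step you single out yourself, and ``up to the sign convention'' cannot close it. The conventions $D_j=\partial_j-iA_j$, $\Fcurv_{jk}=\partial_jA_k-\partial_kA_j$ and $B_\tau=\frac{x}{|x|}\wedge\curl A$ are all fixed, and both of your intermediate claims have the wrong sign.

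After the first integration by parts one has $\frac{d}{dt}M_\rho=-2\int\Delta\rho\,\Re(L\bar u)-4\int\rho_j\Re(D_ju\,\overline{L})$, with $L=\Delta_Au-|u|^2u-F$. For $L=\Delta_Au$, integrating by parts in $k$ in the second term gives $4\int\rho_{jk}\Re(D_ju\,\overline{D_ku})+4\Re\int\rho_j(D_kD_ju)\overline{D_ku}$. The $\Delta\rho|\nabla_Au|^2$ contributions cancel. Since $[D_j,D_k]=-i\Fcurv_{jk}$, one has $D_kD_j=D_jD_k+i\Fcurv_{jk}$, so the curvature contribution is
\[
4\Re\int i\rho_j\Fcurv_{jk}\,u\,\overline{D_ku}\,dx=-4\Im\int\rho_j\Fcurv_{jk}\,u\,\overline{D_ku}\,dx=4\int\rho_j\Fcurv_{jk}\Im(\bar u\,D_ku)\,dx .
\]
This is the Lorentz-force term, and it matches the sign in \eqref{equ:main5-new}. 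It is not $+4\Im\int\rho_j\Fcurv_{jk}u\overline{D_ku}$, as you wrote.

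The same happens in the identification step. Write $\Fcurv_{jk}=\epsilon_{jkl}B_l$, with $B=\curl A$, $\epsilon$ the Levi-Civita symbol and $\omega=x/|x|$. Then $\omega_j\Fcurv_{jk}=-\epsilon_{kjl}\omega_jB_l=-(\omega\wedge B)_k$, hence $\rho_j\Fcurv_{jk}=-\rho_r(B_\tau)_k$, not $+\rho_r(B_\tau)_k$.

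The two sign errors cancel, which is why you still reach the correct term $+4\Im\int\rho_r\,u\,B_\tau\cdot\overline{\nabla_Au}$. As written, though, the derivation of that term is wrong in both places. Replacing your hedge with the two computations above completes the proof.
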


%remark
\begin{remark}
	We have used the Coulomb gauge condition $\div A=0$ to derive this Virial identity; otherwise, several  additional terms which dot have sign appear. 
\end{remark}

We shall apply this lemma to \eqref{equ:DMNLS}. Since
\begin{align*}
	\{iau,u\}_P&=-2a\Im (u\overline{\nabla_Au})
\end{align*}
and, by the symmetry of $G$, 
\begin{align}\label{equ:errtermMore}	
	&2\int_{\R^3}\nabla\rho\cdot\{(\Delta_A-\Delta_G)u,u\}_P\,dx\\\nonumber
	&=2\mathop{\mathrm{Re}} \int_{\R^3}(\nabla\Delta\rho)\cdot(G-I)\bar{u}\nabla_Au\,dx+4\mathop{\mathrm{Re}} \int_{\R^3}(G-I)\nabla_Au\cdot(\nabla^2\rho)\overline{\nabla_Au}\\\nonumber
	&-2\mathop{\mathrm{Re}} \int_{\R^3}\rho_i\overline{D_ju}\pa_iG_{jk}D_ku\,dx\\\nonumber
	&=: \epsilon(t), 
\end{align}
we choose $F=-iau+(\Delta_A-\Delta_G)u$ to obtain

\begin{lemma}[Virial identity for \eqref{equ:DMNLS}]\label{lem:Moraw}
	Let $u_0\in H^{1+\epsilon}(\R^3)$ and $u\in C([0,T), H^{1+\epsilon})$ be a solution to \eqref{equ:DMNLS}. Then, for any $0\leq s\leq t<T$, there holds
	\begin{align*}\nonumber
		\frac{d}{dt}M_\rho(t)&=\int_{\R^3}\big(-\Delta^2\rho\big)|u|^2dx+4\int_{\R^3}(\nabla^2\rho) \nabla_A u\cdot \overline{\nabla_A u}dx+\int_{\R^3}(\Delta \rho)|u|^{4}dx\\
		&+
		4\Im \int_{\R^3}a\rho_ru\frac{x}{|x|}\cdot\overline{\nabla_A u}dx
		%		\label{equ:perMor}
		+4\Im \int_{\R^3}\rho_ruB_\tau \cdot \overline{\nabla_A u}dx
		+ \epsilon(t),
	\end{align*}
	where $\epsilon(t)$ is defined as above and satisfies
	$$| \epsilon(t)|\lesssim \int_{ \mathop{\mathrm{supp}} (G-I)}\big(|\nabla u(t,x)|^2+|u(t,x)|^2\big)\,dx.$$
	
\end{lemma}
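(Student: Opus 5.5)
The plan is to apply Lemma~\ref{lem:moriden} to a solution of \eqref{equ:DMNLS}, rewritten in the form \eqref{equ:nls}. Since $i\partial_t u+\Delta_G u+iau=|u|^2u$, we can write
\[
i\partial_tu+\Delta_Au=|u|^2u+F,\qquad F:=-iau+(\Delta_A-\Delta_G)u .
\]
We first argue for smooth solutions, e.g.\ with $u_0\in H^k$ for $k$ large, using persistence of regularity from Proposition~\ref{prop:LWP}. The identity for $C([0,T),H^{1+\epsilon})$ solutions then follows by approximation, by continuity of the data-to-solution map and because every term is continuous in $H^1$. The forcing enters Lemma~\ref{lem:moriden} only through $2\sum_j\int\rho_j\{F,u\}_P^j$, which is bilinear in $F$. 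We therefore handle the two pieces of $F$ separately.

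\textbf{Damping piece.} We use the formula stated before the lemma, $\{iau,u\}_P=-2a\Im(u\overline{\nabla_Au})$, where the gradient of the real function $a$ cancels out of the Poisson bracket. With $\nabla\rho=\rho_r\frac{x}{|x|}$ for radial $\rho$, the contribution of $-iau$ becomes $4\Im\int a\rho_r u\frac{x}{|x|}\cdot\overline{\nabla_Au}\,dx$. This is exactly the damping term in the statement. The sign has to be tracked carefully, using $\Im(u\bar v)=-\Im(\bar u v)$.

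\textbf{Metric piece.} For $(\Delta_A-\Delta_G)u=-\nabla_A\cdot((G-I)\nabla_Au)$, we integrate by parts in $2\int\nabla\rho\cdot\{(\Delta_A-\Delta_G)u,u\}_P$. We use the Leibniz rules $\partial_j(f\bar g)=D_jf\,\bar g+f\,\overline{D_jg}$ and the symmetry of $G$. The output is three terms:
\begin{itemize}
\item a term $\nabla\Delta\rho\cdot(G-I)\bar u\nabla_Au$, coming from the derivatives falling twice on $\rho$;
\item a Hessian term $(G-I)\nabla_Au\cdot\nabla^2\rho\,\overline{\nabla_Au}$;
\item a term involving $\partial_iG_{jk}$.
\end{itemize}
This gives \eqref{equ:errtermMore}, i.e.\ $\epsilon(t)$.

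The main obstacle is the commutator $D_jD_k-D_kD_j=-i\mathcal F_{jk}$, which arises when moving $D_j$ past $D_k$. I expect the resulting curvature contributions, which pair $\rho_i$ with $(G-I)_{jk}\mathcal F_{ij}u\,\overline{D_ku}$, to also be supported in $\supp(G-I)$. They can then be absorbed into $\epsilon(t)$, or shown to cancel after taking the real part by the antisymmetry of $\mathcal F$.

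\textbf{Bound on $\epsilon(t)$.} Every term of $\epsilon(t)$ carries a factor $G-I$ or $\partial G$, so its integrand is supported in $\supp(G-I)$. The coefficients $\nabla\Delta\rho$, $\nabla^2\rho$, $\nabla\rho$ and $\partial G$ are bounded; this requires assuming $\rho$ has bounded derivatives up to third order on that compact set, as for the multipliers used later. By Cauchy--Schwarz,
\[
|\epsilon(t)|\lesssim\int_{\supp(G-I)}\bigl(|\nabla_Au|^2+|u|^2\bigr)\,dx .
\]
Finally $|\nabla_Au|\le|\nabla u|+\|A\|_{L^\infty}|u|$, which gives the stated bound.
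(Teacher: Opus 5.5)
Your route is the paper's. You apply Lemma~\ref{lem:moriden} with $F=-iau+(\Delta_A-\Delta_G)u$, read off the damping term from $\{iau,u\}_P=-2a\Im(u\overline{\nabla_Au})$, and integrate the metric piece by parts as in \eqref{equ:errtermMore}.

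One correction: of your two options for the commutator term, only absorption into $\epsilon(t)$ works. In the integration by parts, the identity $\overline{D_jD_iu}=\overline{D_iD_ju}+i\Fcurv_{ji}\bar u$ produces, in addition to the three terms of \eqref{equ:errtermMore}, the term
\[
-4\Im\int_{\R^3}\rho_i\,\Fcurv_{ji}\,(G-I)_{jk}\,\bar u\,D_ku\,dx .
\]
This term does not cancel by antisymmetry. For instance, for $G-I=hI$ it equals $-4\int h\,\rho_i\Fcurv_{ji}\Im(\bar u D_ju)\,dx$, which is generically nonzero. It is also missing from the three-term display \eqref{equ:errtermMore} in the paper. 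So $\epsilon(t)$ must be understood as the full quantity $2\int\nabla\rho\cdot\{(\Delta_A-\Delta_G)u,u\}_P\,dx$. The extra piece is bounded by $\|\Fcurv\|_{L^\infty}\|\nabla\rho\|_{L^\infty}\int_{\supp(G-I)}|u||\nabla_Au|\,dx$, and $\Fcurv\in L^\infty$ by Assumption~\ref{assum:Assumpcond}, so the stated bound on $\epsilon(t)$ still holds.

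A smaller caveat: you are right that the bound requires $\nabla^2\rho$ and $\nabla\Delta\rho$ to be bounded on $\supp(G-I)$. However, your parenthetical ``as for the multipliers used later'' is true only if $0\notin\supp(G-I)$. For $\rho=R\rho_0(|x|/R)$ with $c_0>0$, one has $|\nabla^2\rho|\sim c_0|x|^{-1}$ and $|\nabla\Delta\rho|=2c_0|x|^{-2}$ for $0<|x|<R$. The paper also passes over this point.
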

%
%{\bf Analysis:} If we choose $\rho(x)=\sqrt{1+|x|^2}$, then we have
%\begin{equation*}
%-\Delta^2\rho(x)=\frac{(d-1)(d-3)}{(1+|x|^2)^\frac32}+\frac{6(d-3)}{(1+|x|^2)^\frac52}+\frac{15}{(1+|x|^2)^\frac72}.
%\end{equation*}
%When $d=3$, we get
%$$-\Delta^2\rho(x)=\frac{15}{(1+|x|^2)^\frac72},$$
%which is accepted.

We now prove Proposition \ref{proposition_uniform}.

%proof
\begin{proof}[Proof of Proposition \ref{proposition_uniform}]
	The uniform bound $M(u(t))\le M(u_0)$ easily follows from \eqref{equ:mass}  since $a(x)\ge0$. To deal with the energy $E(u(t))$, we combine the argument by \cite{LS25} with the multiplier technique by \cite{Fanelli-Vega}. For a constant $R>0$, we define
	\begin{align*}
		\rho(x):=R\rho_0\left(\frac rR\right),
	\end{align*}
	where $r=|x|$ and, with some constant $c_0>0$ specified later, $\rho_0$ is given by
	\begin{align*}
		\rho_0(r)=\int_0^x\rho_0'(s)\,ds,\quad \rho_0'(r):=\begin{cases}
			c_0+\frac13r,&r\leq1,\\
			c_0+\frac12-\frac{1}{6r^2},&r>1. 
		\end{cases}
	\end{align*}
	The following basic properties of $\rho$ will be used frequently: 
	\begin{align*}
		\rho_r
		&=\begin{cases}
			c_0+\frac{r}{3R}&\text{for}\ r\leq R,\\
			c_0+\frac12-\frac{R^2}{6r^2}&\text{for}\ r>R,
		\end{cases}\quad 
		\rho_{rr}
		=\begin{cases}
			\frac{1}{3R}&\text{for}\ r\leq R,\\
			\frac{R^2}{3r^3}&\text{for}\ r>R,
		\end{cases}\\
		\Delta\rho&=\rho_{rr}+2r^{-1}\rho_r\ge0,\quad
		\Delta^2\rho=-4\pi\delta_{x=0}-R^{-2}\delta_{|x|=R}.
	\end{align*}
	We also recall the following formula:
	\begin{align*}
		(\nabla^2\rho) \nabla_A u\cdot \overline{\nabla_A u}
		=\rho_{rr}|\nabla_A^ru|^2+r^{-1}\rho_r|\nabla_A^\tau u|^2,
	\end{align*}
	where $\nabla_A^ru=|x|^{-2}(x\cdot\nabla_A u)x$ and $\nabla_A^\tau u$ is the tangential part of $\nabla_A^\tau u$ so that 
	$$|\nabla_Au|^2=|\nabla_A^ru|^2+|\nabla_A^\tau u|^2,\quad \nabla_A^ru\cdot \nabla_A^\tau u=0,\quad B_\tau\cdot\nabla_Au=B_\tau\cdot\nabla_A^\tau u.$$
	In particular, 
	$$
	(\nabla^2\rho) \nabla_A u\cdot \overline{\nabla_A u}\ge \begin{cases}\frac{1}{3R}|\nabla_Au|^2+\frac{c_0}{|x|}|\nabla_A^\tau u|^2,& |x|\le R,\\\frac{c_0}{|x|}|\nabla_A^\tau u|^2,& |x|\ge R.
	\end{cases}
	$$
	
	Since $a\ge0$ and $\mathop{\mathrm{supp}}a$ is compact, we observe by using \eqref{equ:energy} that
	\begin{align}\label{energy_estimate}
		E(u(t))-E(u_0)
		&\le \frac12\int_0^t\int_{\R^3}\big|\nabla\cdot(G\nabla a)\big||u|^2\,dx\,ds\\
		\nonumber
		&\lesssim \int_0^t\int_{|x|\le R_0}|u|^2\,dx\,ds\\
		\nonumber
		&\lesssim \int_0^t\int_0^{R_0} \int_{|x|=r}|u|^2\,d\omega\,dr\,ds\\
		\nonumber
		&\lesssim \sup_{R>0}\frac{1}{R^2}\int_0^t\int_{|x|=R}|u|^2\,d\omega\,ds,
	\end{align}
	where $\int_{|x|=r}|u|^2d\omega$ denotes the surface integral over the sphere $\{|x|=r\}$ and we have fixed a constant $R_0>1$ such that
	\begin{align}
		\label{supp_a}
		\supp a\subset \{|x|\le R_0-1\}.
	\end{align}
	Let
	$$
	\lambda_1(t):=\sup_{R>0}\frac{1}{R^2}\int_0^t\int_{|x|=R}|u|^2\,d\omega\,ds. 
	$$
	To obtain the desired bound for $E(u(t))$, it is enough to show 
	\begin{equation}\label{equ:lamunibound}
		\sup_{t\in [0,T]}\lambda_1(t)\lesssim E(u_0)+M(u_0).
	\end{equation}
	To this end, using \eqref{equ:moract}, the bound $|\nabla\rho|\le c_0+1/2$ and \eqref{energy_estimate}, we have
	\begin{equation}\label{equ:uppbounMrho}
		\sup_{R>0}\big|M_\rho(t)\big|\leq CM(u(t))^\frac12 E(u(t))^\frac12\leq C_1M(u_0)^\frac12\big(E(u_0)+C_0\lambda_1(t)\big)^\frac12. 
	\end{equation}
	Next, we claim that $M_\rho(t)$ has the following lower bound
	\begin{equation}\label{equ:Mlowbound}
		\sup_{R>0}M_\rho(t)\geq C_2\lambda_1(t)-C_3\big(M(u_0)+E(u_0)\big).
	\end{equation}
	With this claim and the upper bound \eqref{equ:uppbounMrho} at hand, we get
	\begin{align*}
		C_2\lambda_1(t)-C_3\big(M(u_0)+E(u_0)\big)\leq C_1M(u_0)^\frac12\big(E(u_0)+C_0\lambda_1(t)\big)^\frac12.
	\end{align*}
	Hence, with some constants $b,c>0$, 
	$$\lambda_1(t)^2-b\big(M(u_0)+E(u_0)\big)\lambda(t)-c \big(M(u_0)+E(u_0)\big)^2\leq 0,$$
	which implies \eqref{equ:lamunibound}. It remains to prove \eqref{equ:Mlowbound}. Lemma \ref{lem:Moraw}, the above properties of  $\rho$ and the support property $\supp a\subset \{|x|\le R_0\}$ imply
	\begin{align}\label{proof_energy_1}
		M_\rho(t)
		%\nonumber
		& \ge \frac{1}{R^2}\int_0^t\int_{|x|=R}|u|^2\,d\omega\,ds
		+\frac{4}{3R}\int_0^t\int_{|x|\le R}|{\nabla_A u}|^2dx ds\\
		\nonumber
		&\quad +4c_0\int_0^t\int_{\R^3}\frac{|\nabla_A^\tau u|^2}{|x|}dx ds 
		-4\int_0^t\int_{|x|\le R_0}|a\rho_ru\frac{x}{|x|}\cdot\overline{\nabla_A u}|dx ds\\
		\nonumber
		&\quad -4\int_0^t\int_{\R^3}|\rho_ruB_\tau \cdot \overline{\nabla_A u}|dx ds
		-\int_0^t |\epsilon(s)|ds+M_\rho(0),
	\end{align}
	where the first three terms of the RHS are positive and 
	\begin{align}
		\label{proof_energy_2}
		\sup_{R>0}M_\rho(0)\lesssim M(u_0)^{1/2}E(u_0)^{1/2}\lesssim M(u_0)+E(u_0)
	\end{align}
	We set for short
	\begin{align*}
		%D_1&=\sup_{R>0}\frac{1}{R^2}\int_0^t\int_{|x|=R}|u|^2\,d\omega\,ds,\\
		\lambda_2(t)=\sup_{R>0}\frac{4}{3R}\int_0^t\int_{|x|\le R}|{\nabla_A u}|^2dx ds,\quad\lambda_3(t)=\int_0^t\int_{\R^3}\frac{|\nabla_A^\tau u|^2}{|x|}dx ds.
	\end{align*}
	
	For the fourth term of the RHS in \eqref{proof_energy_1}, we use H\"older's inequality, the bound $\rho_r\le c_0+1/2$, \eqref{equ:axmasscon} and Young's inequality to obtain, for any $\delta>0$, 
	\begin{align}
		\label{proof_energy_3}
		&4\int_0^t\int_{|x|\le R_0}|a\rho_r\bar{u}\frac{x}{|x|}\cdot\nabla_A u|dx ds
		\le 4(c_0+1/2)\int_0^t\int_{|x|\le R_0}a|u||\nabla_Au|\,dx\,ds\\\nonumber
		& \le4(c_0+1/2)\|a\|_{L^\infty}^{1/2} \left(\int_0^t\int_{\R^3}a|u|^2\,dx\,ds\right)^{1/2}\left(\int_0^t\int_{|x|\le R_0}|\nabla_Au|^2\,dx\,ds\right)^{1/2}\\\nonumber
		&\le C M(u_0)+\frac{4\delta}{3R_0}\int_0^t\int_{|x|\le R_0}|\nabla_Au|^2\,dx\,ds\\\nonumber
		&\le C M(u_0)+ \delta \lambda_2(t)
	\end{align}
	with some $C=C(c_0,\delta)>0$ independent of $R$. 
	
	For the fifth term related with $B_\tau$, we first deal with the case when the smallness condition \eqref{equ:Btaucond} holds. The case with the control condition \eqref{control_B} will be discussed in the end of the proof. Taking the fact $B_\tau\cdot \nabla_Au=B_\tau\cdot\nabla^\tau_Au$ into account, we have
	\begin{align}
		\label{proof_energy_4}
		&4 \int_0^t\int_{\R^3}|\rho_ruB_\tau \cdot \overline{\nabla_A u}|dx ds\\\nonumber
		&\le (4c_0+2)\left(\int_0^t\int_{\R^3}|x||B_\tau|^2|u|^2dx\right)^{1/2}\left(\int_0^t\int_{\R^3}\frac{|\nabla^\tau_Au|^2}{|x|}dx\right)^{1/2}\\\nonumber
		&\le (4c_0+2)\left(\int_0^t\int_0^\infty \int_{|x|=r}|x||B_\tau|^2|u|^2d\omega\,dr\,ds\right)^{1/2}\lambda_3(t)^{1/2}\\\nonumber
		&\le (4c_0+2) \left(\int_0^\infty \sup_{|x|=r}|x|^3|B_\tau|^2 dr\right)^{1/2}\left(\sup_{R>0}\frac{1}{R^2}\int_0^t \int_{|x|=R}|u|^2d\omega\right)^{1/2}\lambda_3(t)^{1/2}\\\nonumber
		&=(4c_0+2)\||x|^{\frac32}B_\tau\|_{L^2_rL^\infty(S_r)} \lambda_1(t)^{1/2}\lambda_3(t)^{1/2}.
	\end{align}
	
	For the term involving $\epsilon(t)$, we follow the same argument as in \cite{LS25} for which the control condition \eqref{equ:assumpona} plays a crucial role.  The condition \eqref{equ:assumpona}, the energy identity \eqref{equ:energy} and \eqref{equ:axmasscon} imply
	\begin{align*}
		\int_0^t |\epsilon(s)|ds
		&\lesssim \int_0^t\int_{\supp(G-I)}\left(|\nabla_Au|^2+|u|^2\right)\,dx\,ds\\
		&\lesssim \int_0^t\int_{\R^3}a\left(G\nabla_Au\cdot\nabla_Au+|u|^2+|u|^4\right)\,dx\,ds\\
		&\lesssim \int_0^t\int_{\R^3}\left(|\overline uG\nabla a\cdot\nabla_Au|+a|u|^2\right)\,dx\,ds+E(u_0)-E(u(t))\\
		&\lesssim \int_0^t\int_{\R^3}|\overline uG\nabla a\cdot\nabla_Au|\,dx\,ds+E(u_0)+M(u_0)
	\end{align*}
	By  \cite[Lemma 4.4]{YNC}, we know that for any $\delta>0$, there exists $C_\delta>0$ such that
	\begin{equation}\label{equ:nablabound}
		|\nabla a|\leq C_\delta a+\delta\chi_{\supp a+\{|x|\le1\}}.
	\end{equation}
	%where $\psi\in C_c^\infty(\R^3)$ is supported in $\supp a+\{|x|\le1\}\subset \{|x|\le R_0\}$, $\psi\equiv 1$ on $\supp a$ and $0\leq\psi\leq1$. 
	Since $\supp a+\{|x|\le1\}\subset \{|x|\le R_0\}$, the same argument as as above shows
	\begin{align}
		\label{proof_energy_5}
		&\int_0^t\int_{\R^3}|\overline uG\nabla a\cdot\nabla_Au|\,dx\,ds\\\nonumber
		&\le  \|G\|_{L^\infty}\int_0^t\int_{|x|\le R_0}(C_\delta a+\delta)|u||\nabla_Au|\,dx\,ds\\\nonumber
		&\lesssim C_\delta\int_0^t\int_{|x|\le R_0}a|u|^2\,dx\,ds+\delta \int_0^t\int_{|x|\le R_0}|\nabla_Au|^2\,dx\,ds
		+\delta\int_0^t\int_{|x|\le R_0}|u|^2\,dx\,ds\\\nonumber
		&\le C_\delta M(u_0)+\delta \lambda_1(t)+\delta\lambda_2(t).
	\end{align}
	
	It follows from \eqref{proof_energy_1}--\eqref{proof_energy_5} that for any $\delta>0$ 
	\begin{align*}
		\sup_{R>0}M_\rho(t)&\ge \big(\tfrac12-\delta\big)\lambda_1+c_1(\delta)\lambda_2+4c_0\lambda_3\\
		&\quad -(4c_0+2)\||x|^{\frac32}B_\tau\|_{L^2_rL^\infty(S_r)} \lambda_1^{1/2}\lambda_3^{1/2}-C\{M(u_0)+E(u_0)\}
	\end{align*}
	with $c_1(\delta)>0$ and $C=C(\delta,c_0)>0$. In the above estimate, we use the following basic fact 
    $$\sup_{R>0}(f_R+g_R)\geq\frac12\big(\sup_{R>0}f_R+\sup_{R>0}g_R\big),\,\,f_R,g_R\geq0.$$
    Now we observe that, under the smallness condition \eqref{equ:Btaucond}, one can choose $0<\delta<1/4$ uniformly in $\lambda_1,\lambda_2$ in such a way that
	\begin{align}
		\label{proof_energy_6}
		(\tfrac12-\delta)\lambda_1+4c_0\lambda_3-(4c_0+2)\||x|^{\frac32}B_\tau\|_{L^2_rL^\infty(S_r)} \lambda_1^{1/2}\lambda_3^{1/2}\ge \delta(\lambda_1+\lambda_3).
	\end{align}
	Therefore, 
	\begin{align}
		\label{proof_energy_7}
		\sup_{R>0}M_\rho(t)\ge \delta\lambda_1+(1-2\delta)\lambda_2+\delta\lambda_3-C\{M(u_0)+E(u_0)\}
	\end{align}
	and \eqref{equ:Mlowbound} follows under the smallness \eqref{equ:Btaucond}.

	Finally, we observe that \eqref{equ:Btaucond} was used only for the estimate \eqref{proof_energy_6}. If we assume the control condition \eqref{control_B} instead of \eqref{equ:Btaucond}, then
	$$
	\int_0^t\int_{\R^3}|\rho_ruB_\tau \cdot \overline{\nabla_A u}|dx ds\lesssim \int_0^t\int_{\supp B_\tau}(|\nabla_Au|^2+|u|^2)\,dx\,ds.
	$$
	Hence this term can be dealt with the same way as that for $\epsilon(t)$, yielding 
	$$
	\int_0^t\int_{\R^3}|\rho_ruB_\tau \cdot \overline{\nabla_A u}|dx ds\le C_\delta M(u_0)+\delta\lambda_1(t)+\delta\lambda_2(t). 
	$$
	This, together with the above argument, shows
	$$
	\sup_{R>0}M_\rho(t)\ge \delta\lambda_1+(1-2\delta)\lambda_2+4c_0\lambda_3-C\{M(u_0)+E(u_0)\}
	$$
	and \eqref{equ:Mlowbound} follows. This completes the proof. 
\end{proof}

Thanks to Propositions \ref{proposition_uniform} and \ref{prop:CGT}, we have obtained the global solution $u\in C([0,\infty),H^{1+\epsilon}(\R^3))$ to \eqref{equ:DMNLS} satisfying the global energy bound \eqref{equ:uniformenergy_2}. As an immediate corollary, we derive the following: 

\begin{proposition}[Local energy decay]\label{prop:uniformenergy} Let $u\in C([0,\infty),H^{1+\epsilon}(\R^3))$ be the global solution obtained in Proposition \ref{prop:CGT}. Then
	\begin{align}
		\label{equ:locmasdec}
		&\sup_{R>0}\frac{1}{R^2}\int_0^\infty\int_{|x|=R}|u|^2d\omega\,dt+\sup_{R>0}\frac{1}{R}\int_0^\infty\int_{|x|\le R}|{\nabla_A u}|^2\,dx\,dt\\\nonumber&\quad +\int_0^\infty\int_{\R^3}\frac{|\nabla_A^\tau u|^2}{|x|}\,dx\,dt\lesssim   E(u_0)+M(u_0). 
	\end{align}
	Moreover, for any $\chi\in C_0^\infty(\R^3)$ and $0\le s<1$, 
	\begin{equation}\label{equ:locenedecHs}
		\lim_{t\to\infty}\|\chi u(t)\|_{H^s(\R^3)}=0.
	\end{equation}
\end{proposition}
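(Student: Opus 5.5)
The estimate \eqref{equ:locmasdec} comes directly from the proof of Proposition \ref{proposition_uniform}. Combining \eqref{equ:uppbounMrho} with \eqref{proof_energy_7} (or its analogue under \eqref{control_B}), and using \eqref{equ:lamunibound} to bound $E(u(t))$, I get
\[
\delta\lambda_1(t)+(1-2\delta)\lambda_2(t)+\delta\lambda_3(t)\lesssim E(u_0)+M(u_0)
\]
uniformly in $t$. The global existence of Proposition \ref{prop:CGT} allows $t\to\infty$. Since $\lambda_1,\lambda_2,\lambda_3$ are nondecreasing in $t$, monotone convergence gives \eqref{equ:locmasdec}.

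For \eqref{equ:locenedecHs}, I will first show $\|\chi u(t)\|_{L^2}\to0$ and then interpolate. Fix $R$ with $\supp\chi\subset\{|x|\le R\}$. Taking $R$ in the supremum and integrating the sphere term over radii in $[0,R]$, \eqref{equ:locmasdec} gives
\[
\int_0^\infty\|u(t)\|_{H^1(|x|\le R)}^2\,dt<\infty,
\]
because $|\nabla u|\le|\nabla_Au|+\|A\|_{L^\infty}|u|$. Put $f(t):=\|\chi u(t)\|_{L^2}^2$. Then $f\in L^1(0,\infty)$.

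I then compute $f'(t)$ from \eqref{equ:DMNLS}. Integrating by parts, the principal term becomes $2\Im\int(\nabla\chi^2)\cdot G\nabla_A u\,\bar u$, which is bounded by $\|u\|_{H^1}^2\lesssim E(u_0)+M(u_0)$. The damping term is bounded by $M(u_0)$, and the nonlinear term $|u|^2u$ is real-phase and so drops out. Hence $f'$ is bounded, $f$ is Lipschitz and integrable on $[0,\infty)$, and therefore $f(t)\to0$. This integrable-plus-uniformly-continuous argument is the step needing the most care. One must check that the $G$-terms produce no loss: after integration by parts they involve only first derivatives, so they are fine, and the computation is legitimate for $H^{1+\epsilon}$ solutions by approximation as in Lemma \ref{lem:maenerg}.

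Finally, for $0<s<1$, I interpolate:
\[
\|\chi u(t)\|_{H^s}\lesssim\|\chi u(t)\|_{L^2}^{1-s}\|\chi u(t)\|_{H^1}^{s}.
\]
Here $\|\chi u(t)\|_{H^1}\lesssim\|u(t)\|_{H^1}$ is uniformly bounded by Proposition \ref{prop:CGT}, via $E(u)\gtrsim\|\nabla_Au\|_{L^2}^2$ together with Lemma \ref{lemma_2_2} and mass control. Since the $L^2$ factor tends to zero, \eqref{equ:locenedecHs} follows. The endpoint $s=1$ is excluded because only a uniform $H^1$ bound, not decay, is available.
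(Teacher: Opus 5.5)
Your proposal is correct and matches the paper's proof: you let $t\to\infty$ in the virial bounds to get \eqref{equ:locmasdec}, then show $f(t)=\|\chi u(t)\|_{L^2}^2\in L^1(0,\infty)$ and use the localized mass identity to get $f(t)\to0$, then interpolate with the uniform $H^1$ bound. The only minor variation is in how you conclude $f(t)\to0$. You bound $f'$ uniformly via the mass--energy bounds and use that a nonnegative, integrable, Lipschitz function tends to zero, whereas the paper bounds $|f'|$ by the local spacetime integrand, so that $f'\in L^1(0,\infty)$ and $f$ has a limit, which must be zero; both routes are valid.
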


\begin{proof}
	% \eqref{equ:locmasdec} follows by letting $t\to \infty$ in \eqref{proof_energy_7}. %By \eqref{equ:locmasdec}, there exists $t_n$ with $t_n\nearrow \infty$ as $n\to \infty$ such that $\chi u(t_n)\to 0$ in $L^2$ as $\to n\to\infty$. This sequential convergence, together with $M(t)\le M(t_n)$ for any $t\ge t_n$ by \eqref{equ:mass}, shows 
%     The proof of \eqref{equ:locenedecHs} is the same as that of \cite[Corollary 3.6]{LS25}. 
% \end{proof}
% \begin{proof}
Estimate \eqref{equ:locmasdec} follows by letting $t\to\infty$ in \eqref{proof_energy_7}.
For \eqref{equ:locenedecHs},
let $\chi\in C_c^\infty(\mathbb R^3)$ be real-valued and choose
$R_\chi>0$ such that
\[
\operatorname{supp}\chi\subset B(0,R_\chi).
\]
By the coarea formula and the first term in \eqref{equ:locmasdec},
\[
\begin{aligned}
\int_0^\infty\int_{B(0,R_\chi)}|u|^2\,dx\,dt
&=
\int_0^{R_\chi}
\int_0^\infty\int_{|x|=r}|u|^2\,d\omega\,dt\,dr
\\
&\le
\frac{R_\chi^3}{3}
\sup_{R>0}\frac1{R^2}
\int_0^\infty\int_{|x|=R}|u|^2\,d\omega\,dt
<\infty.
\end{aligned}
\]
The second term in \eqref{equ:locmasdec}, evaluated at $R=R_\chi$, also gives
\[
\int_0^\infty\int_{B(0,R_\chi)}
|\nabla_Au|^2\,dx\,dt<\infty.
\]
Hence
\[
\chi u\in L^2\bigl((0,\infty);H^1(\mathbb R^3)\bigr),
\]
where we have used the boundedness of $A$ and Lemma \ref{lemma_2_2}

Set
\[
f(t):=\|\chi u(t)\|_{L^2}^2.
\]
Using equation \eqref{equ:DMNLS}, integration by parts gives the localized
mass identity
\[
\frac12 f'(t)
=
-\int_{\mathbb R^3}a\chi^2|u|^2\,dx
+
\operatorname{Im}\int_{\mathbb R^3}
\overline u\,G\nabla_Au\cdot\nabla(\chi^2)\,dx.
\]
Consequently, by the boundedness of $G$ and $a$,
\[
|f'(t)|
\lesssim_\chi
\int_{B(0,R_\chi)}
\bigl(|u(t,x)|^2+|\nabla_Au(t,x)|^2\bigr)\,dx.
\]
The preceding local spacetime estimates therefore imply
\[
f,f'\in L^1(0,\infty).
\]
Thus $f(t)$ has a finite limit as $t\to\infty$. Since
$f\ge0$ and $f\in L^1(0,\infty)$, this limit must be zero:
\[
\|\chi u(t)\|_{L^2}\longrightarrow0.
\]

Finally, the global bound \eqref{equ:uniformenergy_2}, the ellipticity condition \eqref{equ:Gpd},
and Lemma \ref{lemma_2_2} imply
\[
\sup_{t\ge0}\|\chi u(t)\|_{H^1}<\infty.
\]
Therefore, for every $0\le s<1$, interpolation yields
\[
\|\chi u(t)\|_{H^s}
\lesssim
\|\chi u(t)\|_{L^2}^{1-s}
\|\chi u(t)\|_{H^1}^{s}
\longrightarrow0
\qquad (t\to\infty).
\]
This proves \eqref{equ:locenedecHs}.
\end{proof}

\section{Interaction Morawetz estimate and proof of scattering}\label{sec:intMor}

In this section, we establish the interaction Morawetz estimate under the full-curvature hypotheses of Assumption~\ref{assum:scattering}, and then prove Theorem~\ref{T:main}.

\subsection{Interaction Morawetz estimates}

\begin{proposition}[Interaction Morawetz estimate]\label{prop:intMorest}
	Assume the hypotheses of Theorem~\ref{T:gwp} and Assumption~\ref{assum:scattering}. Let $u\in C([0,\infty);H^{1+\epsilon}(\R^3))$ be the corresponding global solution. Then
	\begin{equation}\label{equ:intMora}
		\|u\|_{L^4([0,\infty)\times\R^3)}^4
		\le C\big(M(u_0),E(u_0),A,\|\mathcal{F}\|_{L^\infty}\big),
	\end{equation}
where $\mathcal{F}=dA$ denote the curvature of the magnetic field.
\end{proposition}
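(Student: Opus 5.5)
The estimate will come from the Colliander--Keel--Staffilani--Takaoka--Tao interaction Morawetz argument, with weight $|x-y|$, applied to \eqref{equ:DMNLS} written as the magnetic equation \eqref{equ:nls} with forcing $F=-iau+(\Delta_A-\Delta_G)u$. Define the interaction action
\[
\mathcal M(t)=2\Im\iint_{\R^3\times\R^3}|u(t,y)|^2\,\frac{x-y}{|x-y|}\cdot\nabla_A u(t,x)\,\overline{u(t,x)}\,dx\,dy .
\]
It is gauge invariant, and by Proposition~\ref{prop:CGT} it is bounded uniformly in time:
\[
|\mathcal M(t)|\lesssim M(u_0)^{3/2}\bigl(E(u_0)+M(u_0)\bigr)^{1/2}.
\]
The time derivative follows from the one-particle virial identity of Lemma~\ref{lem:moriden} with $\rho(x)=|x-y|$, integrated against $|u(y)|^2$, plus the contribution of the local mass law $\partial_t|u|^2=-2\nabla\cdot\Im(\bar u\nabla_Au)-2a|u|^2$ (up to $G$-corrections) in the $y$ variable. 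For the full weight $|x-y|$, not only its radial part, the magnetic term is no longer of the form $B_\tau$ but involves the whole curvature:
\[
4\Im\iint|u(y)|^2\,\mathcal F_{jk}(x)\,\frac{(x-y)_j}{|x-y|}\,u(x)\overline{D_ku(x)}\,dx\,dy .
\]
This is why Assumption~\ref{assum:scattering} is needed.

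The positive terms are handled as in the flat case. The bi-Laplacian term $-\Delta^2|x-y|=8\pi\delta_{x=y}$ gives $c\int|u|^4$ after integration in $y$. The Hessian terms, combined with the momentum cross terms, are nonnegative by the standard Cauchy--Schwarz argument, which uses only the covariant derivatives $\nabla_A$ and is unchanged by the magnetic field. The nonlinear term has the favourable sign.

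The error terms, integrated over $[0,T]$, are bounded as follows.
\begin{itemize}
\item[(a)] The damping contributions (from both the $x$- and $y$-equations) are at most $\iint a(x)|u(x)||\nabla_Au(x)||u(y)|^2$ and $\iint a(y)|u(y)|^2|u(x)||\nabla_Au(x)|$. These are bounded by $\|u\|_{L^\infty L^2}\|u\|_{L^\infty H^1}\int_0^T\!\!\int a|u|^2$ (after Cauchy--Schwarz in space-time using \eqref{equ:locmasdec}), and hence by $C(M,E)$ via \eqref{equ:axmasscon}.
\item[(b)] The metric term $\epsilon$ of \eqref{equ:errtermMore}, now with $\nabla\rho$, $\nabla^2\rho$, $\nabla\Delta\rho$ built from $|x-y|$, is supported in $x\in\supp(G-I)$. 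The singular factor $\nabla\Delta\rho\sim|x-y|^{-2}$ is handled by Hardy's inequality in $y$. The result is bounded by $C\,M(u_0)\int_0^T\!\!\int_{\supp(G-I)}(|\nabla_Au|^2+|u|^2)$, which is $C(M,E)$ by \eqref{equ:locmasdec} (or the argument in the proof of Proposition~\ref{proposition_uniform} using \eqref{equ:assumpona} and \eqref{equ:nablabound}).
\item[(c)] The curvature term is the new one. Since $\supp\mathcal F\Subset\{a>0\}$ is compact, it is bounded by
\[
\|\mathcal F\|_{L^\infty}\,M(u_0)\int_0^T\!\!\int_{\supp\mathcal F}|u||\nabla_Au|\,dx\,dt\le C(M,E,\|\mathcal F\|_{L^\infty}),
\]
again using \eqref{equ:locmasdec} with $R$ large enough that $\supp\mathcal F\subset B(0,R)$.
\end{itemize}
Putting everything together gives
\[
\int_0^T\!\!\int|u|^4\le 2\sup_t|\mathcal M(t)|+C,
\]
uniformly in $T$. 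Letting $T\to\infty$ yields \eqref{equ:intMora}.

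The main obstacle is step (c) together with the $G$-corrections in the $y$-continuity equation: one must make sure that every non-signed term is localized in $x$ or $y$ to a compact set where \eqref{equ:locmasdec} applies. Condition \eqref{condition1} is needed only to justify that the action is well defined and gauge-controlled. If the $|x-y|^{-1}$ and $|x-y|^{-2}$ kernels cause trouble in (b), I would first try Hardy's inequality in $y$, and otherwise replace $|x-y|$ by the smoothed weight $\langle x-y\rangle$. The cost of smoothing is that the $\delta$-term becomes a positive approximate identity, which still controls $\int|u|^4$ locally by the usual averaging argument.
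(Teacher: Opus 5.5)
Your proposal is correct and has the same architecture as the paper's proof. The shared steps are: the interaction action with weight $|x-y|$; positivity of the bi-Laplacian, Hessian/momentum and nonlinear terms taken from the Colliander--Czubak--Lee computation; and every non-signed error localized to $\supp a$, $\supp(G-I)$ or $\supp\mathcal F$, then absorbed by \eqref{equ:axmasscon} and the local energy decay \eqref{equ:locmasdec}.

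The one real difference is the curvature term. You use directly that $\supp\mathcal F$ is compact, so $\int_0^\infty\!\int_{\supp\mathcal F}|u||\nabla_Au|$ is bounded by \eqref{equ:locmasdec} at one fixed radius (with the coarea formula for the $|u|^2$ part). The paper instead follows Colliander--Czubak--Lee: it splits $|\mathcal F||u||\nabla_Au|\le\frac12|\mathcal F|^{2-2\beta}|\nabla_Au|^2+\frac12|\mathcal F|^{2\beta}|u|^2$ dyadically and bounds the result by $\mathcal K_1(\mathcal F)\lambda_2+\mathcal K_2(\mathcal F)\lambda_1$. Your route is shorter and is all that Assumption~\ref{assum:scattering} needs. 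The paper's route uses only the suprema over $R$ in \eqref{equ:locmasdec}, so it would also cover non-compactly supported curvature with $\mathcal K_1,\mathcal K_2<\infty$.

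There are three small points to fix.

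First, you flag the $G$-correction in the $y$-continuity equation (the paper's $\mathrm I_2$) as an obstacle but never estimate it. It is routine. By the symmetry of $G$, $\Im\big(\bar u(\Delta_A-\Delta_G)u\big)=-\nabla\cdot\Im\big(\bar u(G-I)\nabla_Au\big)$. Integrating by parts in $y$ gives the kernel $\nabla^2\rho(x-y)=O(|x-y|^{-1})$. Hardy's inequality in $x$, with the diamagnetic inequality, then gives the bound $\|u\|_{H^1}^2\int_{\supp(G-I)}(|\nabla_Au|^2+|u|^2)\,dy$, which is time-integrable by \eqref{equ:locmasdec}.

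Second, in (b) Hardy's inequality produces a factor $\|u\|_{L^\infty H^1}^2$, not $M(u_0)$. This is harmless, since the final constant is still $C(M(u_0),E(u_0))$.

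Third, \eqref{condition1} plays no role in this proposition. The action is well defined simply because $A\in L^\infty$. The paper uses \eqref{condition1} only later, for the decay of $V_1,V_2$ in the scattering argument.
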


\begin{proof}	We first recall the magnetic interaction Morawetz identity established in \cite{CCL}.  Given a radially symmetric function $\rho$, we define the Morawetz action centered at $y$ and the associated interaction Morawetz action by
	\begin{align}\label{equ:moractre1}
		M_\rho^y(t)&:=2\Im \int_{\R^3}\nabla \rho(x-y)\cdot \nabla_A
		u(t,x)\bar{u}(t,x)\,dx,\\
		\label{equ:moractre12}
		M(t)&:=\int_{\R^3}|u(t,y)|^2 M_\rho^y(t)\,dy\\\nonumber
		&=2{\rm
			Im}\iint_{\R^3\times\R^3}|u(t,y)|^2\nabla \rho(x-y)\cdot \nabla_A
		u(t,x)\bar{u}(t,x)\,dx\,dy.
	\end{align}
	Set
	\[
	\mathcal R:=-iau+(\Delta_A-\Delta_G)u,
	\qquad
	p_k^A(t,x):=\Im\big(\overline{u(t,x)}D_ku(t,x)\big).
	\]
	Applying the one-particle virial identity with the translated multiplier $x\mapsto\rho(x-y)$ and differentiating the $y$-density gives
	\begin{align}
		\frac{d}{dt}M(t)
		&=-4\iint(\nabla^2\rho)(x-y)\Im(\bar u\nabla_Au)(x)\cdot\Im(\bar u\nabla_Au)(y)\,dx\,dy\label{equ:main1}\\
		&\quad+4\iint\Im(\mathcal R\bar u)(y)\nabla\rho(x-y)\cdot\Im(\bar u\nabla_Au)(x)\,dx\,dy\label{equ:main2}\\
		&\quad+\iint(-\Delta^2\rho)(x-y)|u(x)|^2|u(y)|^2\,dx\,dy\label{equ:main3}\\
		&\quad+4\iint|u(y)|^2(\nabla^2\rho)(x-y)\nabla_Au(x)\cdot\overline{\nabla_Au(x)}\,dx\,dy\label{equ:main4}\\
		&\quad+\iint|u(y)|^2(\Delta\rho)(x-y)|u(x)|^4\,dx\,dy\label{equ:main5}\\
		&\quad+2\iint|u(y)|^2\rho_j(x-y)\{\mathcal R,u\}_P^j(x)\,dx\,dy\label{equ:errtermMor}\\
		&\quad-4\sum_{j,k=1}^3\iint|u(y)|^2\rho_j(x-y)\Fcurv_{kj}(x)p_k^A(x)\,dx\,dy.\label{equ:main5-new}
	\end{align}
	The last line is the full magnetic-curvature contribution. The minus sign corresponds to the convention $D_j=\partial_j-iA_j$; its sign will be irrelevant because the term is estimated in absolute value.

	We now choose $\rho(z)=|z|$. Hence
	\[
	\nabla_x\rho(x-y)=\frac{x-y}{|x-y|},\qquad
	\Delta_x\rho(x-y)=\frac2{|x-y|},
	\]
	\[
	\nabla_x^2\rho(x-y)
	=\left(\frac{\delta_{jk}}{|x-y|}
	-\frac{(x_j-y_j)(x_k-y_k)}{|x-y|^3}\right)_{j,k},
	\qquad
	\Delta_x^2\rho(x-y)=-8\pi\delta_{x=y}.
	\]
	It follows from \cite[Section~4]{CCL} that
	\begin{equation}\label{equ:lowbtvz}
		\eqref{equ:main1}+\eqref{equ:main3}+\eqref{equ:main4}+\eqref{equ:main5}
		\ge3\|u(t)\|_{L_x^4}^4.
	\end{equation}
	Consequently,
	\begin{align}\label{equ:Maimt}
		\frac{d}{dt}M(t)
		\ge3\|u(t)\|_{L_x^4}^4+\mathrm I(t)+\mathrm{II}(t)+\mathrm{III}_{\Fcurv}(t)(t),
	\end{align}
	where
	\begin{align*}
	\mathrm I(t)
	&:=4\iint\Im(\mathcal R\bar u)(y)\nabla\rho(x-y)\cdot\Im(\bar u\nabla_Au)(x)\,dx\,dy,\\
	\mathrm{II}(t)
	&:=2\iint|u(y)|^2\rho_j(x-y)\{\mathcal R,u\}_P^j(x)\,dx\,dy,\\
	\mathrm{III}_{\Fcurv}(t)
	&:=-4\sum_{j,k=1}^3\iint|u(y)|^2
	\frac{x_j-y_j}{|x-y|}\Fcurv_{kj}(x)p_k^A(x)\,dx\,dy.
	\end{align*}
	{\bf Estimate of the term $\textrm{I}$:}  We write
	\begin{align*}
		\textrm{I}&=-4\iint a(y)|u(y)|^2\nabla \rho(x-y)\cdot \Im \big(\bar{u}\nabla_A u \big)(x)\,dx\,dy\\
		&\quad +4\iint \Im \big[ (\Delta_A-\Delta_G)u\bar{u}\big](y)\nabla \rho(x-y)\cdot \Im \big(\bar{u}\nabla_Au\big)(x)\,dx\,dy\\
		&=: \textrm{I}_1+\textrm{I}_2.
	\end{align*}
	By $|\nabla\rho|\le1$, Cauchy--Schwarz, and \eqref{equ:uniformenergy_2}, uniformly in $y$ we have
	\[
	|M_\rho^y(t)|\le2\|u(t)\|_{L^2}\|\nabla_Au(t)\|_{L^2}
	\le C\big(M(u_0),E(u_0)\big).
	\]
	Consequently, $|M(t)|\le M(u_0)\sup_y|M_\rho^y(t)|\le C(M(u_0),E(u_0))$.
	This bound, together with \eqref{equ:axmasscon}, implies
	\begin{align*}%\label{equ:I10t}
		\int_0^{\infty} |\textrm{I}_1(t)|\,dt
		\le 4\int_0^\infty \int_{\R^3} a(y)|u(y)|^2\big|M_\rho^y(t)\big|\,dy
		\le  C\big(M(u_0),E(u_0)\big). %\int_0^{+\infty}\int  a(y)|u(y)|^2\,dy\,dt\le  C\big(M(u_0),E(u_0)\big).
	\end{align*}
	Since Hardy's inequality yields
	$$
	\||x-y|^{-1}u\|_{L^2_x}\lesssim \||x|^{-1}u(\cdot -y)\|_{L^2_x}\lesssim \|u\|_{H^1}
	$$
	uniformly in $y\in \R^3$, by integrating by parts in the $y$-variable and applying H\"older's inequality, we obtain
	\begin{align*}
		|\textrm{I}_2|&=4\Big| \iint \Im \big[\bar{u}(G-I)\nabla_A u\big](y)\nabla^2\rho(x-y)\cdot \Im \big(\bar{u}\nabla_Au\big)(x)\,dx\,dy\Big|\\
		&\le C\iint_{y\in\mathop{\mathrm{supp}} (G-I)}\big|(\nabla^2)\rho(x-y)\big|\cdot|u(x)|\cdot|\nabla_A u(x)|\cdot|u(y)|\cdot|\nabla_A u(y)|\,dx\,dy\\
		&\lesssim \int_{y\in\mathop{\mathrm{supp}} (G-I)}\big\||x-y|^{-1}u\big\|_{L^2_x}
		\|\nabla_A u\|_{L^2}|u(y)|\cdot|\nabla u(y)|\,dy\\
		&\lesssim \|u\|_{H^1}^2\int_{y\in\mathop{\mathrm{supp}} (G-I)}\big(|\nabla_A u(y)|^2+|u(y)|^2\big)\,dy.
	\end{align*}
	This together with Proposition \ref{prop:uniformenergy} implies
	\begin{align*}
		\int_0^{\infty} |\textrm{I}_2(t)|\,dt&\lesssim  \int_0^{+\infty} \int_{y\in\mathop{\mathrm{supp}} (G-I)}\big(|\nabla_A u(y)|^2+|u(y)|^2\big)\,dy\,dt\\
		&\le C\big(M(u_0),E(u_0)\big).
	\end{align*}
	Therefore
	\begin{align*}%\label{equ:Itest1}
		\int_0^{\infty} |\textrm{I}(t)|\,dt \le C\big(M(u_0),E(u_0)\big).
	\end{align*}
	{\bf Estimate of the term $\textrm{II}$:}
	We write
	\begin{align*}
		\textrm{II}&=-2\iint |u(y)|^2\rho_j(x-y)\big\{iau,u\big\}_P^j(x)\,dx\,dy\\
		&+2\iint  |u(y)|^2\rho_j(x-y)\big\{(\Delta_A-\Delta_G)u,u\big\}_P^j(x)\,dx\,dy\\
		&=: \textrm{II}_1+\textrm{II}_2.
	\end{align*}
	Noting that $\big\{iau,u\big\}_P=-2a\Im (u\overline{\nabla_Au})$, we obtain
	\begin{align*}
		\big|\textrm{II}_1\big|
		&=4\Big|\iint |u(y)|^2\rho_j(x-y)a(x)\Im (u\overline{\nabla_Au})(x)\,dx\,dy\Big|\\
		&\lesssim M(u_0)\int_{\mathop{\mathrm{supp}}  a}\big(|\nabla_A u|^2+|u|^2\big)\,dx.
	\end{align*}
	%
	%Noting that
	%$$\big\{(\Delta-\Delta_G)v,v\big\}_P=-\nabla \mathop{\mathrm{Re}} \big\{(\Delta-\Delta_G)v\bar{v}\big\}+2\mathop{\mathrm{Re}} \big\{ (\Delta-\Delta_G)v\nabla\bar{v}\big\},$$
	Using \eqref{equ:errtermMore}, we derive
	\begin{align*}
		\textrm{II}_2
		&=-2\mathop{\mathrm{Re}} \iint |u(y)|^2(\nabla\Delta \rho)(x-y)\cdot \left((G-I)\bar{u}\nabla_A u\right)(x)\,dx\,dy\\
		&-4\mathop{\mathrm{Re}} \iint |u(y)|^2(\nabla^2\rho)(x-y) ((G-I)\nabla_A u)(x)\cdot \overline{\nabla_Au}(x)\,dx\,dy\\
		&+2\sum_{k=1}^3\iint|u(y)|^2 \rho_k(x-y)(\pa_kG)(x)\nabla_Au(x)\cdot \overline{\nabla_Au}(x)\,dx\,dy.
	\end{align*}
	This, together with the same argument as above using Hardy's inequality, implies
	\begin{align*}
		|\textrm{II}_2|
		&\lesssim \iint_{x\in \supp (G-I)} |x-y|^{-2}|u(y)|^2|u(x)||\nabla_Au(x)|\,dx\,dy\\
		&\quad+\iint_{x\in \supp (G-I)}|x-y|^{-1}|u(y)|^2|\nabla_Au(x)|^2\,dx\,dy\\
		&\quad +\iint_{x\in \supp(G-I)}|u(y)|^2|\nabla_Au(x)|^2\,dx\,dy\\
		&\lesssim \|u\|_{H^1}^2\int_{\mathop{\mathrm{supp}} (G-I)}\big(|\nabla_A u|^2+|u|^2\big)\,dx.
	\end{align*}
	Hence Proposition \ref{prop:uniformenergy} yields
	\begin{align*}
		\int_0^{\infty}|\textrm{II}|\,dt&\lesssim C\big(M(u_0),E(u_0)\big).
	\end{align*}
	\textbf{Estimate of the full-curvature term $\mathrm{III}_{\Fcurv}$.}
	Since
	\[
	|p^A(t,x)|\le |u(t,x)|\,|\nabla_Au(t,x)|
	\]
	and $|(x-y)/|x-y||=1$, the mass bound gives
	\begin{equation}\label{equ:III-basic}
	|\mathrm{III}_{\Fcurv}(t)|
	\lesssim M(u_0)\int_{\R^3}|dA(x)|\,|u(t,x)|\,|\nabla_Au(t,x)|\,dx.
	\end{equation}
By Remark \ref{magnetic-condition}, the magnetic curvature $\Fcurv$ satisfies the following bounds
\begin{align*}
				\mathcal K_1(\Fcurv)&:=\sum_{j\in\Z}2^{j+1}\|\Fcurv\|_{L^\infty(C_j)}^{2-2\beta}<\infty,\\
				\mathcal K_2(\Fcurv)&:=\int_0^\infty r^2\|\Fcurv\|_{L^\infty(S_r)}^{2\beta}\,dr<\infty;
			\end{align*}
Arguing like the proof in \cite{CCL},  using
	\[
	|dA|\,|u|\,|\nabla_Au|
	\le\frac12|dA|^{2-2\beta}|\nabla_Au|^2
	+\frac12|dA|^{2\beta}|u|^2,
	\]
	the dyadic decomposition and the definitions of $\lambda_1(T)$ and $\lambda_2(T)$ yield
	\begin{align}
	\int_0^T\!|\mathrm{III}_{\Fcurv}(t)|\,dt
	&\lesssim M(u_0)\left[
	\mathcal K_1(\Fcurv)\lambda_2(T)
	+\mathcal K_2(\Fcurv)\lambda_1(T)\right]\label{equ:III-K}
	\end{align}
where $\mathcal{F}=dA$.
Accordingly, we split the full-curvature contribution as
\[
\int_0^T |III_{\Fcurv}(t)|\,dt \lesssim III_{{\Fcurv},a}+III_{{\Fcurv},b},
\]
where
\[
III_{{\Fcurv},a} \lesssim M(u_0)K_1({\Fcurv})\lambda_2(T),
\qquad
III_{{\Fcurv},b} \lesssim M(u_0)K_2({\Fcurv})\lambda_1(T).
\]
	Indeed,
\begin{align*}
\operatorname{III}_{{\Fcurv},a} &= \|u_0\|_{L^2}^2 \int_0^T \sum_{j \in \mathbb{Z}} \int_{C_j} |dA(x)|^{2-2\beta} |\nabla_A u(x)|^2 \, dx dt \\
&\le \|u_0\|_{L^2}^2 \sum_{j \in \mathbb{Z}} \sup_{x \in C_j} 2^{j+1} |dA(x)|^{2-2\beta} \int_0^T \int_{C_j} \frac{|\nabla_A u(x)|^2}{2^{j+1}} dx dt \\
&\le \|u_0\|_{L^2}^2 \sum_{j \in \mathbb{Z}} \sup_{x \in C_j} 2^{j+1} |dA(x)|^{2-2\beta} \lambda_2(T),
\end{align*}
	while the coarea formula gives the estimate of second term
\begin{align*}
    \operatorname{III}_{{\Fcurv},b} &= \|u_0\|_{L^2}^2 \int_0^T \int_0^\infty \int_{|x|=R} R^2 |dA(x)|^{2\beta} \frac{|u(x)|^2}{R^2} d\sigma dR dt \\
&\le \|u_0\|_{L^2}^2 \left( \int_0^\infty \sup_{|x|=R} |x|^2 |dA(x)|^{2\beta} dR \right) \lambda_1(T).
\end{align*}

Combining with Proposition~\ref{prop:uniformenergy}, it holds
	\begin{equation}\label{equ:III-L1}
	\int_0^\infty(|\mathrm{III}_{{\Fcurv},a}(t)|+|\operatorname{III}_{{\Fcurv},b}(t)|)\,dt
	\le C\big(M(u_0),E(u_0),\|\Fcurv\|_{L^\infty}\big)<\infty,
	\end{equation}
    where $C$ depends on the  compactly supported set of $\Fcurv$.

	Plugging the above estimates for $\mathrm I$, $\mathrm{II}$, and $\mathrm{III}_{dA}$ into \eqref{equ:Maimt}, we conclude
	\begin{align*}
		\big\|u\big\|_{L_{t,x}^4(\R^+\times\R^3)}^4\leq \sup_{t>0}\big| M(t)\big|+ C\big(M(u_0),E(u_0)\big)\leq C_1\big(M(u_0),E(u_0),\|\Fcurv\|_{L^\infty}\big).
	\end{align*}
\end{proof}

\subsection{Proof of scattering}
Here we prove Theorem~\ref{T:main}.

First, it is enough to establish scattering in $L^2(\R^3)$. Indeed, if the $L^2$-scattering holds, namely, the limit
$$
u_+=\lim_{t\to \infty}e^{-it\Delta}u(t)
$$
in $L^2(\R^3)$ exists, then Theorem~\ref{T:gwp} implies
\[
\sup_{t>0}\|u(t)\|_{H^1}^2\lesssim M(u_0)+E(u_0).
\]
Then the uniqueness of the weak limit in $H^1(\R^3)$ shows $u_+\in H^1(\R^3)$. By the interpolation, we therefore conclude $\|u(t)-e^{it\Delta}u_+\|_{H^s}\to 0$ as $t\to \infty$ for all $0\le s<1$. 

To prove the $L^2$-scattering, we let $\chi\in C_c^\infty(\R^3)$ satisfy $\chi\equiv1$ on a neighbourhood of $\operatorname{supp}a$. By virtue of \eqref{equ:locmasdec}, it suffices to show there exists $u_+\in L^2(\R^3)$ such that
\begin{equation}\label{equ:1-chismall}
	\lim_{t\to\infty}\|(1-\chi)u(t)-e^{it\Delta}u_+\|_{L^2}=0.
\end{equation}
Set $w=(1-\chi)u$. Since $\operatorname{supp}(G-I)\Subset\{a>0\}$ and $\chi\equiv1$ on a neighbourhood of $\operatorname{supp}a$, we have $G=I$ on $\operatorname{supp}(1-\chi)$. Moreover, $\div A=0$, and hence
\[
\Delta_A=\Delta-2iA\cdot\nabla_A+|A|^2.
\]
Using
\[
\Delta((1-\chi)u)=(1-\chi)\Delta u-2\nabla\chi\cdot\nabla u-(\Delta\chi)u
\]
and $\nabla u=\nabla_Au+iAu$, we obtain
\begin{equation}\label{equ:w-exterior}
 i\partial_tw=-\Delta w+V_1\cdot\nabla_Au+V_2u+(1-\chi)|u|^2u,
\end{equation}
where
\begin{equation}\label{equ:V1V2-correct}
 V_1=-2\nabla\chi+2i(1-\chi)A,
 \qquad
 V_2=-\Delta\chi-2i\nabla\chi\cdot A-(1-\chi)|A|^2-i(1-\chi)a.
\end{equation}
Since $\chi=1$ on $\operatorname{supp}a$, the last term in $V_2$ vanishes identically. Assumption~\ref{assum:scattering}, together with the compact support of $\nabla\chi$ and $\Delta\chi$, implies
\begin{align}
 \sum_{j\ge0}2^{3j}\|V_1\|_{L^\infty(C_j)}^2&<\infty,
 &
 \sum_{j\ge0}2^{5j}\|V_2\|_{L^\infty(C_j)}^2&<\infty,
 \label{proof_scattering_1}
\end{align}
where $C_0=\{|x|\le1\}$ and $C_j=\{2^j\le|x|\le2^{j+1}\}$ for $j\ge1$. For the $|A|^2$ contribution, note that if $a_j=2^{3j}\|A\|_{L^\infty(C_j)}^2$, then
\[
\sum_{j\ge1}2^{5j}\|A\|_{L^\infty(C_j)}^4
=\sum_{j\ge1}2^{-j}a_j^2<\infty.
\]
By the Duhamel principle, we obtain
\begin{align*}
	w(t)&=e^{it\Delta}\Big[(1-\chi)u_0-i\int_0^t e^{-is\Delta}\big(V_1\cdot \nabla_Au+V_2u+(1-\chi)|u|^{2}u\big)(s)\,ds\Big]\\
	&=: e^{it\Delta}u_+(t).
\end{align*}
We show that $u_+(t)$ is Cauchy in $L^2(\R^3)$ as $t\to \infty$. Indeed, we have
\begin{align*}
	\big\|u_+(t_1)-u_+(t_2)\big\|_{L^2}
	&\le \Big\|\int_{t_1}^{t_2} e^{-is\Delta}(V_1\cdot \nabla_A+V_2)u\,ds\Big\|_{L^2}%+\Big\|\int_{t_1}^{t_2} e^{-is\Delta}V_2u\,ds\Big\|_{L^2}\\
	+\Big\|\int_{t_1}^{t_2} e^{-is\Delta}|u|^2u\,ds\Big\|_{L^2}\\
	%		&\quad +\Big\|\int_{t_1}^{t_2} e^{-is\Delta_A}[\chi,\Delta_A]u(s)\,ds\Big\|_{H^\frac12}\\
	&=: I_1+I_2.
\end{align*}
To estimate $I_1$, we use the weighted $L^2_{t,x}$ estimate of the form
\begin{align}
	\label{proof_scattering_2}
	\Big\|\int_0^\infty e^{-is\Delta}f(s)\,ds\Big\|_{L^2_x}\lesssim \||x|f\|_{L^2_tL^2_x},
	%\label{proof_scattering_2}\Big\||\nabla|^{\frac12}\int_0^te^{-is\Delta}F(s)\,ds\Big\|_{L^2_x}&\lesssim \sum_{j\in \Z}^\infty2^{j/2}\|F_j\|_{L^2_tL^2_x},
\end{align}
which is the dual of the standard weighted $L^2_{t,x}$-estimate (see \cite[Theorem 1]{KatoYajima}):
$$
\||x|^{-1}e^{it\Delta}u_0\|_{L^2_tL^2_x}\lesssim \|u_0\|_{L^2_x}. 
$$
By \eqref{proof_scattering_1} and \eqref{proof_scattering_2},
\begin{align}
 I_1^2
 &\lesssim
 \int_{t_1}^{t_2}\!\int_{\R^3}|x|^2
 \big(|V_1|^2|\nabla_Au|^2+|V_2|^2|u|^2\big)\,dx\,dt.
 \label{proof_scattering_3}
\end{align}
The integrand on the right-hand side is integrable on $[0,\infty)\times\R^3$. Indeed, Proposition~\ref{prop:uniformenergy} gives
\[
\int_0^\infty\!\int_{C_j}|\nabla_Au|^2\,dx\,dt
\lesssim2^j\lambda_2(\infty)
\]
and, by the coarea formula,
\[
\int_0^\infty\!\int_{C_j}|u|^2\,dx\,dt
\lesssim2^{3j}\lambda_1(\infty).
\]
Consequently,
\begin{align*}
&\int_0^\infty\!\int |x|^2|V_1|^2|\nabla_Au|^2
\lesssim\lambda_2(\infty)\sum_{j\ge0}2^{3j}\|V_1\|_{L^\infty(C_j)}^2<\infty,\\
&\int_0^\infty\!\int |x|^2|V_2|^2|u|^2
\lesssim\lambda_1(\infty)\sum_{j\ge0}2^{5j}\|V_2\|_{L^\infty(C_j)}^2<\infty.
\end{align*}
The absolute continuity of the integral in \eqref{proof_scattering_3} therefore yields
\[
I_1\longrightarrow0
\qquad\text{as }t_1,t_2\to\infty.
\]

To estimate $I_2$, we use the endpoint Strichartz estimate (see \cite{KeelTao})
\begin{align*}
	%\label{proof_scattering_3}
	\Big\|\int_0^\infty e^{-is\Delta}f(s)\,ds\Big\|_{L^2_x}\lesssim \|f\|_{L^2_tL^{\frac65}_x},
\end{align*}
the Sobolev embedding $W^{1,1}(\R^3) \subset L^{\frac65}(\R^3)$, H\"older's inequality and \eqref{equ:uniformenergy_2} to obtain
\begin{align*}
	I_2&\lesssim \||u|^{2}u\|_{L^2([t_1,t_2],L^{\frac65})}\\
	&\lesssim \||u|^{2}u\|_{L^2([t_1,t_2],W^{1,1})}\\
	&\lesssim \|u\|_{L^\infty_tH^1}\|u\|_{L^4([t_1,t_2],L^4)}^2\\
	&\lesssim \big(M(u_0)+E(u_0)\big)\|u\|_{L^4([t_1,t_2],L^4)}^2
\end{align*}
where, by virtue of Proposition \ref{prop:intMorest}, $\|u\|_{L^4([t_1,t_2],L^4)}^2\to 0$ as $t_1,t_2\to \infty$. 
Hence, we obtain \eqref{equ:1-chismall} for some $u_+\in L^2(\R^3)$, which completes the proof.

\appendix
\section{Proof of Lemmas \ref{lemma_CGT_1} and \ref{lemma_CGT_2}}\label{appendix_semiclassical}

This appendix is devoted to the proofs of Lemmas \ref{lemma_CGT_1} and \ref{lemma_CGT_2}. Although the strategy of the proofs follows the same line as that of \cite{BGT} (see also \cite{Bouclet}), we give the details of proofs as the magnetic potential $A(x)$ was not treated in those works. 

We first state basic $L^p\to L^q$ estimates for $h$-PDOs. 

%{lemma}
\begin{lemma}
	\label{lemma_appendix_1}
	Let $m>3$ and $1\le p\le q\le \infty$. Then there exists $C=C(p,q,m)>0$ such that, for any $b\in S^{-m}(\R^3)$
	$$
	\|b(x,hD)\|_{L^p(\R^3)\to L^q(\R^3)}\le C h^{-3(\frac1p-\frac1q)}\sup_{|\beta|\le 4}\|\<\xi\>^m\partial_\xi^\beta b\|_{L^\infty(\R^3\times\R^3)}. 
	$$
\end{lemma}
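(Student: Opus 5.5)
The estimate follows from the kernel representation of $b(x,hD)$ together with Young's inequality, after rescaling to remove $h$. We write
\[
b(x,hD)f(x)=\int_{\R^3}K_h(x,y)f(y)\,dy,\qquad K_h(x,y)=h^{-3}k\Big(x,\frac{x-y}{h}\Big),\qquad k(x,z):=\frac{1}{(2\pi)^3}\int_{\R^3}e^{iz\cdot\xi}b(x,\xi)\,d\xi .
\]
Since $b\in S^{-m}$ with $m>3$, we have $b(x,\cdot)\in L^1$ uniformly in $x$, so $k$ is well defined and bounded. The goal is the pointwise majorant
\[
|k(x,z)|\lesssim N(b)\,\langle z\rangle^{-4},\qquad N(b):=\sup_{|\beta|\le 4}\|\langle\xi\rangle^m\partial_\xi^\beta b\|_{L^\infty(\R^3\times\R^3)},
\]
uniformly in $x$. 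For $|z|\le1$ we use $|k|\le\int|b|\,d\xi\lesssim N(b)\int\langle\xi\rangle^{-m}d\xi$, which is finite because $m>3$. For $|z|\ge1$ we write $z^\beta e^{iz\cdot\xi}=(-i\partial_\xi)^\beta e^{iz\cdot\xi}$ with $|\beta|=4$ and integrate by parts. This gives $|z^\beta k(x,z)|\le\int|\partial_\xi^\beta b|\,d\xi\lesssim N(b)$, using the same integrability of $\langle\xi\rangle^{-m}$. Summing over the components of $\beta$ yields $|z|^4|k|\lesssim N(b)$. The boundary terms vanish because $b$ and its derivatives decay in $\xi$. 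This step is the only place where the hypotheses $m>3$ and $|\beta|\le4$ enter. Since $4>3$, the resulting majorant is integrable, and it actually lies in every $L^r(\R^3)$ with $1\le r\le\infty$.

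Next we set $g(z):=N(b)\langle z\rangle^{-4}$, so that $|K_h(x,y)|\lesssim h^{-3}g\big((x-y)/h\big)=:g_h(x-y)$. This gives the pointwise domination $|b(x,hD)f|\lesssim g_h*|f|$. The $x$-dependence of the symbol causes no trouble, because the bound on $k$ is uniform in $x$.

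Finally we choose $r$ with $1+\frac1q=\frac1r+\frac1p$. Then $r\in[1,\infty]$, because $p\le q$. Young's inequality gives
\[
\|b(x,hD)f\|_{L^q}\lesssim\|g_h\|_{L^r}\|f\|_{L^p},\qquad \|g_h\|_{L^r}=h^{-3}h^{3/r}\|g\|_{L^r}=h^{-3(1-\frac1r)}\|g\|_{L^r}=h^{-3(\frac1p-\frac1q)}\|g\|_{L^r}.
\]
Since $\|g\|_{L^r}\lesssim N(b)$ with a constant depending only on $r$, the claimed bound follows, with $C$ depending only on $(p,q,m)$. No step is a serious obstacle. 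The one point to check is the integration-by-parts kernel bound: it needs enough derivatives ($4>3$) so that $g$ is integrable, and this is exactly what the supremum over $|\beta|\le4$ provides. The endpoint cases $p=1$ or $q=\infty$ are covered directly by Young's inequality.
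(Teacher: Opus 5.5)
Your proof is correct. It starts from the same kernel representation as the paper but extracts the $L^p\to L^q$ bound differently.

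\textbf{The paper's route.} It proves two endpoint bounds and then interpolates:
\begin{itemize}
\item the $L^1\to L^\infty$ bound $h^{-3}\|\langle\xi\rangle^m b\|_{L^\infty}$, from the sup of the kernel;
\item an $h$-uniform $L^p\to L^p$ bound for every $p$, via Schur's test on $\sup_y\int|K|\,dx$ and $\sup_x\int|K|\,dy$.
\end{itemize}
The case $p\le q$ then follows by Riesz--Thorin interpolation.

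\textbf{Your route.} You dominate $|K_h(x,y)|$ pointwise by the translation-invariant majorant $g_h(x-y)$ and apply Young's inequality once. This gives every pair $p\le q$ with the factor $h^{-3(1/p-1/q)}$ directly, with no interpolation.

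\textbf{Comparison.} Both routes rest on the same input. The paper's Schur integrals $\int|\hat b(x,z)|\,dz$ are bounded uniformly in $x$ only because of the decay $|\hat b(x,z)|\lesssim\langle z\rangle^{-4}$, which comes from four integrations by parts in $\xi$. That decay is exactly your majorant.
\begin{itemize}
\item Your version makes this step explicit. It also shows the controlling quantity is the weighted $\sup_{|\beta|\le4}\|\langle\xi\rangle^m\partial_\xi^\beta b\|_{L^\infty}$. The paper's displayed Schur bound uses the unweighted norms $\|\partial_\xi^\beta b\|_{L^\infty}$, which do not by themselves control $\int|\partial_\xi^\beta b|\,d\xi$.
\item The paper's scheme is marginally more flexible. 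It needs only a sup bound on the kernel plus row and column integrability, not a decaying convolution-type majorant. Here both are available, so your argument is the shorter one.
\end{itemize}
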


%proof
\begin{proof}
	Since $m>3$ and hence $b\in L^\infty(\R^3_x,L^1(\R^3_\xi))$, we can write
	$$
	b(x, hD)f(x)=\int_{\R^3}K(x,y,h)f(y)\,dy
	$$
	with
	$$
	K(x,y,h)=\frac{1}{(2\pi h)^3}\int_{\R^3}e^{i\frac{(x-y)}{h}\cdot\xi}b(x,\xi)d\xi=\frac{1}{(\sqrt{2\pi} h)^3}\hat{b}\Big(x,\frac{x-y}{h}\Big),
	$$
	where $\hat b$ is the Fourier transform of $b$ in  the second variable. In particular,
	$$
	\|b(x,hD)\|_{L^1\to L^\infty}\lesssim h^{-3}\|\<\xi\>^m b\|_{L^\infty}.
	$$
	Moreover, since
	\begin{align*}
		\int_{\R^3}|K(x,y,h)|dx&=\frac{1}{(2\pi)^{3/2}}\int_{\R^3}|\hat b(y+zh,z)|dz\lesssim \sup_{|\beta|\le4}\|\partial_\xi^\beta b\|_{L^\infty},\\
		\int_{\R^3}|K(x,y,h)|dy&=\frac{1}{(2\pi)^{3/2}}\int_{\R^3}|\hat b(x,y)|dy\lesssim \sup_{|\beta|\le4}\|\partial_\xi^\beta b\|_{L^\infty},
	\end{align*}
	the Schur lemma shows that for all $1\le p\le \infty$ $$\|b(x,hD)\|_{L^p\to L^p}\lesssim \sup_{|\beta|\le4}\|\partial_\xi^\beta b\|_{L^\infty}.$$
	The desired result thus follows by interpolation. 
\end{proof}

We next construct the approximation of $\varphi(-h^2\Delta_G)$ by $h$-PDOs.

%proposition
\begin{proposition}
	\label{proposition_appendix_2}
	Let $\varphi\in C_c^\infty(\R)$ and $1\le p\le q\le \infty$. Then there exists $N(p,q)\ge0$ such that, for any $N\ge N(p,q)$, there exist symbols $a_0,...,a_N\in S^{-\infty}$ satisfying $\supp a_j\subset \supp \varphi\circ p_0$ such that
	\begin{align}
		\label{proposition_appendix_2_1}
		\Big\|\varphi(-h^2\Delta_G)-\sum_{j=0}^Nh^j a_{j}(x,hD)\Big\|_{L^p\to L^q}\le C_{N,p,q}h^{N-N(p,q)},
	\end{align}
	uniformly in $h\in (0,1]$, where $p_0(x,\xi)=G(x)\xi\cdot\xi$ is the principal symbol of $-\Delta_G$. 
\end{proposition}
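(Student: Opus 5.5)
We will construct the approximation through the Helffer--Sjöstrand formula combined with a semiclassical parametrix for the resolvent of $-h^2\Delta_G$. Choose an almost analytic extension $\tilde\varphi\in C_c^\infty(\C)$ of $\varphi$, so that $|\bar\partial\tilde\varphi(z)|\le C_M|\Im z|^M$ for every $M$. Then
\[
\varphi(-h^2\Delta_G)=\frac1\pi\int_{\C}\bar\partial\tilde\varphi(z)\,(-h^2\Delta_G-z)^{-1}\,dL(z).
\]
Write $P_h:=-h^2\Delta_G$. In semiclassical quantization, $P_h=p(x,hD)$ with full symbol
\[
p=p_0+hp_1+h^2p_2,\qquad p_0=G(x)\xi\cdot\xi .
\]
Here $p_1$ is linear in $\xi$ and collects the magnetic cross terms $-2A\cdot G\xi$ together with the first derivatives of $G$. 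The term $p_2$ is independent of $\xi$, and it is a smooth bounded function built from $A$, $\nabla A$ and $\nabla G$. Because every derivative of $A$ and $G$ is bounded, each $p_j$ lies in the appropriate symbol class $S^{2-j}$, uniformly in $h$. So the magnetic potential only changes lower-order terms, and the standard construction of \cite{BGT} carries over.

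For $z\notin\R$, we build the parametrix symbols recursively. Set
\[
q_0=(p_0-z)^{-1},\qquad q_k=-(p_0-z)^{-1}\sum\Big(\text{terms of order }h^k\text{ in }\textstyle\sum_{|\alpha|+j+l=k}\frac{1}{\alpha!}\partial_\xi^\alpha p_j\,D_x^\alpha q_l,\ l<k\Big).
\]
By the composition formula, $q_k$ is a finite sum of terms
\[
c_{k,m}(x,\xi)\,(p_0-z)^{-1-m},\qquad c_{k,m}\in S^{2m-k},\quad 1\le m\le 2k .
\]
Define $Q_N(z)=\sum_{k\le N}h^kq_k(x,hD)$. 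Then
\[
(P_h-z)Q_N(z)=I+h^{N+1}R_N(z,h),
\]
and the remainder $R_N$ is an $h$-PDO whose seminorms grow at most like $|\Im z|^{-K(N)}$.

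Next we plug $Q_N$ into the Helffer--Sjöstrand integral. Since $\tilde\varphi$ is almost analytic, Cauchy's formula turns
\[
\frac1\pi\int\bar\partial\tilde\varphi(z)\,(p_0-z)^{-1-m}\,dL(z)
\]
into a multiple of $\varphi^{(m)}(p_0)$. This gives symbols
\[
a_k=\sum_m \frac{1}{m!}\,c_{k,m}\,\varphi^{(m)}(p_0),
\]
which belong to $S^{-\infty}$ and satisfy $\supp a_k\subset\supp\varphi\circ p_0$; this is compact in $\xi$ by \eqref{equ:Gpd}. For the error we use the resolvent identity
\[
(P_h-z)^{-1}=Q_N(z)-h^{N+1}(P_h-z)^{-1}R_N(z).
\]
The error operator is then
\[
\frac{h^{N+1}}{\pi}\int\bar\partial\tilde\varphi\,(P_h-z)^{-1}R_N(z)\,dL(z).
\]
The factor $|\bar\partial\tilde\varphi|\lesssim|\Im z|^{M}$ absorbs both $|\Im z|^{-1}$ from the $L^2$ resolvent bound and $|\Im z|^{-K(N)}$ from $R_N$. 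This already gives an $L^2\to L^2$ error of size $O(h^{N+1})$.

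The main obstacle is upgrading this to $L^p\to L^q$ bounds, since the exact resolvent $(P_h-z)^{-1}$ is only controlled on $L^2$. We plan to use the same sandwiching trick as in \cite{BGT}. Write the error as $E=\varphi(P_h)-\mathrm{Op}_N$, and let $\tilde\chi\in C_c^\infty$ equal $1$ near $\supp\varphi$. Then
\[
\varphi(P_h)=\tilde\chi(P_h)\varphi(P_h)\tilde\chi(P_h).
\]
Apply the $L^2$ expansion to each of the three factors and expand the product. The resulting error consists of $O(h^{N+1})$ operators on $L^2$, sandwiched between $h$-PDOs with $S^{-\infty}$ symbols. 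By Lemma \ref{lemma_appendix_1}, such $h$-PDOs map $L^p\to L^2$ and $L^2\to L^q$ with losses at most $h^{-3}$. Whatever smoothing is still needed can be taken from Sobolev embedding, using Lemma \ref{lemma_2_2} to compare $(1-\Delta_G)^{k}$ with $(1-\Delta)^k$ and $h$-rescaled powers $(1+P_h)^{k}$, which commute with $\varphi(P_h)$. This produces a fixed loss $h^{-N(p,q)}$, giving \eqref{proposition_appendix_2_1}. Finally, the composition of the $h$-PDO factors is re-expanded by symbolic calculus, and this reproduces the same $a_j$ up to $O(h^{N+1})$.
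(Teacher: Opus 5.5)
Your construction of the symbols $a_k$ is the same as the paper's: the Helffer--Sj\"ostrand formula, the recursive parametrix $q_k=\sum_m c_{k,m}(p_0-z)^{-1-m}$, the identification of $a_k$ through $\varphi^{(m)}(p_0)$, and the resolvent identity for the remainder. Your $O(h^{N+1})$ bound on $L^2$ is also fine.

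\textbf{The gap is the upgrade to $L^p\to L^q$.} Any scheme in which an error controlled only on $L^2$ is flanked by good operators needs the right factor to map $L^p\to L^2$ and the left factor to map $L^2\to L^q$. Lemma \ref{lemma_appendix_1} and Sobolev embeddings ($L^p\hookrightarrow H^{-\sigma}$, $H^{\sigma}\hookrightarrow L^q$) give this only for $p\le 2\le q$.
\begin{itemize}
\item On $\R^3$ the relevant symbols, e.g.\ $\varphi(p_0(x,\xi))$, do not decay in $x$. So no such factor maps $L^p\to L^2$ when $p>2$; test on $e^{ix\cdot\xi_0/h}\chi(\epsilon x)$ with $\varphi(|\xi_0|^2)\neq0$ and let $\epsilon\to0$.
\item Bounds $L^p\to L^p$ for $p\neq 2$ require off-diagonal decay of the remainder's kernel. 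An $L^2$ operator-norm bound cannot supply this.
\item Hence the cases $2<p\le q$ and $p\le q<2$ of the statement are never reached. They are used later: $p=q=6/5$ when Lemma \ref{lemma_CGT_1} is applied with $s=1$ in the proof of Proposition \ref{prop:CGT}, and $p=q=\infty$ in the proof of Lemma \ref{lemma_CGT_2}.
\end{itemize}
The device from \cite{BGT} works on a compact manifold precisely because $L^\infty\subset L^2$ there.

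Your bookkeeping is also off. Expanding $(T_1+h^{N+1}E_1)(T_2+h^{N+1}E_2)(T_3+h^{N+1}E_3)$ produces terms such as $E_1T_2T_3$ or $E_1E_2E_3$, in which the $L^2$-only error is not flanked at all. The correct form of the trick is to conjugate the remainder by the differential operator $(1+P_h)^k$, using that $(1+\lambda)^{2k}\varphi(\lambda)\in C_c^\infty$. Even so, it only yields $H^{-2k}\to H^{2k}$ bounds, i.e.\ again $p\le 2\le q$.

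\textbf{The missing idea} is to show that the exact resolvent is itself a semiclassical pseudodifferential operator with a controlled symbol. The paper obtains this from Beals' characterization.
\begin{itemize}
\item The iterated commutators $h^{-1}[x_j,\cdot]$ and $[\partial_{x_j},\cdot]$ of $(-h^2\Delta_G-z)^{-1}$ are products of resolvents and $h$-differential operators. They are therefore bounded on $L^2$, with polynomial losses in $|\Im z|^{-1}$.
\item This gives $(-h^2\Delta_G-z)^{-1}=\widetilde r(x,hD,z,h)$ with $|\partial_x^\alpha\partial_\xi^\beta\widetilde r|\lesssim\langle\xi\rangle^{-2-|\beta|}\langle z\rangle^2|\Im z|^{-1-|\alpha|-|\beta|}$.
\item Then $(P_h-z)^{-1}R_N(z)$ is an $h$-PDO with symbol in $S^{-5-N}$, again with polynomial $|\Im z|^{-1}$ losses.
\end{itemize}
Lemma \ref{lemma_appendix_1} then applies directly for every $1\le p\le q\le\infty$. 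Its Schur test for $L^p\to L^p$ is exactly where the kernel decay enters, and the $L^1\to L^\infty$ kernel bound plus interpolation supply the rest. Integrating against $\bar\partial\tilde\varphi$, with $|\bar\partial\tilde\varphi|\lesssim|\Im z|^M$ for $M$ large, finishes the proof.
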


\begin{proof}
	%Although the proof follows a similar line as in \cite[Proposition 2.5]{Bouclet}, we give its detail for the sake of completeness.  
	We begin with Helffer-Sj\"ostrand's formula (see \cite[Section 8]{DiSj}): 
	\begin{align*}
		\varphi(-h^2\Delta_G)=-\frac{1}{2\pi i}\int_{\C}\partial_{\overline z}\widetilde\varphi(z)(-h^2\Delta_G-z)^{-1}d z d\overline z,
	\end{align*}
	where $dz\,d\overline z=-i2dxdy$ for $z=x+iy$ and $\widetilde \varphi$ is an almost analytic extension of $\varphi$ satisfying $\widetilde \varphi\in C_c^\infty(\R^2)$ with the identification $\C\cong \R^2$, $\widetilde \varphi(x)= \varphi(x)$ for all $x\in \R$ and $|\partial_{\overline z}\widetilde \varphi(z)|\le C_M|\Im z|^M$ for all $M\ge0$. %Note that the integral converges in the operator norm topology on $L^2$since$$\int_\C|\partial_{\overline z}\widetilde \varphi(z)\|(-h^2\Delta_G-z)^{-1}\|_{L^2\to L^2}|dzd\overline z|\le C_N\int_{\supp\widetilde \varphi}|\Im z|^{N-1}|dzd\overline z|<\infty$$
	To find symbols $a_j$ as in Proposition~\ref{proposition_appendix_2}, we construct the semiclassical parametrix of $(-h^2\Delta_G-z)^{-1}$. To this end, we write
	\begin{align}
		\label{proof_proposition_appendix_2_-1}
		-h^2\Delta_G%=(h\nabla-ihA)\cdot G(h\nabla-ihA)
		%&=-h^2\nabla\cdot G\nabla -2ih^2 A\cdot G\nabla-ih^2(\nabla\cdot GA)(x)+h^2(A\cdot GA)(x)\\
		=p_0(x,hD)+hp_1(x,hD)+h^2p_2(x),
	\end{align}
	where $p_0(x,\xi)=G(x)\xi\cdot\xi$ and $p_1(x,\xi)$ and $p_2(x)$ are the 1st order and potential terms of the symbol of $-h^2\Delta_G$, respectively. Note that $p_j\in S^{2-j}$ and $p_0$ satisfies 
	\begin{align}
		\label{proof_proposition_appendix_2_0}
		|p_0(x,\xi)-z|\gtrsim \<\xi\>^2|\Im z|.
	\end{align}
	By the composition rule of $h$-PDOs (see e.g. \cite{DiSj}),  one can find symbols $q_j\in S^{-2-j}$ for $j=0,...,N$ and $r_{N+1}\in S^{-3-N}$ such that
	$$
	(-h^2\Delta_G-z)\sum_{j=0}^Nh^jq_j(x,hD,z)=\Id+h^{N+1}r_{N+1}(x,hD,z,h).
	$$
	Indeed, $q_j=q_j(x,\xi,z)$ are defined inductively by
	\begin{align*}
		q_0=\frac{1}{p_0-z},\quad
		%q_1&=-\frac{1}{p_0-z}\Big\{\sum_{|\alpha|=1}i^{-|\alpha|}(\partial_\xi^\alpha q_0)(\partial_x^\alpha p_0)+q_0\cdot p_1\Big\},\\
		%q_2&=-\frac{1}{p_0-z}\Big\{\sum_{|\alpha|=2}i^{-|\alpha|}(\partial_\xi^\alpha q_0)(\partial_x^\alpha p_0)+\sum_{|\alpha|=1}i^{-|\alpha|}(\partial_\xi^\alpha q_1)(\partial_x^\alpha p_1)+q_0\cdot p_2\Big\},\\
		q_j=-\frac{1}{p_0-z}\sum_{\ell=0,1,2}\sum_{\substack{|\alpha|+k=j-\ell\\0\le k\le j-1}} i^{-|\alpha|}(\partial_\xi^\alpha q_k)(\partial_x^\alpha p_\ell).%+\sum_{|\alpha|+k=j-1\atop 0\le k\le j-1}i^{-|\alpha|}(\partial_\xi^\alpha q_k)(\partial_x^\alpha p_1)\\&\quad +\sum_{|\alpha|+j=k-2}i^{-|\alpha|}(\partial_\xi^\alpha q_j)(\partial_x^\alpha p_2)\Big\},\quad k\ge3. 
	\end{align*}
	By \eqref{proof_proposition_appendix_2_0}, $q_0$ satisfies
	$$
	|\partial_x^\alpha\partial_\xi^\beta q_0(x,\xi,z)|\le C_{\alpha\beta}\<\xi\>^{-2-|\beta|}|\Im z|^{-1-|\alpha|-|\beta|}.
	$$
	It also follows from an induction argument with respect to $N$ that
	\begin{align}
		\nonumber
		|\partial_x^\alpha\partial_\xi^\beta q_j(x,\xi,z)|&\le C_{\alpha\beta}\<\xi\>^{-2-j-|\beta|}|\Im z|^{-n_k-|\alpha|-|\beta|},\quad j=1,...,N,\\
		\label{proof_proposition_appendix_2_1}
		|\partial_x^\alpha\partial_\xi^\beta r_{N+1}(x,\xi,z,h)|&\le C_{\alpha\beta}\<\xi\>^{-3-N-|\beta|}|\Im z|^{-n_{N+1}-|\alpha|-|\beta|}
	\end{align}
	with some $n_1,...,n_{N+1}\in \mathbb N$,  and that, for each $j=1,...,N$,  $q_j$ is of the form
	$$
	q_j=\sum_{k=1}^{2j-1}\frac{\widetilde q_{jk}}{(p_0-z)^{1+k}}
	$$
	with some symbols $\widetilde q_{jk}=\widetilde q_{jk}(x,\xi)$ which are independent of $z,h$, polynomials in $\xi$ of order $2k-j$ and belong to $S^{2k-j}$. Hence, we have
	$$
	(-h^2\Delta_G-z)^{-1}=\sum_{j=0}^Nh^jq_j(x,hD,z)-h^{N+1}(-h^2\Delta_G-z)^{-1}r_{N+1}(x,hD,z,h).
	$$
	Plugging the identity 
	$$
	-\frac{1}{2\pi i}\int_{\C}\partial_{\overline z}\widetilde\varphi(z)(p_0-z)^{-1-k}d z d\overline z=\frac{(-1)^k}{k!}\varphi^{(k)}\circ p_0
	$$
	into Helffer-Sj\"ostrand's formula shows
	$$
	\varphi(-h^2\Delta_G)=\sum_{j=0}^Nh^j a_j(x,hD)+h^{N+1}R(h,N),
	$$
	where $a_0(x,\xi)=\varphi(p_0(x,\xi))$ and
	\begin{align*}
		a_j(x,\xi)&=\sum_{k=1}^{2j-1}\frac{(-1)^k}{k!}\widetilde q_{jk}(x,\xi)\varphi^{(k)}(p_0(x,\xi)),\quad j=1,...,N,\\
		R(h,N)&=\frac{1}{2\pi i}\int_{\C}\partial_{\overline z}\widetilde\varphi(z)(-h^2\Delta_G-z)^{-1} r_{N+1}(x,hD,z,h)d z d\overline z.
	\end{align*}
	By the ellipticity of $G$ and the condition $\varphi\in C_0^\infty(\R)$, we have $a_j\in S^{-\infty}$ and $\supp a_j\subset \supp \varphi\circ p_0\subset \{c_1\le |\xi|\le c_2\}$ with some $c_1,c_2>0$. In particular, 
	$$
	\|a_j(x,hD)\|_{L^p\to L^q}\lesssim h^{-3(\frac1p-\frac1q)}
	$$
	for any $1\le p\le q\le \infty$ uniformly in $h\in (0,1]$ by Lemma \ref{lemma_appendix_1}. 
	
	To deal with the term $R(h,N)$, we shall show that $(-h^2\Delta_G-z)^{-1}$ is an $h$-PDO. Define the multiple commutator operator $L^\alpha \widetilde L^\beta$ with $\alpha,\beta\in \Z_+^3 $ by
	$$
	L_{j}(H)=h^{-1}[x_j,H],\quad \widetilde L_{j}(H)=[\partial_{x_j},H],\quad L^\alpha \widetilde L^\beta (H)=L_1^{\alpha_1}\cdots L_d^{\alpha_d}\widetilde L_1^{\beta_1}\cdots \widetilde L_d^{\beta_d}(H)
	$$
	for a given operator $H$. A direct computation yields
	$$
	L_j((-h^2\Delta_G-z)^{-1})=(-h^2\Delta_G-z)^{-1}\cdot L_j(h^2\Delta_G)\cdot (-h^2\Delta_G-z)^{-1}.
	$$
	The same identity also holds with $L_j$ replaced by $\widetilde L_j$. By an induction, we find that $L^\alpha \widetilde L^\beta ((-h^2\Delta_G-z)^{-1})$ is a linear combination of
	$$
	(-h^2\Delta_G-z)^{-1}\Big(\prod_{k=1}^\ell L^{\alpha^{(k)}}\widetilde L^{\beta^{(k)}}(h^2\Delta_G)\Big) (-h^2\Delta_G-z)^{-\ell},
	$$
	where $\alpha^{(k)},\beta^{(k)}\in \Z_+^3$, $\alpha^{(k)},\beta^{(k)}\neq0$, $\alpha^{(1)}+\cdots+\alpha^{(\ell)}=\alpha$, $\beta^{(1)}+\cdots+\beta^{(\ell)}=\beta$ and $1\le \ell\le |\alpha|+|\beta|$. Since 
	\begin{align*}
		\|(-h^2\Delta_G-z)^{-1}L^{\alpha^{(k)}}\widetilde L^{\beta^{(k)}}(h^2\Delta_G)\|_{L^2\to L^2}&\le C_{\alpha^{(k)}\beta^{(k)}}\<z\>|\Im z|^{-1},\\
		\|(-h^2\Delta_G-z)^{-\ell}\<hD\>^2\>\|\le C_\ell \<z\>|\Im z|^{-\ell},
	\end{align*}
	this formula implies
	$$
	\|L^\alpha \widetilde L^\beta ((-h^2\Delta_G-z)^{-1})\<hD\>^2\|_{L^2\to L^2}\le C_{\alpha\beta} \<z\>^2|\Im z|^{-1-|\alpha|-|\beta|}. 
	$$
	Therefore, Beal's characterization of $h$-PDOs (see \cite[Proposition 8.3]{DiSj}) shows $$(-h^2\Delta_G-z)^{-1}=\widetilde r(x,hD,z,h)$$ with some symbol $\widetilde r(x,\xi,z,h)$ satisfying
	\begin{align}
		\label{proof_proposition_appendix_2_2}
		|\partial_x^\alpha \partial_\xi^\beta \widetilde r(x,\xi,z,h)|\le C_{\alpha\beta}\<\xi\>^{-2-|\beta|}\<z\>^2|\Im z|^{-1-|\alpha|-|\beta|}
	\end{align}
	uniformly in $x,\xi\in \R^3$, $z\in \supp \widetilde \varphi$ and $h\in (0,1]$. By the composition rule of $h$-PDOs and \eqref{proof_proposition_appendix_2_1} and \eqref{proof_proposition_appendix_2_2}, $\widetilde r(x,hD,z,h)r_{N+1}(x,hD,z,h)$ is an $h$-PDO with some symbol $s(x,\xi,z,h)$ satisfying
	$$
	|\partial_x^\alpha \partial_\xi^\beta s(x,\xi,z,h)|\le C_{\alpha\beta}\<\xi\>^{-5-N-|\beta|}\<z\>^2|\Im z|^{-1-n_{N+1}-|\alpha|-|\beta|}
	$$
	which, together with Lemma \ref{lemma_appendix_1}, shows
	$$
	\|s(x,hD,z,h)\|_{L^p\to L^q}\lesssim h^{-3(\frac1p-\frac1q)}\<z\>^2|\Im z|^{-5-n_{N+1}}. 
	$$
	Plugging this estimate into the formula of $R(h,N)$ and using the bound $|\partial_{\overline z}\widetilde \varphi(z)|\le C_M|\Im z|^M$ for $M\ge 5+n_{N+1}$ and the fact $\widetilde \varphi \in C_c^\infty(\R^2)$, we obtain \eqref{proposition_appendix_2_1}. 
\end{proof}

%Lemma \ref{lemma_CGT_1} clearly follows from Lemma \ref{lemma_appendix_1} and Proposition \ref{proposition_appendix_2}. 

%proof
\begin{proof}[Proof of Lemma \ref{lemma_CGT_1}]
	For the first estimate, choose $\psi_0\in C_c^\infty(\R)$ such that $\psi_0\equiv1$ on $\supp\varphi$. Then $\varphi(-h^2\Delta_G)=\psi_0(-h^2\Delta_G)\varphi(-h^2\Delta_G)$, and \eqref{lemma_CGT_1_1} follows from Lemma~\ref{lemma_appendix_1} and Proposition~\ref{proposition_appendix_2}. For the second estimate, the additional hypothesis $\supp\varphi\Subset(0,\infty)$ allows us to choose $\psi\in C_c^\infty((0,\infty))$ such that $\psi\equiv1$ on $\supp\varphi$. We then write
	$$
	\psi(-h^2\Delta_G)=\psi(-h^2\Delta_G)(-h^2\Delta_G)^{-s/2}\cdot (-h^2\Delta_G)^{s/2}.
	$$
	Since $\psi(\lambda)\lambda^{-s/2}\in C_c^\infty((0,\infty))$, \eqref{lemma_CGT_1_1} and Lemma \ref{lemma_2_2} imply \eqref{lemma_CGT_1_2} 
	%\begin{align*}\|\psi(-h^2\Delta_G)f\|_{L^q}&\lesssim h^{-3(1/p-1/q)-s}\|(-\Delta_G)^{s/2}f\|_{L^p}\\&\lesssim h^{-3(1/p-1/q)-s}\|f\|_{W^{s,p}_G}\\&\lesssim h^{-3(1/p-1/q)-s}\|f\|_{W^{s,p}}\end{align*}
	if  $p>1$ and $s\ge0$. \end{proof}

%proof
\begin{proof}[Proof of Lemma \ref{lemma_CGT_2}]
	By replacing $t$ with $th$, it follows from Keel-Tao's theorem \cite{KeelTao}, Lemma \ref{lemma_CGT_1} and Proposition \ref{proposition_appendix_2} that it suffices to show 
	\begin{align}
		\label{proof_lemma_CGT_2_1}
		\|\varphi(-h^2\Delta_G)e^{-ith\Delta_G}b(x,hD)\|_{L^1\to L^\infty}\lesssim |th|^{-3/2},\quad 0<t<\alpha,\ h\in (0,1],
	\end{align}
	where $b\in S^{-\infty}$ supported in $\R^3\times B$ with a bounded ball $B$. 
	
	To this end, by replacing $t$ with $th$, we shall construct the semiclassical parametrix of $e^{-ith\Delta_G}b(x,hD)f(x)$ up to $|t|\le \alpha$ in the following form
	$$
	w_N(t,x)=\frac{1}{(2\pi h)^{3}}\int_{\R^3}e^{\frac ih(\Psi(t,x,\xi)-y\cdot \xi)}\sum_{j=0}^Nh^j b_j(t,x,\xi)f(y)dyd\xi.
	$$
	Recalling the formula \eqref{proof_proposition_appendix_2_-1}, we have as an operator 
	\begin{align}
		\label{proof_lemma_CGT_2_2}
		&e^{-\frac ih\Psi}(hi\partial_t+h^2\Delta_G)e^{\frac ih\Psi}\\
		\nonumber
		&=-\{\partial_t \Psi+p_0(x,\nabla_x\Psi)\}+ih\left(\partial_t+\mathcal X\cdot \nabla_x+\mathcal Y\right)+h^2\Delta_G,
	\end{align}
	where $\mathcal X$ is a vector field and $\mathcal Y$ is a potential given by
	\begin{align*}
		\mathcal X(t,x,\xi)&=(\nabla_\xi p)(x,\nabla_x\Psi)(t,x,\xi),\\
		\mathcal Y(t,x,\xi)&=(p_0(x,\nabla_x)\Psi)(t,x,\xi)-i(p_1(x,\nabla_x)\Psi)(t,x,\xi)
	\end{align*}
	Since $p_0$ is uniformly positive definite and has smooth bounded coefficients $G_{j,k}$, by the Hamilton-Jacobi theory, there exists a sufficiently small $\alpha>0$ and a smooth function $\Psi(t,x,\xi)$ defined in $[-\alpha,\alpha]\times \R^6$ satisfying the Hamilton-Jacobi equation
	\begin{align}
		\label{proof_lemma_CGT_2_3}
		\partial_t \Psi+p_0(x,\nabla_x\Psi)=0;\quad \Psi|_{t=0}=x\cdot\xi
	\end{align}
	for $(x,\xi)\in \R^3\times B$. Moreover, 
	\begin{align}
		\label{proof_lemma_CGT_2_4}
		|\partial_x^\alpha\partial_\xi^\beta(\Psi(t,x,\xi)-x\cdot \xi+tp_0(x,\xi))|\le C_{\alpha\beta}|t|^2
	\end{align}
	for $(x,\xi)\in \R^3\times B$ and $|t|\le \alpha$. Then we observe from the identities \eqref{proof_lemma_CGT_2_2} and \eqref{proof_lemma_CGT_2_3} that, in order that $w_N$ solves approximately the semiclassical Schr\"odinger equation
	$$
	(hi\partial_t+h^2\Delta_G)w_N=0;\quad w_N|_{t=0}=b(x,hD)f,
	$$
	the symbols $b_0,...,b_N$ should satisfy the transport equations
	\[
		\left\{\begin{aligned}
			&\partial_t b_0+\mathcal X\cdot\nabla_xb_0+\mathcal Yb_0=0;\quad b_0|_{t=0}=b,\\
			&\partial_t b_j+\mathcal X\cdot\nabla_xb_j+\mathcal Yb_j-i\Delta_Gb_j=0;\quad b_j|_{t=0}=0,\quad j=1,\ldots,N.
		\end{aligned}
		\right.
	\]
	Since $\mathcal X$ and $\mathcal Y$ are smooth and all of their derivatives are bounded on $[-\alpha,\alpha]\times \R^3\times B$, one can in fact construct such symbols $b_0,...,b_N\in S^{-\infty}$ by taking $\alpha$ small further if necessary so that they satisfy, for any $M\ge0$, 
	\begin{align}
		\label{proof_lemma_CGT_2_5}
		|\partial_x^\alpha\partial_\xi^\beta b_j(t,x,\xi)|\le C_{\alpha\beta M}\<\xi\>^{-M},\quad j=1,...,N,
	\end{align}
	on $[-\alpha,\alpha]\times \R^3\times B$. Therefore, we have the following Duhamel formula
	$$
	e^{-ith\Delta_G}b(x,hD)f=w_N-ih^N\int_0^te^{-i(t-s)h\Delta_G}r_{N+1}(s,x,hD)fds
	$$
	with some symbol $r_{N+1}$ satisfying the same estimates as \eqref{proof_lemma_CGT_2_5}. 
	
	Now, by \eqref{proof_lemma_CGT_2_4} and the ellipticity of $G$, we can apply the stationary phase theorem and Lemma \ref{lemma_CGT_1} to obtain
	$$
	\|\varphi(-h^2\Delta_G)w_N\|_{L^\infty}\lesssim \|w_N\|_{L^\infty}\lesssim |th|^{-\frac 32}
	$$
	for $0<t<\alpha$ and $h\in (0,1]$. To estimate the remainder term, we use Lemmas \ref{lemma_CGT_1} and \ref{lemma_appendix_1} to observe that
	\begin{align*}
		&\Big\|h^N\varphi(-h^2\Delta_G)\int_0^te^{-i(t-s)h\Delta_G}r_{N+1}(s,x,hD)fds\Big\|_{L^\infty}\\
		&\lesssim h^{N-\frac32}\int_0^t \|r_{N+1}(s,x,hD)f\|_{L^2}ds\\
		&\lesssim h^{N-3}|t|\|f\|_{L^1},
	\end{align*}
	which completes the proof of \eqref{proof_lemma_CGT_2_1} and thus the lemma. 
\end{proof}
\section{A complementary abstract result for constant damping}\label{appendix:abstract}
	This appendix records a complementary result for nonlinear Schr\"odinger equations with constant damping and a general positive Hamiltonian. The result is logically independent of Theorems~\ref{T:gwp} and~\ref{T:main}: it relies on global Strichartz estimates and therefore does not cover the trapping metrics treated in the main body. 

	The proof follows Inui's strategy and is given below. However, compared to Inui's result,  we can provide several important examples including the fractional NLS, magnetic NLS and second-order elliptic operator with divergence form. 
	\begin{example}[Fractional Schr\"odinger operator]
		Let $H=(-\Delta)^s$ with $0<s\le 1$, so $q(u)=\|(-\Delta)^{s/2}u\|_2^2$ and $D(H^{1/2})=H^s(\mathbb R^d)$.
		The energy-subcritical condition is
		\[
		1<p<1+\frac{4s}{d-2s}\quad (d>2s),
		\qquad \text{and all }p>1\text{ if }d\le 2s.
		\]
		The fractional Gagliardo--Nirenberg inequality reads
		\[
		\|u\|_{L^{p+1}}^{p+1}
		\le
		C_{GN}\,\|(-\Delta)^{s/2}u\|_{2}^{\frac{d}{2s}(p-1)}\,\|u\|_2^{(p+1)-\frac{d}{2s}(p-1)},
		\]
		where the sharp constant $$C_{GN}=\frac{2s(p+1)}{d(p-1)}\|Q\|_{L^2}^{\frac{d(p-1)}{2s}-p-1}\|Q\|_{\dot H^s}^{2-\frac{d(p-1)}{2s}}$$
		and $Q$ is the static solution to the elliptic equation
		\begin{align*}
			(-\Delta)^sQ+Q=|Q|^{p-1}Q.
		\end{align*}
		
	\end{example}
	
	For the magnetic Schr\"odinger operator $H_A=-\Delta_A+V(x)=(-i\nabla-A)^2+V(x)$, Fanelli and Vega \cite{Fanelli-Vega} established magnetic virial identities for both wave and Schr\"odinger equations. Under some assumptions on the electric and magnetic potentials $V,A$, they also obtain some weak dispersive estimates and Strichartz estimates for a restricted range of admissible pairs. Later, D'Ancona-Fanelli-Vega-Visciglia \cite{DFVV-endpoint} proved the full range of Strichartz estimate which includes the endpoint case in $d=3$. Colliander-Czubak-Lee \cite{CCL} proved the interaction Morawetz estimate under the assumption of \cite{Fanelli-Vega} and additional assumption of $DA=(\partial_jA_i)$ and $\nabla V$.
	\begin{example}[Magnetic Schr\"odinger operator]Let $A\in (L_{\operatorname{loc}}^2(\R^d))^d$, then
		$H=-\Delta_A=(-i\nabla-A(x))^2$ be defined as the Friedrichs operator associated with
		\[
				q_A(u)=\|(\nabla-iA)u\|_2^2,\qquad D(q_A)=\{u\in L^2(\mathbb R^d):\nabla_Au\in L^2(\mathbb R^d)\},
				\]
		for a real magnetic potential $ A$ such that the form is closed and nonnegative.
		The key additional ingredient is the diamagnetic inequality:
		\begin{equation}\label{eq:diamag-1}
			|\nabla |u||\ \le\ |(\nabla-i A)u|\qquad \text{a.e. }x\in\R^d.
		\end{equation}
		Consequently, applying classical G-N inequality to $|u|$ and using diamagnetic inequality \eqref{eq:diamag-1} gives
		\[
		\|u\|_{L^{p+1}}^{p+1}
		\le
		C_{\mathrm{GN}}\,
		\|(\nabla-i A)u\|_2^{\frac{d}{2}(p-1)}\|u\|_2^{(p+1)-\frac{d}{2}(p-1)}.
		\]
	\end{example}
	\begin{remark}
				The magnetic quadratic form associated with $(\nabla - iA)^2$ is well-defined
				and closed under suitable assumptions on $A$, as shown by Leinfelder-Simader \cite{Leinfelder}.
				In particular, if $A \in (L^2_{\mathrm{loc}}(\mathbb{R}^d))^d$, the form is closed
				on the natural domain
				\[
				\{u \in L^2 : (\nabla - iA)u \in L^2\}.
				\]
	\end{remark}

\begin{example}[Divergence-form elliptic operators]
Let
		\[
		H=-\mathrm{div}(G(x)\nabla)
		\]
		on $L^2(\mathbb R^d)$, where $G(x)$ is symmetric and uniformly elliptic:
		there exist $0<\lambda\le\Lambda$ such that
		\[
		\lambda|\xi|^2\le \xi^{\mathsf T}G(x)\xi\le \Lambda|\xi|^2\quad\forall \xi,\ \text{a.e. }x.
		\]
		Define $H$ by Friedrichs from
		\[
		q_G(u)=\int_{\mathbb R^d}\nabla u(x)^{\mathsf T}G(x)\nabla\overline{u(x)}\,dx,\qquad D(q_G)=H^1(\mathbb R^d).
		\]
		Then we have $q_G(u)\simeq \|\nabla u\|_2^2$. It implies that the  Gagliardo-Nirenberg inequality similar to \eqref{eq:GN-abstract} holds. To apply the abstract $H^1$ scattering framework, one additionally needs global-in-time Strichartz estimates.
 Staffilani-Tataru \cite{Staffilani-Tataru} and Burq-G\'erard-Tzvetkov \cite{BGT} proved the local-in-time Strichartz estimate with loss of derivatives, while this loss will be removed if assuming the metric $G$ is non-trapping. We also refer to Bouclet-Tzvetkov \cite{Bouclet} for the case of long range perturbation of flat metric. Later, Marzuola-Metcalfe-Tataru \cite{MMT-JFA} proved the global-in-time Strichartz estimate for asymptotic Euclidean metric under non-trapping assumption in $d\geq3$ and its conical extension was proved by Hassell-Zhang \cite{Hassell-Zhang}.
	\end{example}
	Next, we provide another example which combines the previous two settings: magnetic Laplacian and variable coefficients.
	\begin{example}
		Let $G:\mathbb R^d\to\mathbb R^{d\times d}$ be a measurable symmetric matrix  satisfying the
		uniform ellipticity and boundedness: there exist $0<\lambda\le \Lambda<\infty$ such that
		\begin{equation}\label{eq:ellipticity-magnetic}
			\lambda |\xi|^2 \le \xi^{\mathsf T}G(x)\,\xi \le \Lambda |\xi|^2
			\qquad \forall \xi\in\mathbb R^d,\ \text{a.e. }x\in\mathbb R^d.
		\end{equation}
		Let $A:\mathbb R^d\to\mathbb R^d$ be a real-valued magnetic potential such that the form below is
		well-defined and closed (for instance $A\in L^2_{\mathrm{loc}}(\mathbb R^d)$).
		
		Define the covariant gradient $\nabla_{ A}:=\nabla-i A$ and consider the quadratic form
		\begin{equation}\label{eq:qGA}
			q_{G, A}(u)
			:=
			\int_{\mathbb R^d} \big(\nabla_{A}u(x)\big)^{\mathsf T}
			G(x)\,
			\overline{\nabla_{ A}u(x)}\ dx,
			\quad  D(q_{G,A}) = \{ u \in L^2(\mathbb{R}^d) : \nabla_A u \in L^2(\mathbb{R}^d)\}.
		\end{equation}
		By \eqref{eq:ellipticity-magnetic}, $q_{G,A}$ is nonnegative and equivalent to the magnetic form norm $\|\nabla_Au\|_2^2$ on its natural domain. Hence it is closed and defines a nonnegative selfadjoint operator $H\ge0$ via Friedrichs extension:
		\[
		H=-\nabla_A\cdot G(x)\nabla_A
		\quad\text{in the sense that}\quad
		\langle Hu,v\rangle = q_{G, A}(u,v)\ \ \ (u\in D(H),\ v\in D(q_{G, A})).
		\]
       
The associated energy is
\[
E(u)=\frac12\,q_{G,A}(u)-\frac{\mu}{p+1}\|u\|_{L^{p+1}}^{p+1}.
\]

Using the diamagnetic inequality
\begin{equation}\label{eq:diamag-GA}
	|\nabla |u||\ \le\ |\nabla_{ A}u|\qquad \text{a.e.}x\in\R^d,
\end{equation}
and \eqref{eq:diamag-GA},
\[
\|\nabla |u|\|_2^2 \le \|\nabla_{ A}u\|_2^2 \lesssim q_{G, A}(u),
\]
and applying the classical Gagliardo--Nirenberg inequality to $|u|$ yields, in the energy-subcritical range
\begin{align*}
	\|u\|_{p+1}^{p+1}
	\leq
	C_{\mathrm{GN}}\,
	q_{G,A}(u)^{\frac{d}{4}(p-1)}\|u\|_2^{(p+1)-\frac{d}{2}(p-1)},
\end{align*}
where
\[
1<p<1+\frac{4}{d-2}\quad(d>2),\qquad \text{and all }p>1\text{ if }d= 2.
\]
		Let $G$ be a non-trapping metric, assume that there exist $\mu>0$, for any $\alpha\in \Bbb Z_+^d$ such that \begin{align*}
    |\partial_x^\alpha (g^{jk}(x) - \delta_{jk})| &\leq C_\alpha \langle x \rangle^{-\mu - |\alpha|}, \\
    |\partial_x^\alpha A_j(x)| &\leq C_\alpha \langle x \rangle^{1 - \mu - |\alpha|}, \\
    |\partial_x^\alpha V(x)| &\leq C_\alpha \langle x \rangle^{2 - \mu - |\alpha|}, \quad x \in \mathbb{R}^d,
\end{align*} Mizutani \cite{Miz13} showed the local-in-time Strichartz estimate without loss of derivatives. Under the stronger smallness condition
\begin{align*}
  |G-I|+\langle x\rangle(|\nabla G|+|A|)+\langle x\rangle^2(|\nabla^2G|+|\nabla A|)\leq \varepsilon\langle x\rangle^{-\delta},\,\,\mbox{for some }\varepsilon,\delta>0,
\end{align*}
Cassano-D'Ancona \cite{CD16} proved the global-in-time Strichartz estimate.

	\end{example}
	\begin{remark}
		Under the uniform ellipticity of $G$,  we can show that $q_{A,G}(u)\simeq \|\nabla_Au\|_{L^2}^2$, which is useful in the focusing coercivity argument.
	\end{remark}

\subsection{Mass and energy decay under the damping flow}
\begin{lemma}[Mass dissipation identity]\label{lem:mass}
	Assume Assumption \ref{ass:damp}. Let $u$ be a sufficiently regular solution to \eqref{eq:damped}. Then
	\begin{equation}\label{eq:mass-id}
		\frac{d}{dt}\norm{u(t)}_{\mathcal H}^2 \;=\; -2\,\ip{\mathcal B u(t)}{u(t)}.
	\end{equation}
	In particular, $\norm{u(t)}_{\mathcal H}$ is nonincreasing in time.
	If  $a>0$, then
	\begin{equation}\label{eq:mass-exp}
		\norm{u(t)}_{\mathcal H}^2 \;\le\; e^{-2at}\,\norm{u(0)}_{\mathcal H}^2
		\qquad \forall t\ge 0.
	\end{equation}
\end{lemma}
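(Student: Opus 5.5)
The plan is to compute the time derivative of the mass directly from the equation \eqref{eq:damped}, and then use the lower bound $\mathcal B=bI$ with $b\ge a>0$ from Assumption~\ref{ass:damp} together with Gr\"onwall's inequality.

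\textbf{Step 1: the identity \eqref{eq:mass-id}.} For a sufficiently regular solution (say $u\in C^1([0,T);\mathcal H)\cap C([0,T);D(H))$), we write
\[
\partial_t u=-iHu-i\mu|u|^{p-1}u-\mathcal Bu .
\]
Then
\[
\frac{d}{dt}\norm{u}_{\mathcal H}^2=2\Re\ip{\partial_tu}{u}
=2\Re\big(-i\ip{Hu}{u}-i\mu\ip{|u|^{p-1}u}{u}\big)-2\Re\ip{\mathcal Bu}{u}.
\]
The first two inner products are real: $\ip{Hu}{u}=q(u)\ge0$ by selfadjointness, and $\ip{|u|^{p-1}u}{u}=\norm{u}_{L^{p+1}}^{p+1}$. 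Multiplying a real number by $-i$ gives a purely imaginary number, so these terms have zero real part. Since $\mathcal B=bI$, we have $\ip{\mathcal Bu}{u}=b\norm{u}^2$, which is real. This gives \eqref{eq:mass-id}. Monotonicity follows because $b\ge a>0$ makes the right-hand side nonpositive.

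\textbf{Step 2: the exponential bound \eqref{eq:mass-exp}.} From Step 1,
\[
\frac{d}{dt}\norm{u}^2=-2b\norm{u}^2\le -2a\norm{u}^2 .
\]
Hence $\frac{d}{dt}\big(e^{2at}\norm{u(t)}^2\big)\le0$, and integrating from $0$ to $t$ gives \eqref{eq:mass-exp}. In fact $\norm{u(t)}^2=e^{-2bt}\norm{u_0}^2$ exactly.

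\textbf{Step 3: extension to energy-class solutions.} For solutions that are only in $C([0,T);D(H^{1/2}))$, we regularize in one of two ways. One option is to approximate the data in $D(H)$ and use the continuous dependence from the Strichartz fixed-point argument of Assumption~\ref{ass-strichartz}. The other is to apply the mollifier $(1+\varepsilon H)^{-1}$ and pass to the limit in the integrated form of the identity. Alternatively, the duality pairing $\ip{Hu}{u}=q(u)$ between $D(H^{1/2})^*$ and $D(H^{1/2})$ already makes the computation rigorous, provided the nonlinearity lies in $L^{(p+1)'}$, which follows from the embedding $D(H^{1/2})\hookrightarrow L^{p+1}$.

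This regularization in Step 3 is the only step that needs care, and it is routine given the well-posedness theory.
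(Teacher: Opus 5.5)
Your proof is correct and matches the paper's: you differentiate $\|u(t)\|_{\mathcal H}^2$, observe that $\langle Hu,u\rangle$ and $\langle |u|^{p-1}u,u\rangle$ are real so those terms have zero real part, and then finish with $\mathcal B=bI$, $b\ge a>0$, and Gr\"onwall. Your Step 3 on regularization is more than the lemma needs, since it is stated only for sufficiently regular solutions, and your remark that in fact $\|u(t)\|_{\mathcal H}^2=e^{-2bt}\|u_0\|_{\mathcal H}^2$ is exactly the form the paper later uses in the mass-critical case.
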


\begin{proof}
	Differentiate $\norm{u(t)}_{\mathcal H}^2=\ip{u(t)}{u(t)}$:
	\[
	\frac{d}{dt}\norm{u(t)}_{\mathcal H}^2
	= 2\,\Ree\,\ip{\partial_t u(t)}{u(t)}.
	\]
	From \eqref{eq:damped},
	\[
	\partial_t u = -i(Hu+\mu |u|^{p-1}u) - \mathcal  Bu.
	\]
	Hence
	\[
	2\,\Ree\,\ip{\partial_t u}{u}
	=2\,\Ree\Big(\ip{-i(Hu+\mu |u|^{p-1}u)}{u}-\ip{\mathcal Bu}{u}\Big).
	\]
	The first term has zero real part since $\ip{Hu}{u}\in\mathbb R$  and
	\[
	\ip{|u|^{p-1}u}{u}=\int_{\R^d} |u|^{p+1}\,dx \in \mathbb R,
	\]
	independently of the sign of $\mu$.
	Thus we obtain \eqref{eq:mass-id}.
	Then Gr\"onwall's inequality and the lower bound of damping operator $\mathcal{B}$ yield \eqref{eq:mass-exp}.
\end{proof}

Next, we aim to establish the global well-posedness on $D(H^\frac{1}{2})$. By Assumption \ref{ass-strichartz}, $TT^*$ argument and Christ-Kiselev's lemma, we obtain the following inhomogeneous Strichartz estimate.
\begin{lemma}[Inhomogeneous Strichartz estimate] Let $(q,r)$ be the same as Assumption \ref{ass-strichartz}, then it holds
	\begin{align*}
		\Big\|\int_0^te^{i(t-s)H}F(s)\,ds\Big\|_{L_t^qH_x^{s,r}}\leq C\|F\|_{L_{t}^{q^\prime}H_x^{s,r^\prime}},
	\end{align*}
	where $q^\prime$ and $r^\prime$ are conjugate numbers of $q$ and $r$ respectively.
\end{lemma}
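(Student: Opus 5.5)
Following the sentence preceding the lemma, I would derive the inhomogeneous estimate from the homogeneous one in Assumption~\ref{ass-strichartz} by the $TT^*$ method, and then remove the time truncation $s<t$ with the Christ--Kiselev lemma.

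\textbf{Step 1: dual estimate.} Set $T u_0:=e^{itH}u_0$. Assumption~\ref{ass-strichartz} says $T$ is bounded from $\mathcal H$ to $L^q_tH^{s,r}_x$, at least when the data are measured in $D(H^{1/2})$. Since $e^{itH}$ commutes with $\langle H\rangle^{1/2}$, I would reduce to the $L^2$-level statement $\|e^{itH}f\|_{L^q_tL^r_x}\lesssim\|f\|_{L^2}$ applied to $f=\langle H\rangle^{1/2}u_0$. Duality then gives
\[
\Big\|\int e^{-isH}F(s)\,ds\Big\|_{L^2}\lesssim\|F\|_{L^{q'}_tL^{r'}_x}.
\]
Here I use that $\langle H\rangle^{1/2}$ is selfadjoint on $L^2$ and interpret the $H^{s,r'}$ norm by functional calculus in the same way as the $H^{s,r}$ norm.

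\textbf{Step 2: untruncated $TT^*$ bound.} Composing Step 1 with the homogeneous estimate gives
\[
\Big\|\int_{\R} e^{i(t-s)H}F(s)\,ds\Big\|_{L^q_tH^{s,r}_x}\lesssim \|F\|_{L^{q'}_tH^{s,r'}_x}.
\]
The powers of $\langle H\rangle^{1/2}$ pass through the propagator because of the commutation above.

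\textbf{Step 3: retarded version.} Since $q>q'$, the Christ--Kiselev lemma transfers the bound from the kernel integrated over all of $\R$ to the truncated kernel $\mathbf 1_{s<t}$. This yields the stated estimate for $\int_0^t$, after extending $F$ by zero outside $[0,\infty)$.

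\textbf{Main obstacle.} The delicate point is $q>q'$, that is, $q>2$. The assumption fixes $q=\frac{4s(p+1)}{d(p-1)}$, which is strictly greater than $2$ only in the strictly subcritical range. In the endpoint case $q=2$, Christ--Kiselev fails. There I would instead invoke the retarded inhomogeneous estimate that Assumption~\ref{ass-strichartz} explicitly postulates, in the spirit of Keel--Tao.

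A second point to check is the regularity bookkeeping: the homogeneous estimate is stated with $D(H^{1/2})$ data while Step 1 needs the $L^2$ version. Because the propagator is unitary and commutes with $\langle H\rangle^{1/2}$, the $\langle H\rangle^{1/2}$ factors cancel between the data and the $H^{s,r}$ norm.
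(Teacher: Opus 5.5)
Your proposal is correct and follows the same route the paper indicates: reduce the homogeneous bound of Assumption~\ref{ass-strichartz} to the $L^2$ level by commuting $\langle H\rangle^{1/2}$ with the propagator, use the $TT^*$ argument for the untruncated operator, and apply Christ--Kiselev for the retarded one. Your endpoint worry never arises, because the requirement $r=p+1<\gamma_0=\frac{2d}{d-2s}$ in the assumption is equivalent to $p<1+\frac{4s}{d-2s}$, which is exactly the condition $q>2$.
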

Combining the Strichartz estimate and Picard iteration theorem, we have the following local well-posedness.
\begin{lemma}[Local well-posedness]\label{LWP}
	
	Let $\mu=\pm1$ and $1<p<1+\frac{4s}{d-2s}$ where $s$ is corresponding to the leading order of $H$, then there exists $T_{\operatorname{max}}=T_{\operatorname{max}}(\|u_0\|_{D(H^\frac{1}{2})})\in(0,\infty]$ and \eqref{eq:damped} has the unique local solution $u\in C(I,D(H^\frac{1}{2}))\cap L_t^{q}(I,H_x^{s,r})$ where $(q,r)$ is given by Assumption \ref{ass-strichartz} and $I=[0,T_{\operatorname{max}})$. If    $T_{\operatorname{max}}<\infty$, it holds $\lim\limits_{t\to T_{\operatorname{max}}}\|u(t)\|_{D(H^\frac{1}{2})}=\infty$.
\end{lemma}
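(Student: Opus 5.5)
The plan is to run a standard contraction argument for the Duhamel map
\[
\Phi(u)(t)=e^{-bt}e^{-itH}u_0-i\mu\int_0^t e^{-b(t-\tau)}e^{-i(t-\tau)H}\big(|u|^{p-1}u\big)(\tau)\,d\tau .
\]
First, I rewrite \eqref{eq:damped} as $i\partial_t u=Hu+\mu|u|^{p-1}u-ibu$. Since $\mathcal B=bI$ is a scalar multiple of the identity, it commutes with $H$, so the linear propagator is $e^{-bt}e^{-itH}$. On a finite interval $I=[0,T]$ the factor $e^{-bt}\in(0,1]$ is harmless: the homogeneous estimate of Assumption~\ref{ass-strichartz} applies directly to $e^{-bt}e^{-itH}u_0$. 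For the retarded part I write $e^{-b(t-\tau)}=e^{-bt}e^{b\tau}$, which gives $\int_0^t e^{-b(t-\tau)}e^{-i(t-\tau)H}F\,d\tau=e^{-bt}\int_0^t e^{-i(t-\tau)H}(e^{b\tau}F)\,d\tau$. For $T\le1$, say, the weight $e^{b\tau}\le e^{b}$, so the inhomogeneous Strichartz lemma yields
\[
\|\Phi(u)\|_{X_T}\lesssim \|u_0\|_{D(H^{1/2})}+\||u|^{p-1}u\|_{L^{q'}_tH^{s,r'}_x}.
\]
Here the time direction/sign convention of $e^{\pm itH}$ is irrelevant, since Strichartz estimates are symmetric in time. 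The space is $X_T=C([0,T];D(H^{1/2}))\cap L^q([0,T];H^{s,r}_x)$ with $(q,r)=\big(\tfrac{4s(p+1)}{d(p-1)},p+1\big)$. The $L^\infty_tD(H^{1/2})$ part of the norm comes from the energy estimate, obtained either from the $(\infty,2)$ pair or by duality/$TT^*$.

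Next comes the nonlinear estimate. By \eqref{eq:abstract-chain-rule} with $r=p+1$, I bound $\|\langle H\rangle^{1/2}(|u|^{p-1}u)\|_{L^{r'}}\lesssim\|u\|_{L^r}^{p-1}\|\langle H\rangle^{1/2}u\|_{L^r}$. The $L^2$-level piece is handled by the same Hölder argument without derivatives. Using Assumption~\ref{ass:Hamiltonian}, I control $\|u\|_{L^r}\lesssim\|u\|_{D(H^{1/2})}$, which is valid since $r=p+1<\gamma_0$ in the energy-subcritical range. Hölder in time then gives
\[
\||u|^{p-1}u\|_{L^{q'}_tH^{s,r'}_x}\lesssim T^{\theta}\|u\|_{L^\infty_tD(H^{1/2})}^{p-1}\|u\|_{L^q_tH^{s,r}_x},
\qquad \theta=1-\tfrac{2}{q}>0 .
\]
This uses the exponent identity $1/q'=(p-1)\cdot 0+1/q+\theta$. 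Positivity of $\theta$ holds exactly because $q>2$, which is equivalent to $p<1+\tfrac{4s}{d-2s}$ after checking the relation between $q$ and the subcriticality condition; this is the step I expect to be the main bookkeeping obstacle. The corresponding Lipschitz difference estimate, assumed alongside \eqref{eq:abstract-chain-rule}, gives the same bound for $\Phi(u)-\Phi(v)$.

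Finally, I close the fixed point. Choosing $T\sim\|u_0\|_{D(H^{1/2})}^{-(p-1)/\theta}$ makes $\Phi$ a contraction on a ball of radius $2C\|u_0\|_{D(H^{1/2})}$ in $X_T$, with the metric taken in $L^\infty_tL^2\cap L^q_tL^r$ if the differences require it. Uniqueness in the class $X_T$ and continuous dependence follow in the standard way. Because the existence time depends only on $\|u_0\|_{D(H^{1/2})}$, I iterate the construction to obtain the maximal time $T_{\max}$. If $T_{\max}<\infty$ while $\liminf_{t\to T_{\max}}\|u(t)\|_{D(H^{1/2})}<\infty$, then restarting from a time close to $T_{\max}$ with a uniform existence time would extend the solution, a contradiction. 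Hence $\|u(t)\|_{D(H^{1/2})}\to\infty$, which gives the blow-up alternative.
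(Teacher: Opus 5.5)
Your proposal is correct and takes the same route as the paper, which justifies this lemma only by the one-line remark that it follows from the Strichartz estimates of Assumption~\ref{ass-strichartz} combined with a Picard iteration. You fill in the omitted details correctly: $q=\frac{4s(p+1)}{d(p-1)}>2$ is indeed equivalent to $p<1+\frac{4s}{d-2s}$, which gives the time gain $T^{1-2/q}$; the scalar damping factor $e^{-b(t-\tau)}$ is harmless on intervals of length at most one; and running the contraction for differences in the weaker metric $L^\infty_tL^2\cap L^q_tL^r$ avoids needing a derivative-level Lipschitz bound when $p<2$.
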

Since $\mathcal{B}$ is not trivial, i.e. $\mathcal{B}\neq0$, \eqref{eq:damped} is not a Hamiltonian system. To control the energy, we shall prove the following energy decay estimate under some assumption. 
Define the energy on $D(H^{1/2})$ by
\begin{equation}\label{eq:energy}
	E(u)
	=
	\frac12\,q(u)
	-
	\frac{\mu}{p+1}\,\norm{u}_{L^{p+1}}^{p+1},
\end{equation}
where $q(u)=\|H^{1/2}u\|_{L^2}^2$. 
\begin{lemma}[Abstract energy dissipation identity]
	Assume Assumption \ref{ass:damp}. Let $u$ be a sufficiently regular solution to \eqref{eq:damped}. Then
	\begin{equation}\label{eq:energy-dissipation}
		\frac{d}{dt}E(u(t))
		=
		-\Ree\,\ip{Hu(t)-\mu |u(t)|^{p-1}u(t)}{\mathcal B u(t)}.
	\end{equation}
\end{lemma}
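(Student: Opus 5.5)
We differentiate $E(u(t))$ along a sufficiently regular solution, exactly as in the proof of Lemma~\ref{lem:mass}, and then substitute the equation. Writing $E(u)=\frac12\ip{Hu}{u}-\frac{\mu}{p+1}\int|u|^{p+1}\,dx$, the selfadjointness of $H$ and the identity $\frac{d}{dt}|u|^{p+1}=(p+1)|u|^{p-1}\Ree(\bar u\,\partial_tu)$ give
\[
\frac{d}{dt}E(u(t))=\Ree\,\ip{Hu-\mu|u|^{p-1}u}{\partial_tu}.
\]
The real part here is what lets us treat both halves of the energy symmetrically.

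Next we insert the equation \eqref{eq:damped} in the form $\partial_tu=-i(Hu+\mu|u|^{p-1}u)-\mathcal Bu$. This splits the derivative into two pieces:
\[
\Ree\,\ip{Hu-\mu|u|^{p-1}u}{-i(Hu+\mu|u|^{p-1}u)}
\quad\text{and}\quad
-\Ree\,\ip{Hu-\mu|u|^{p-1}u}{\mathcal Bu}.
\]
The second piece is precisely the right-hand side of \eqref{eq:energy-dissipation}, so it only remains to see that the first piece vanishes.

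For the first piece, write $X=Hu+\mu|u|^{p-1}u$, so that it equals $\Ree\ip{X}{-iX}=\Ree(i\|X\|^2)=0$. We expect this to be the main obstacle, because whether it closes depends on the sign conventions. With $N=|u|^{p-1}u$, the relevant pairing is $\ip{Hu-\mu N}{-i(Hu+\mu N)}$. Its diagonal terms $\ip{Hu}{-iHu}$ and $\ip{N}{-iN}$ are purely imaginary. The cross terms contribute $-\mu\,\Ree\,[\ip{Hu}{iN}+\ip{N}{iHu}]=0$, since $\ip{Hu}{iN}=-\overline{\ip{N}{iHu}}$ up to conjugation, so the real parts cancel. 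If the conjugate-linear slot makes the sign in the energy appear mismatched, we will recheck the convention, namely that the energy carries $-\mu/(p+1)$ while the equation carries $-\mu N$, so that the Hamiltonian part is $i\partial_tu=E'(u)$. Either way the conservative part drops out, and \eqref{eq:energy-dissipation} follows.

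Finally, for $u\in C(I,D(H^{1/2}))$ merely, we justify the computation by regularizing: we approximate the data in $D(H)$, use the local theory of Lemma~\ref{LWP} with persistence of regularity, write the identity in integrated form, and pass to the limit using the Strichartz bounds.
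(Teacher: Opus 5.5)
Your route is the same as the paper's: differentiate $E$ to get $\Ree\ip{Hu-\mu N}{\partial_tu}$ with $N=|u|^{p-1}u$, then substitute $\partial_tu=-i(Hu+\mu N)-\mathcal Bu$. But the step you flag as the main obstacle does not close, and your resolution of it contains a sign slip.

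The first slot is $Hu-\mu N$, not $X=Hu+\mu N$, so $\Ree\ip{X}{-iX}=0$ does not apply. Expanding, the cross terms are $\mu\ip{Hu}{-iN}-\mu\ip{N}{-iHu}=-\mu\ip{Hu}{iN}+\mu\ip{N}{iHu}$. This is a \emph{difference}, not a sum, because $N$ enters the two slots with opposite signs. By your own relation $\ip{Hu}{iN}=-\overline{\ip{N}{iHu}}$, it equals $2\mu\Ree\ip{N}{iHu}$. That quantity is real and, for $\ip{f}{g}=\int f\bar g$, equals $-2\mu\Im\ip{Hu}{N}$. So with the stated conventions
\[
\frac{d}{dt}E(u(t))=-\Ree\ip{Hu-\mu N}{\mathcal Bu}-2\mu\,\Im\ip{Hu}{N},
\]
and the extra term is generically nonzero. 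For $H=-\Delta$, $p=3$, $u=\rho e^{i\theta}$ it equals $-4\mu\int\rho^3\nabla\rho\cdot\nabla\theta\,dx$, which is nonzero for $\rho=e^{-|x|^2}$, $\theta=|x|^2$.

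Your fallback $i\partial_tu=E'(u)$ is also false. For $\mathcal B=0$ the equation gives $i\partial_tu=Hu+\mu N$, whereas $E'(u)=Hu-\mu N$. Put differently, $E=E_c-\frac{2\mu}{p+1}\|u\|_{L^{p+1}}^{p+1}$, where $E_c:=\frac12q(u)+\frac{\mu}{p+1}\|u\|_{L^{p+1}}^{p+1}$ is the Hamiltonian that actually generates the flow. Since $\|u\|_{L^{p+1}}$ is not conserved by the undamped equation, ``either way the conservative part drops out'' is not true. You needed to fix one convention and check the cancellation in it.

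The paper's proof makes the same unverified claim, namely that the term with $-i(Hu+\mu|u|^{p-1}u)$ has zero real part. It therefore carries the same slip: \eqref{eq:energy} and \eqref{eq:damped} are mismatched in sign. The repair is to align them. Take $E(u)=\frac12q(u)+\frac{\mu}{p+1}\|u\|_{L^{p+1}}^{p+1}$, the natural choice since $\mu=+1$ is declared defocusing. Then your first step gives $\frac{d}{dt}E=\Ree\ip{X}{\partial_tu}$ with $X=Hu+\mu N$ in both slots, the conservative piece is $\Ree\ip{X}{-iX}=0$ exactly as you wrote, and
\[
\frac{d}{dt}E(u(t))=-\Ree\ip{Hu+\mu|u|^{p-1}u}{\mathcal Bu}.
\]
Alternatively, keep \eqref{eq:energy} and change $-\mu|u|^{p-1}u$ to $+\mu|u|^{p-1}u$ in \eqref{eq:damped}. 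Then the stated right-hand side is correct, but $\mu=+1$ becomes focusing.
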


\begin{proof}
	As in standard NLS computations,
	\begin{gather*}
		\frac{d}{dt}\Big(\tfrac12 q(u(t))\Big)
		=
		\Ree\,\ip{Hu(t)}{\partial_t u(t)},\\
		\qquad
		\frac{d}{dt}\Big(\tfrac{\mu}{p+1}\|u(t)\|_{L^{p+1}}^{p+1}\Big)
		=
		\Ree\,\ip{\mu |u(t)|^{p-1}u(t)}{\partial_t u(t)}.
	\end{gather*}
	Thus
	\[
	\frac{d}{dt}E(u(t))
	=
	\Ree\,\ip{Hu(t)-\mu |u(t)|^{p-1}u(t)}{\partial_t u(t)}.
	\]
	Using $\partial_t u=-i(Hu+\mu |u|^{p-1}u)-\mathcal Bu$, the term involving
	$-i(Hu+\mu |u|^{p-1}u)$ has zero real part, hence we have \eqref{eq:energy-dissipation}.
\end{proof}
%It is easy to show that  the energy is coercive along the flow in the constant damping case, i.e.
%%\begin{assumption}[Coercive dissipation along the flow]\label{ass:coercive-diss}
%there exists $c_0>0$ such that for all $t\ge0$ along the considered solution,
%\begin{equation}\label{eq:coercive-diss}
%\Ree\,\ip{Hu(t)-\mu |u(t)|^{p-1}u(t)}{\mathcal B u(t)}
%\ \ge\ c_0\,E(u(t)).
%\end{equation}
%%\end{assumption}
%
%\begin{assumption}[Energy coercivity along the flow]\label{ass:energy-coercive}
%There exists a constant $c_E>0$ such that, on the considered solution class,
%the energy controls the energy norm in the sense that
%\begin{equation}\label{eq:energy-coercive}
%E(u(t)) \ \ge\ c_E\,\|u(t)\|_{D(H^{1/2})}^2
%\qquad \forall t\ge 0.
%\end{equation}
%\end{assumption}
%

\subsection{Proof of Theorem \ref{thm-abstract}}

For the defocusing case,  the energy decay holds automatically. Then combining with the local well-posedness, we have the  global well-posedness in the full sub-critical range $1<p<1+\frac{4}{d-2s}$. For convenience, we state the theorem below.

\begin{proposition}[Global well-posedness in the defocusing case]
		Let $\mu=1$, and let $u$ be the unique maximal solution given by Theorem \ref{LWP}.
		By the energy decay and the coercivity of the energy, the maximal solution satisfies
		\[
		\sup_{t\in[0,T_{\max})}\|u(t)\|_{D(H^{1/2})}<\infty.
		\]
		Hence
		\[
		T_{\max}=+\infty.
		\]
		In particular, the solution is global forward in time.
\end{proposition}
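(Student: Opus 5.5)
The plan is to combine the blow-up alternative of Lemma~\ref{LWP} with an a priori bound on $\|u(t)\|_{D(H^{1/2})}=\|u(t)\|_{\mathcal H}+\|H^{1/2}u(t)\|_{\mathcal H}$ that is uniform on $[0,T_{\max})$. Once this bound holds, $T_{\max}=+\infty$ follows, since otherwise the norm would have to blow up.

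\emph{Mass.} The $L^2$ part is handled directly by Lemma~\ref{lem:mass}. It gives $\|u(t)\|_{\mathcal H}\le e^{-at}\|u_0\|_{\mathcal H}\le\|u_0\|_{\mathcal H}$.

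\emph{Energy.} For the $H^{1/2}$ part I will use the energy dissipation identity \eqref{eq:energy-dissipation} with $\mathcal B=bI$ and $\mu=1$. It reads
\[
\frac{d}{dt}E(u(t))=-b\,\Ree\ip{Hu-|u|^{p-1}u}{u}=-b\,q(u)+b\|u\|_{L^{p+1}}^{p+1}.
\]
I will rewrite the right-hand side in terms of the energy. Since $E(u)=\tfrac12q(u)+\tfrac1{p+1}\|u\|_{L^{p+1}}^{p+1}$, we have
\[
\frac{d}{dt}E(u(t))=-2bE(u(t))+b\Big(1+\frac{2}{p+1}\Big)\|u\|_{L^{p+1}}^{p+1}.
\]
The nonlinear term is controlled with the Gagliardo--Nirenberg hypothesis \eqref{eq:GN-abstract} together with mass decay. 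Its exponent on $\|H^{1/2}u\|$ is $\theta=\tfrac{d}{2s}(p-1)<2$ in the energy-subcritical range, and $q(u)\le 2E(u)$. Hence
\[
\|u\|_{L^{p+1}}^{p+1}\le C\,E^{\theta/2}\,\|u_0\|_{\mathcal H}^{p+1-\theta}e^{-(p+1-\theta)at}.
\]
Young's inequality then gives
\[
\frac{d}{dt}E\le -bE+C\,e^{-\kappa t}
\]
for some $\kappa>0$, and Gr\"onwall yields $\sup_t E(u(t))\le E(u_0)+C$.

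\emph{Conclusion.} In the defocusing case the energy is coercive, $\tfrac12 q(u)\le E(u)$. Combining this with the mass bound bounds $\|u(t)\|_{D(H^{1/2})}$ uniformly in time, and the blow-up alternative gives $T_{\max}=\infty$.

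\emph{Regularity.} The identities are stated for sufficiently regular solutions. I will justify them for $D(H^{1/2})$ solutions by the usual regularization, namely smoothing the data with $(1+\varepsilon H)^{-1}$, using continuous dependence from Lemma~\ref{LWP}, and passing to the limit in the integrated form of the inequalities.

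\emph{Main obstacle.} The step I expect to need most care is precisely this justification. I also need to confirm that Gagliardo--Nirenberg is only needed for $\theta<2$, which holds throughout the subcritical range. If that failed at some endpoint, I would fall back on the cruder bound $\|u\|_{L^{p+1}}^{p+1}\lesssim E^{(p+1)/2}$ via the Sobolev embedding of Assumption~\ref{ass:Hamiltonian}, combined with a continuity argument.
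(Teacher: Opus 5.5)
There is a genuine gap in your energy step. Absorbing the nonlinear term into $-bE$ by Young's inequality requires $\theta=\frac{d}{2s}(p-1)<2$, i.e.\ $p<1+\frac{4s}{d}$. That is \emph{mass}-subcriticality, not energy-subcriticality, so your claim that $\theta<2$ ``throughout the subcritical range'' is false.

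For $1+\frac{4s}{d}<p<1+\frac{4s}{d-2s}$ your argument breaks down. The cubic case $d=3$, $s=1$, $p=3$ is an example, with $\theta=3$. There your inequality only gives $\frac{d}{dt}E\le -2bE+C\,E^{\theta/2}e^{-\kappa t}$ with a superlinear power. This does not rule out growth or blow-up of $E$ for large data. Your Sobolev fallback $\|u\|_{L^{p+1}}^{p+1}\lesssim E^{(p+1)/2}$ with a continuity argument has the same superlinearity and closes only for small data. As written, you prove the proposition only for $p\le 1+\frac{4s}{d}$ (at $\theta=2$ Gronwall still works). The maximal solutions in question, however, live in the whole energy-subcritical range.

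The missing observation is that in the defocusing case there is no bad term at all. The identity you use, $\frac{d}{dt}E=-b\,q(u)+b\|u\|_{L^{p+1}}^{p+1}$, is not the derivative of the energy $E=\frac12q(u)+\frac1{p+1}\|u\|_{L^{p+1}}^{p+1}$ that you then use for coercivity. The $-\mu$ in the quoted dissipation identity belongs to the opposite sign convention from \eqref{eq:damped}, where $\mu=1$ is defocusing.

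Set $Y=Hu+|u|^{p-1}u$, so that $\partial_tu=-iY-bu$. Differentiating your $E$ gives $\frac{d}{dt}E=\Ree\ip{Y}{\partial_tu}$. The Hamiltonian part $\Ree\ip{Y}{-iY}$ vanishes because $\ip{Y}{-iY}$ is purely imaginary. What remains is
\[
\frac{d}{dt}E(u(t))=-b\,q(u)-b\|u\|_{L^{p+1}}^{p+1}\le -2bE(u(t))\le0 .
\]
This is the paper's route (``the energy decay holds automatically''). It gives $\frac12q(u(t))\le E(u(t))\le E(u_0)$. Together with $\|u(t)\|_{\mathcal H}\le\|u_0\|_{\mathcal H}$, this bounds $\|u(t)\|_{D(H^{1/2})}$ uniformly. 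The blow-up alternative then gives $T_{\max}=\infty$ for every energy-subcritical $p$, with no Gagliardo--Nirenberg input.

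Your mass step, the coercivity conclusion and the regularization remarks can stay. Only the energy paragraph needs to be replaced by this monotonicity argument.
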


However, for the focusing case, since the sign of kinetic energy and potential energy is not same, we need a further restriction on the size of initial data.  To bound the potential energy in this case, we shall use the sharp Gagliardo-Nirenberg inequality. Following the strategy in Weinstein \cite{Weinstein}, there exists a threshold $m_c$ such that when $\|u_0\|_{L^2}<m_c$ where $m_c$ is associated with the sharp constant in the Gagliardo-Nirenberg inequality, the solution exists globally in the $L^2$-subcrtical case. 

With global well-posedness in hand, we now turn to prove the exponentially scattering in the sense of Definition \ref{def:exp-scatt}. We begin with a scattering criterion.
\begin{lemma}[Scattering criterion] Let $\mu=\pm1$ and $1<p<1+\frac{4s}{d-2s}$. Assume that there exists a constant $B>0$ such that $\|e^{\mathcal Bt}u\|_{D(H^\frac{1}{2})}<B$, then $u$ exponentially scatters in $D(H^\frac{1}{2})$.
	
\end{lemma}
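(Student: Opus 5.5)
The plan is to remove the damping by a gauge transformation and then run a standard Duhamel/Cauchy argument for the undamped profile. Since $\mathcal B=bI$ is a scalar, we set $v(t):=e^{bt}u(t)$. A direct computation from \eqref{eq:damped} gives
\[
i\partial_t v - Hv = \mu e^{bt}|u|^{p-1}u=\mu e^{-(p-1)bt}|v|^{p-1}v .
\]
So $v$ solves an undamped NLS whose nonlinearity carries the exponentially decaying coefficient $e^{-(p-1)bt}$. The hypothesis $\|e^{\mathcal Bt}u\|_{D(H^{1/2})}<B$ says exactly that $\sup_{t\ge0}\|v(t)\|_{D(H^{1/2})}\le B$.

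We define the candidate profile by Duhamel,
\[
u_+ := v(0) - i\mu\int_0^\infty e^{-isH}e^{-(p-1)bs}|v|^{p-1}v(s)\,ds .
\]
Here we use the sign convention matching $e^{itH}$ in Definition~\ref{def:exp-scatt}, adjusting $t\mapsto -t$ as needed. The point is that for $t>0$,
\[
e^{bt}u(t)-e^{itH}u_+ = i\mu\, e^{itH}\int_t^\infty e^{-isH}e^{-(p-1)bs}|v|^{p-1}v(s)\,ds ,
\]
possibly up to the sign of the time variable. It therefore suffices to show that the tail integral is $O(e^{-\gamma' t})$ in $D(H^{1/2})$ for some $\gamma'>0$. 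Multiplying by $e^{-bt}$ then gives $\|u(t)-e^{-bt}e^{itH}u_+\|_{D(H^{1/2})}\lesssim e^{-(b+\gamma')t}$. Hence Definition~\ref{def:exp-scatt} holds with any $\gamma<b+\gamma'$, in particular with some $\gamma>0$.

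To bound the tail we use the dual homogeneous Strichartz estimate from Assumption~\ref{ass-strichartz} (via $TT^*$ and the inhomogeneous lemma), together with \eqref{eq:abstract-chain-rule}:
\[
\Big\|\int_t^\infty e^{-isH}F\,ds\Big\|_{D(H^{1/2})}\lesssim \|F\|_{L^{q'}_s((t,\infty);H^{s,r'})},\qquad (q,r)=\Big(\tfrac{4s(p+1)}{d(p-1)},p+1\Big).
\]
With $F=e^{-(p-1)bs}|v|^{p-1}v$, the chain rule bounds the integrand by $e^{-(p-1)bs}\|v\|_{L^r}^{p-1}\|v\|_{H^{s,r}}$. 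This is where the main obstacle lies: we only know $v$ uniformly in $D(H^{1/2})$, not in $L^q H^{s,r}$ globally. A crude fix would be to use the Sobolev embedding $D(H^{1/2})\hookrightarrow L^{p+1}$ from Assumption~\ref{ass:Hamiltonian} to control $\|v\|_{L^r}$ uniformly, but $\|v\|_{H^{s,r}}$ is not controlled by the energy norm.

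Instead we proceed by time-slicing. On each unit interval $[n,n+1]$, the local theory (Lemma~\ref{LWP}) combined with the uniform bound $B$ gives $\|v\|_{L^q([n,n+1];H^{s,r})}\le C(B)$ with $C$ independent of $n$. The reason is that the local existence time depends only on the $D(H^{1/2})$ norm, and the coefficient $e^{-(p-1)bs}\le1$ only helps. Applying Hölder in time on each slice, using $q'<q$ (valid since $q>2$ in the subcritical range, or trivially when the exponents coincide), we get
\[
\|F\|_{L^{q'}([n,n+1];H^{s,r'})}\lesssim e^{-(p-1)bn}\,C(B)^{p}.
\]
We then sum over $n\ge \lfloor t\rfloor$ in $\ell^{q'}$ (or $\ell^1$), which gives a tail of size $\lesssim e^{-(p-1)bt}$. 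This yields $\gamma'=(p-1)b>0$. The same estimate applied between two times $t_1<t_2$ shows that the Duhamel integral defining $u_+$ converges, so $u_+\in D(H^{1/2})$. The only delicate points are checking that the slice-wise Strichartz bound is uniform in $n$, which follows by rerunning the contraction argument for the gauged equation on unit intervals (shrinking to a fixed short length $\tau(B)$ if needed and covering by $\lceil 1/\tau\rceil$ pieces), and keeping the time-direction sign convention consistent.
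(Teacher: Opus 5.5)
Your proof is correct, but it organizes the key step differently from the paper.

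\textbf{What the two arguments share.} Both gauge away the damping with $v=e^{bt}u$. Both combine Strichartz estimates and the chain rule with the decay $\|u(t)\|_{L^r}\lesssim Be^{-bt}$, which comes from the hypothesis and Sobolev embedding.

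\textbf{The paper's route.} It turns this decay into finiteness of $\|u\|_{L^{q_1}_tL^r_x([0,\infty))}$, where $1-\frac2q=\frac{p-1}{q_1}$. It then runs a single absorption argument on $(t_1,\infty)$. Duhamel gives $\|\langle H\rangle^{1/2}v\|_{L^q((t_1,t_2);L^r)}\lesssim\|v(t_1)\|_{D(H^{1/2})}+\|u\|^{p-1}_{L^{q_1}((t_1,\infty);L^r)}\|\langle H\rangle^{1/2}v\|_{L^q((t_1,t_2);L^r)}$. Smallness of the tail factor then yields a global-in-time Strichartz bound for $v$, and the Cauchy criterion follows from the same product estimate.

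\textbf{Your route.} You never prove a global Strichartz bound. You only need bounds on unit slices that are uniform in $n$: local theory with existence time depending on $B$ alone, plus uniqueness to identify the contraction solution with $v$. The factor $e^{-(p-1)bs}$ then makes the slice contributions geometrically summable. With the $\ell^1$ summation you need the dual Strichartz estimate only on unit intervals.

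\textbf{What each buys.} The paper's route is a single perturbative step. Yours trades the smallness argument on an infinite interval for uniformity of the local theory. In return it gives the explicit rate $\|v(t)-e^{\pm itH}u_+\|_{D(H^{1/2})}\lesssim e^{-(p-1)bt}$, which the paper leaves implicit (it can be read off from $\|u\|_{L^{q_1}((t,\infty);L^r)}\lesssim e^{-bt}$).

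That rate is where the real content lies. As Definition~\ref{def:exp-scatt} is written, the hypothesis alone already gives it for every $\gamma<b$, even with $u_+=0$. What matters is $\gamma\ge b$, and your estimate delivers every $\gamma<pb$.

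Your caveat about the sign of the time variable is also justified. The free flow of \eqref{eq:damped} is $e^{-itH}$, and the paper's own proof actually converges to $e^{-bt}e^{-itH}u_+$, not the $e^{itH}$ written in the definition.
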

\begin{proof}
	First, we show that $\|e^{\mathcal Bt}u\|_{D(H^\frac{1}{2})}<B$ implies the global-in-time $L_t^{q_1} L_x^r$ bound. Indeed, using the Sobolev embedding, mass-energy  decay (cf. \eqref{eq:mass-exp} and \eqref{eq:energy-exp-decay}), we obtain
	\begin{align*}
		\|u\|_{L_t^{q_1}L_x^r([0,\infty),\R^d)}\leq C\|e^{-at}\|e^{at}u\|_{D(H^\frac12)}\|_{L_t^{q_1}}\leq C\|e^{-at}\|_{L_t^{q_1}([0,\infty))}<\infty.
	\end{align*}
	Next, we show that the global-in-time bound  implies the weighted Strichartz bound.  Then for $0<t_1<t_2<\infty$, it follows from the Duhamel formula and H\"older's inequality,
	\begin{align*}
		&\hspace{3ex}\|\<H\>^\frac12e^{\mathcal Bt}u\|_{ L_t^q((t_1,t_2),L^r(\R^d))}\\&\lesssim\|\<H\>^\frac12e^{-itH}u(t_1)\|_{L_t^q((t_1,t_2),L_x^r)}+\Big\|\int_{t_1}^{t_2}e^{-i(t-s)H}\<H\>^\frac12\big(e^{\mathcal Bs}|u(s)|^{p-1}u(s)\big)\,ds\Big\|_{L_t^q((t_1,t_2),L_x^r)}\\
		&\lesssim\|u(t_1)\|_{D(H^\frac12)}+\|u\|_{L_t^{q_1}L_x^r}^{p-1}\|\<H\>^\frac12e^{\mathcal Bt}u\|_{L_t^qL_x^r},
	\end{align*}
	where $(q,r)=(\frac{4s(p+1)}{d(p-1)},p+1)$ and  $1-\frac{2}{q}=\frac{p-1}{q_1}$. Indeed, we have $q_1=\frac{(p-1)q}{q-2}>0$.
	Taking $t_1$ sufficiently large, we have $\|u\|_{L_t^qL_x^r((t_1,t_2)\times\R^d)}\ll1$, which yield the following estimate by bootstrap argument
	\begin{align*}
		\|\<H\>^\frac12e^{\mathcal Bt}u\|_{L_t^{q_1} L_x^r}\leq C\|u(t_1)\|_{D(H^\frac{1}{2})}.
	\end{align*}
	For the fixed $t_1>0$, we can take $t_2\to\infty$, which yield the estimate in the interval $t\in(t_1,\infty)$. This is acceptable. Now, we turn to prove that $u$ exponentially scatters. By definition, we only need to prove that the scattering state $u_+\in D(H^\frac12)$. Taking $0<t<\tau$, by fractional Leibniz's rule and Duhamel's formula, we deduce 
	\begin{align*}
		&\quad\Big \|e^{\mathcal Bt}e^{itH}u(t)-e^{\mathcal B\tau}e^{i\tau H}u(\tau)\Big\|_{D(H^\frac{1}{2})}\\
		&\lesssim\Big\|\int_t^\tau e^{isH}\big(e^{\mathcal Bs}|u(s)|^{p-1}u(s)\big)\,ds\Big\|_{D(H^\frac{1}{2})}\\
		&\lesssim\|u\|_{L_t^{q_1}((t,\tau),L^r)}^{p-1}\|\<H\>^\frac12e^{\mathcal Bt}u\|_{L_t^q((t,\tau),L^r)}.
	\end{align*}  
	Since $D(H^\frac{1}{2})$ is a Hilbert space and then $e^{\mathcal Bt}e^{itH}u$ converges to $u_+$ in $D(H^\frac12)$.
\end{proof}
Next, we begin the  proof of Theorem \ref{thm-abstract}.
\begin{proof}[Proof of Theorem \ref{thm-abstract}]
	In the following, we only need to find a constant $B>0$ such that $\|e^{\mathcal Bt}H^\frac12u\|_{L^2}<B$ for examples presented in the introduction. In our setting, there exists $a>0$ such that $\mathcal{B}=bI$ with $b\geq a$. First, we define the functional $H(t)$ by
	\begin{align*}
		H(u(t)) &:= \frac{1}{2} e^{2\mathcal Bt} q(u) - \mu \frac{1}{p+1} e^{2\mathcal Bt} \left\| u(t) \right\|_{L^{p+1}}^{p+1} \\
		&\quad -\mu \frac{p-1}{p+1} \int_0^t e^{2\mathcal Bs} \left\| u(s) \right\|_{L^{p+1}}^{p+1} ds \\
		&= e^{2\mathcal Bt} E(u(t))- \mu \frac{p-1}{p+1} \int_0^t e^{2\mathcal Bs} \left\| u(s) \right\|_{L^{p+1}}^{p+1} ds.
	\end{align*}
	By the direct calculation, we can obtain $\frac{d}{dt}H(t)=0$. When $\mu=+1$ (i.e. defocusing), we have
	\begin{align*}
		e^{\mathcal Bt}q(u)^\frac12\leq \sqrt{2H(t)}<\infty,
	\end{align*}
	which completes the proof of Theorem \ref{thm-abstract} in the defocusing case. For the focusing case, we will use the  the Gagliardo-Nirenberg inequality. For simplicity, we only consider the case $s=1$ while other cases can be calculated similarly. In the following, we obtain the lower bound of $H(t)$
	\begin{equation}
		\begin{aligned}
			H(u(t)) &\geq \frac{1}{2} e^{2\mathcal Bt}q(u) 
			- \frac{C}{p+1} e^{2\mathcal Bt} \left\| u(t) \right\|_{L^2}^{p+1-\frac{d}{2s}(p-1)} 
			q(u)^{\frac{d(p-1)}{4s}} \\
			&\quad - C \frac{p-1}{p+1} \int_0^t e^{2\mathcal B\tau} \left\| u(s) \right\|_{L^2}^{p+1-\frac{d}{2s}(p-1)} 
			q(u)^{\frac{d(p-1)}{4s}}\, d\tau \\
			&\geq \frac{1}{2} \left(e^{\mathcal Bt}q(u)^\frac12\right)^2 
			- \frac{C}{p+1} e^{-b(p-1)\mathcal Bt} \left\| u_0 \right\|_{L^2}^{p+1-\frac{d}{2s}(p-1)} 
			\left(e^{\mathcal Bt}q(u)^\frac12\right)^{\frac{d(p-1)}{2s}} \\
			&\quad - C \frac{p-1}{p+1} \left\| u_0 \right\|_{L^2}^{p+1-\frac{d}{2s}(p-1)} 
			\int_0^t e^{-b(p-1)\mathcal B\tau} \left(e^{\mathcal B\tau} q(u)^\frac12\right)^{\frac{d(p-1)}{2}} ds.\label{eq:H}
		\end{aligned}
	\end{equation}
	In the last inequality, we need the following relationship on parameter $b$:
	\begin{align*}
		-(p-1)b+\frac{d}{2s}(p-1)>2,
	\end{align*}
	which is equivalent to
	\begin{align*}
		\frac{d}{2s}>b>\frac{d}{2s}-\frac{2}{p-1}>0.
	\end{align*}
	Since we are in the $L^2$-subcritical case, we can take $b=1$.
	
	For convenience, we let $f(t)=e^{\mathcal Bt}q(u)^\frac{1}{2}$. Then we rewrite
	\begin{gather*}
		H(u_0) \geq \frac{1}{2} f(t)^2 - \frac{2}{p+1} e^{-(p-1)\mathcal Bt} \| u_0 \|_{L^2}^{p+1-\frac{d}{2s}(p-1)} f\left(t\right)^{\frac{d(p-1)}{2s}} \notag \\
		- 2\frac{p-1}{p+1} \| u_0 \|_{L^2}^{p+1-\frac{d}{2s}(p-1)} \int_0^t e^{-(p-1)\mathcal B\tau} f(\tau)^{\frac{d(p-1)}{2s}} \,d\tau.
	\end{gather*}
	Then by the lower bound of damping operator and  Young's inequality yields 
	\begin{gather*}
		\frac{2}{p+1} e^{-(p-1)\mathcal Bt} \|u_0\|_{L^2}^{p+1-\frac{d}{2s}(p-1)} f(t)^{\frac{d(p-1)}{2s}} \notag \\
		\leq C(\|u_0\|_{L^2}) e^{-\frac{p-1}{2}at} \cdot e^{-\frac{p-1}{2}at} f(t)^{\frac{d(p-1)}{2s}} \notag \\
		\leq \frac{1}{4} e^{-\frac{2s}{d}at} f(t)^2 + C(\|u_0\|_{L^2}) e^{-\frac{2s(p-1)}{4s-d(p-1)} at}.
	\end{gather*}
	Substituting the above inequality in the lower bound, we get
	\begin{align}
		H(u_0) &\geq \left( \frac{1}{2} - \frac{1}{4}e^{-\frac{2s}{d}\mathcal Bt} \right)f(t)^2 - C(\|u_0\|_{L^2})e^{-\frac{2s(p-1)}{4s-d(p-1)}at} \nonumber \\
		&\quad -(p-1)\left\{ \int_0^t \frac{1}{4}e^{-\frac{2s}{d}a \tau}f(\tau)^2\,d\tau + C(\|u_0\|_{L^2})\int_0^t e^{-\frac{2s(p-1)}{4s-d(p-1)}a\tau}\,d\tau \right\} \nonumber \\
		&\geq \frac{1}{4}f(t)^2 - C(\|u_0\|_{L^2}) - \int_0^t \frac{p-1}{4}e^{-\frac{2s}{d}a\tau}f(\tau)^2\,d\tau.
	\end{align}
	since $p<1+\frac{4s}{d}$, we have \( e^{-\frac{2s(p-1)}{4s-d(p-1)}} \leq 1 \) and \( \int_0^t e^{-\frac{2s(p-1)}{4s-d(p-1)}a\tau}\,d\tau < C \), this implies that
	\begin{equation}
		\frac{1}{4}f(t)^2 \leq H(u_0) + C(\|u_0\|_{L^2}) + \int_0^t \frac{p-1}{4}e^{-\frac{2s}{d}a\tau}f(\tau)^2\,d\tau.
	\end{equation}
	Applying the Gronwall inequality, we get
	\begin{equation}
		f(t)^2 \leq 4(H(u_0) + C(\|u_0\|_{L^2})) \exp\left((p-1)\int_0^t e^{-\frac{2s}{d}a\tau}\,d\tau\right) \lesssim H(u_0) + C(\|u_0\|_{L^2}).
	\end{equation}
	This finishes the proof of Theorem \ref{thm-abstract}. As a consequence, we also prove the global existence for the $L^2$-subcritical case with focusing nonlinearity. 

Now, we consider the mass-critical focusing case
\[
\mu=-1,
\qquad
p=1+\frac{4s}{d}.
\]
Set
\[
Y(t):=e^{2b t}q(u(t)),
\qquad
\Theta
:=
\frac{2C_{\mathrm{GN}}}{p+1}
\|u_0\|_{L^2}^{p-1}.
\]
By the mass dissipation identity,
\[
\|u(t)\|_{L^2}
=
e^{-bt}\|u_0\|_{L^2}.
\]
Since
\[
\|u(t)\|_{L^{p+1}}^{p+1}
\le
C_{\mathrm{GN}}
q(u(t))
\|u(t)\|_{L^2}^{p-1},
\]
we obtain
\[
e^{2bt}\|u(t)\|_{L^{p+1}}^{p+1}
\le
C_{\mathrm{GN}}
\|u_0\|_{L^2}^{p-1}
e^{-b(p-1)t}Y(t).
\]

Using the weighted energy identity, we have
\begin{equation}
\label{equ:mass-critical-lower-bound}
E_-(u_0)
\ge
\frac{1-\Theta}{2}Y(t)
-
\frac{b(p-1)\Theta}{2}
\int_0^t
e^{-b(p-1)\tau}Y(\tau)\,d\tau.
\tag{B.13}
\end{equation}
Consequently, provided that $\Theta<1$,
\begin{equation}
\label{equ:mass-critical-gronwall}
Y(t)
\le
\frac{2E_-(u_0)}{1-\Theta}
+
\frac{b(p-1)\Theta}{1-\Theta}
\int_0^t
e^{-b(p-1)\tau}Y(\tau)\,d\tau.
\tag{B.14}
\end{equation}

Applying Gronwall's inequality, we obtain
\[
\begin{aligned}
Y(t)
&\le
\frac{2E_-(u_0)}{1-\Theta}
\exp\left(
\frac{b(p-1)\Theta}{1-\Theta}
\int_0^t e^{-b(p-1)\tau}\,d\tau
\right)
\\
&=
\frac{2E_-(u_0)}{1-\Theta}
\exp\left[
\frac{\Theta}{1-\Theta}
\left(1-e^{-b(p-1)t}\right)
\right]
\\
&\le
\frac{2E_-(u_0)}{1-\Theta}
\exp\left(\frac{\Theta}{1-\Theta}\right).
\end{aligned}
\]
Therefore,
\begin{equation}
\label{equ:mass-critical-decay}
q(u(t))
\le
\frac{2E_-(u_0)}{1-\Theta}
\exp\left(\frac{\Theta}{1-\Theta}\right)e^{-2bt}.
\tag{B.15}
\end{equation}
Since $b\ge a>0$, it follows that
\[
q(u(t))
\le
\frac{2E_-(u_0)}{1-\Theta}
\exp\left(\frac{\Theta}{1-\Theta}\right)e^{-2at}.
\]
Hence
\[
\sup_{t\ge0}e^{at}q(u(t))^{1/2}<\infty.
\]

\end{proof}

\begin{remark}
	In the above, we have proved $f(t)$ is finite. Together with the mass dissipation estimate (cf. Lemma \ref{lem:mass}) and the blow-up alternative, $u$ is global.
\end{remark}

\section*{Acknowledgements}
L. Fanelli and Y. Wang are partially supported by the Basque Government through the BERC 2022--2025 program and by the research project PID2024-155550NB-100 funded by MICIU/AEI/10.13039/501100011033 and FEDER/EU.\,\,
L. Fanelli is also supported by the project IT1875-26 funded by the Basque Government.\,\,
Y. Wang is also supported by a Juan de la Cierva fellowship funded by MICIU/AEI/10.13039/501100011033, under Grant JDC2024-053285-I.\,\,
J. Zheng was supported by  National key R\&D program of China: 2021YFA10\\02500 and  NSFC Grant 12271051.\,\,H. Mizutani is partially supported by JSPS KAKENHI Grant Numbers JP21K03325, JP24K00529, JP26K22272 and JP26K00612.


\begin{thebibliography}{99}
	
	
	\bibitem{Bouclet}J.-M. Bouclet and N. Tzvetkov, \emph{Strichartz estimates for long range perturbations,} Amer. J. Math. 129  (2007), 1565-1609.
	
	\bibitem{Burq-Bouclet} J.-M. Bouclet, and N. Burq, \emph{Sharp resolvent and time-decay estimates for dispersive equations
		on asymptotically Euclidean backgrounds}. Duke Math. J. 170 (2021), 2575-2629.
	
	
	
	\bibitem{Fanelli2} N. Boussaid, P. D'Ancona and L. Fanelli, Virial identity and weak dispersion for the magnetic Dirac equation.
	J. Math. Pures Appl. (9) (2011), No. 2, 137-150.
	
	\bibitem{BGT} N. Burq, P. G\'erard, and N. Tzvetkov, Strichartz inequalities and the nonlinear Schr\"odinger equation on compact manifolds. Amer. J. Math., 126(3):569-605, 2004.
	
	
	\bibitem{Cacciafesta} F. Cacciafesta, Smoothing estimates for the variable coefficients Schr\"odinger equation with electromagnetic potentials. J. Math. Anal. Appl. 402 (2013), 286-296.
	
	\bibitem{CD16} B. Cassano, P. D'Ancona, Scattering in the energy space for the NLS with variable coefficients, Math. Ann., 366(2016), 479-543.
	
	\bibitem{Caz} T. Cazenave, Semilinear Schr\"odinger equations, volume 10 of Courant Lecture Notes in Mathematics.
	New York University, Courant Institute of Mathematical Sciences, New York; American Mathematical Society, Providence, RI, 2003.
	
	\bibitem{CCL} J. Colliander, M. Czubak and J. Lee, Interaction Morawetz estimate for the magnetic Schr\"odinger
	equation and applications. Adv. Differ. Equ. 19(2014), 805-832.
	
	
	
	\bibitem{CKSTT04} J. Colliander, M. Keel, G. Staffilani, H. Takaoka, and T. Tao. Global existence and scattering for rough solutions of a nonlinear Schr\"odinger equation on R3. Comm. Pure Appl. Math., 57(2004), 987-1014.
	
	\bibitem{Fanelli-Vega} L. Fanelli and L. Vega, Magnetic virial identities, weak dispersion and Strichartz inequalities, Math. Ann. 344 (2009), 249-278.
	
	\bibitem{DFVV-endpoint} P. D'Ancona, L. Fanelli, L. Vega and N. Visciglia, Endpoint Strichartz estimates for the magnetic Schr\"odinger equation. J. Funct. Anal. 258 (2010), 3227-3240.
	
	\bibitem{DiSj} M. Dimassi, J. Sj\"ostrand, Spectral asymptotics in semiclassical limit, London Mathematical Society, Lecture Notes Series, 268, Cambridge University Press, 1999.
	
	\bibitem{Fib01} G. Fibich, Self-focusing in the damped nonlinear Schr\"odinger equation, SIAM J. Appl. Math. 61 (2001),  1680-1705.
	
	\bibitem{GOV1994} J. Ginibre, T. Ozawa, G. Velo, {\it On the existence of the wave operators for a class of nonlinear Schr\"odinger equations}, Annales de l'I. H. P., Physique th\'eorique  \textbf{60} (1994), 211--239.
	
	\bibitem{GV85} J. Ginibre and G. Velo. Scattering theory in the energy space for a class of nonlinear Schr\"odinger equations. J. Math. Pures Appl. (9), 64(4):363-401, 1985.
	
	
	\bibitem{GRH80} M. V. Goldman, K. Rypdal, and B. Hafizi, Dimensionality and dissipation in Langmuir collapse, Phys. Fluids, 23 (1980), 945-955.
	
	\bibitem{Guo-Zhu}	
	Q. Guo and S. Zhu, Sharp criteria of scattering for the fractional NLS, Preprint, arixv: 1706. 02549.
	\bibitem{Saanouni}
	M. Hamouda, M. Majdoub and T. Saanouni, Damping effects on global existence and scattering for an inhomogeneous NLS equation with inverse-square potential, Z. Angew. Math. Phys. 76, 252 (2025).
	
\bibitem{Hassell-Zhang}
A. Hassell and  J. Zhang, 
Global-in-time Strichartz estimates on nontrapping, asymptotically conic manifolds.
Anal. PDE 9 (2016), no. 1, 151-192.

	\bibitem{Hormander} L. H\"ormander, The Analysis of Linear Partial Differential Operators III, Springer, Berlin, Heidelberg, New York, Tokyo, 1985.
	
	\bibitem{Inui} T. Inui. Asymptotic behavior of the nonlinear damped Schr\"odinger equation. Proc. Amer. Math. Soc., 147(2):763-773, 2019.
	
	\bibitem{KatoYajima} 
	T. Kato, K. Yajima, {\it Some examples of smooth operators and the associated smoothing effect}, Rev. Math. Phys. \textbf{1}(04) (1989), 481--496.
	
	\bibitem{KeelTao} M. Keel, T. Tao, {\it Endpoint Strichartz estimates}, Amer. J. Math. \textbf{120} (1998), 955--980. 
	
	\bibitem{KPV} C. E. Kenig, G. Ponce, L. Vega, {\it Oscillatory integrals and regularity of dispersive equations}. Indiana Univ. Math. J. \textbf{40} (1991), no. 1, 33--69.
	
	
	
	\bibitem{LS25} D. Lafontaine and B. Shakarov, Scattering for defocusing cubic NLS under locally damped strong trapping, Annales Henri Lebesgue, 9 (2026), 147-184.
	
	\bibitem{LS26} D. Lafontaine and B. Shakarov, Scattering for defocusing NLS with inhomogeneous nonlinear damping and nonlinear trapping potential, arxiv: 2603. 11765.
	
	\bibitem{Leinfelder}
	H. Leinfelder, C. Simader, Schr\"odinger operators with singular magnetic vector potentials, Math. Z. 176 (1981), 1-19.		

\bibitem{MMT-JFA}
J. Marzuola, J. Metcalfe and D. Tataru, Strichartz estimates and local smoothing estimates for asymptotically flat Schr\"odinger equations.
J. Funct. Anal. 255 (2008), no. 6, 1497-1553.
	
	\bibitem{Miz13} H. Mizutani, Strichartz estimates for Schr\"odinger equations with variable coefficients and unbounded potentials,  Analysis and PDE, 6(2013), 1857-1898.
	
	\bibitem{Staffilani-Tataru}
	G. Staffilani and D. Tataru, Strichartz estimates for a Schr\"odinger operator with nonsmooth coefficients, Comm. Partial Differential Equations {\bf 27} (2002), no.~7-8, 1337--1372.
	
	\bibitem{Stein} E. M. Stein, \emph{Harmonic analysis: real-variable methods, orthogonality, and oscillatory integrals}. Princeton Mathematical Series, vol.~43, Princeton University Press, Princeton, NJ, 1993, With the assistance of Timothy S. Murphy, Monographs in Harmonic Analysis, III.
	
	
	\bibitem{Tsu84} M. Tsutsumi, Nonexistence of global solutions to the Cauchy problem for the damped nonlinear Schr\"odinger equations, SIAM J. Math. Anal. 15 (1984), no. 2, 357-366.
	
	\bibitem{Weinstein}
	M.~I. Weinstein, Nonlinear Schr\"odinger equations and sharp interpolation estimates, Comm. Math. Phys. {\bf 87} (1982/83), no.~4, 567-576.
	
	\bibitem{YZ} K. Yajima, G. Zhang, Local smoothing property and Strichartz inequality for Schr\"odinger equations with potentials superquadratic at infinity, J. Differential Equations. \textbf{202} (2004), 81--110. 
	
	\bibitem{YNC} F. Yang, Z.-H. Ning, and L. Chen, Exponential stability of the nonlinear Schr\"odinger equation with locally distributed damping on compact riemannian manifold. Advances in Nonlinear Analysis, \textbf{10} (2021), 569--583
	583, 2021.
	
	\bibitem{Yudovich} V. I. Yudovich, Non-stationary flow of an ideal incompressible liquid. U.S.S.R. Comput.  Math. Phys., \textbf{3} (1967), 1407--1456.
	
\end{thebibliography}
\end{document}